\documentclass{amsart}
\usepackage[top=0.75in,left=0.75in,right=1in]{geometry}
\usepackage{amsmath,amsthm,amssymb, float}
\usepackage{tabularray}
\usepackage{colortbl}
\usepackage{enumerate}
\usepackage{comment, hyperref}
\usepackage{todonotes}
\usepackage{tabularray}
\usepackage{tikz,pgffor}
\usepackage{arydshln, graphicx}
\usepackage[table,x11names]{xcolor}
\usetikzlibrary{positioning,calc, arrows.meta}
\usetikzlibrary{decorations.pathreplacing}
\usetikzlibrary{decorations.pathreplacing,calligraphy}

\newtheorem{thm}{Theorem}[section]
\newtheorem{lem}[thm]{Lemma}
\newtheorem{prop}[thm]{Proposition}
\newtheorem{cor}[thm]{Corollary}
\newtheorem*{cor*}{Corollary}
\newtheorem{conj}[thm]{Conjecture}
\newtheorem*{claim}{Claim}
\newtheorem{question}{Question}
\newtheorem*{question*}{Question}

\theoremstyle{definition}
\newtheorem{defi}[thm]{Definition}
\newtheorem*{rem}{Remark}

\newtheorem{example}[thm]{Example}

\DeclareMathOperator{\mex}{mex}
\DeclareMathOperator{\dom}{dom}

\title{Two Dimensional Subtraction - Transfer Games}

\author[A. Danai]{Alon Danai}
\address{Department of Mathematics, Rutgers University,
110 Frelinghuysen Road,
Piscataway, NJ,
08854,
USA}
\email{}
\author[P. Ellis]{Paul Ellis}
\address{Department of Mathematics, Rutgers University,
110 Frelinghuysen Road,
Piscataway, NJ,
08854,
USA}
\email{paulellis@paulellis.org}
\author[T. A. Thanatipanonda]{Thotsaporn Aek Thanatipanonda}
\address{Science Division, Mahidol University International College,
999 Phutthamonthon Sai 4 Rd, Salaya, Phutthamonthon District,
Nakhon Pathom,
73170,Thailand}
\email{thotsaporn@gmail.com}

\begin{document}
\maketitle
\begin{abstract}

We generalize the results and conjectures in \cite{Lengyel}, showing that the \textsc{nim}-values of a large class of two-dimensional subtraction-transfer games are periodic.  These are impartial, normal-play games with two piles of tokens, where players alternate either taking some tokens from a pile or transferring tokens from one pile to the other.
In many cases, we calculate the exact period.  We also develop several new notions of periodicitiy.
  
\end{abstract}

\section{Introduction}

In \cite{Lengyel}, Tam\'{a}s Lengyel investigated variations of a normal-play, impartial,  two-pile subtraction transfer game.  All of his examples involved two piles of tokens, where a player was allowed to either subtract a particular number from the first pile, subtract another particular number from the second pile, or move a single token from the first pile to the second pile.  He then proved that several games of this type are \emph{periodic}.  That is, there is a \emph{period} $(p,q)\in\mathbb{N}^2$ so that, for any $(x,y)\in\mathbb{N}^2$, $\mathcal{SG}(x,y)=\mathcal{SG}(x+p,y)=\mathcal{SG}(x,y+q)$, where $x$ and $y$ denote the number of tokens in the first and second pile, respectively.  In some sense, this paper merely presents a generalization of his results.

However, we also challenge a method in \cite{Larsson}.  In this paper, Larsson, Saha, and Yokoo investigated one-, two-, and three-move vector subtraction games.  In doing so, they focus only on the outcome class of game positions, ignoring \textsc{nim}-values.  
On one hand, studying only the outcome classes gives enough information for their purposes, but it is also a common point of view to consider, for a given impartial game, the calculation of outcome classes to be easier than a full calculation of \textsc{nim}-values.
It turns out that this point of view would not have allowed us to generalize Lengyel's results, even though it `should' be easier to consider only outcome classes.  By considering outcome classes, one essentially partitions all \textsc{nim}-values as:
\[0 \mid 1 2 3 4 \ldots,\]
whereas in the study of what we are calling \emph{Lengyel transfer games}, it is more fruitful to mentally partition  \textsc{nim}-values according to $a\sim a\oplus 1$, that is, as:
\[0 1 \mid  2 3 \mid  4 5 \mid 6 7 \mid 89 \ldots\]
Furthermore, we end up discovering some new notions of periodicity.






\begin{defi}
    A \emph{vector game} is an impartial, normal-play, two player game played on $\mathbb{N}^n$ for some $n$.  It is determined by a set of vectors $S=\{u_i\}_{i\in I}\subseteq \mathbb{Z}^n$.  A legal move consists of adding an element of $S$ to the current game position.  If each $u_i$ is an element of $\left(\mathbb{Z}^{\leq 0}\right)^n$, we call it a \emph{vector subtraction game}.
\end{defi}

To ensure the game is not loopy, we further require that adding an element of $S$ reduces some monovariant.  In the case of vector subtraction games,  this could be the sum of the components.  In all of our examples, it will be the lexicographic order on  $\mathbb{N}^2$, that is, $(x,y)\leq (x',y')$ if either $x<x'$ or both $x=x'$ and $y\leq y'$.


\begin{defi}
 Define the \emph{Lengyel transfer game} $L(b;x_1,y_1;x_2,y_2)$ as the vector game defined by \newline
 $\{(0,-b), (-x_1,y_1), (-x_2,y_2)\}$, where $b,x_1,x_2\in\mathbb{N}^+$ and $y_1,y_2\in\mathbb{N}$.  
\end{defi}

Lengyel proved the following.

\begin{thm}[\cite{Lengyel}]\;

    \begin{enumerate}[(a)]
        \item $L(1;1,0;1, 1)$ has period $(2,2)$.
        \item If $b$ is even, $L(b;b,0; 1, 1)$ has period $(2b,2b)$.  In fact, $(x,y)$ is a  $\mathcal{P}$-position precisely when $\lfloor\frac{x+y}{b}\rfloor+x$ is even, which solves the olympiad problem which motivated \cite{Lengyel}.
        \item If $x_1\geq 2$, the period of $L(1;x_1,0; 1, 1)$ is $(2(x_1+1),2)$.
    \end{enumerate}
    \label{thm:lengyel}
\end{thm}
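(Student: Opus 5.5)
We would prove each part by writing down an explicit closed formula for $\mathcal{SG}(x,y)$, and then checking by induction on the lexicographic order that the formula satisfies the mex recursion. Periodicity then follows immediately from the formula. Following the philosophy of the introduction, we would write the formula as $\mathcal{SG}(x,y)=2h(x,y)+\ell(x,y)$ with $\ell\in\{0,1\}$. That is, we track the value up to the equivalence $a\sim a\oplus 1$ together with a parity bit. The induction step then amounts to checking two things. First, no option of $(x,y)$ has the same value. Second, every value $w<\mathcal{SG}(x,y)$ is attained by some legal option, which requires the relevant coordinates to be large enough for the move to exist.

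For (a), the moves from $(x,y)$ are to $(x,y-1)$, $(x-1,y)$ and $(x-1,y+1)$. Small cases suggest
\[\mathcal{SG}(x,y)=2(x\bmod 2)+(y\bmod 2).\]
With $v=\mathcal{SG}(x,y)$, the three moves produce exactly $v\oplus1$, $v\oplus2$ and $v\oplus3$ whenever they are legal, so no option has value $v$. For the mex condition:
\begin{itemize}
\item If $v=3$, then $x$ and $y$ are odd, hence both at least $1$, so all three moves exist.
\item If $v=2$, then $x\ge1$, and the two $x$-moves give $0$ and $1$.
\item If $v=1$, then $y\ge1$, and the $y$-move gives $0$.
\end{itemize}
This yields period $(2,2)$. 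Minimality of the period is clear from the formula.

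For (c), we would first compute a table of $L(1;x_1,0;1,1)$ for a few small $x_1$ and read off the formula. Since both $(0,-1)$ and the transfer flip the parity of $y$, we expect $\ell=y\bmod 2$. We then expect $h$ to depend only on $x\bmod 2(x_1+1)$, governed by a one-dimensional recursion in $x$ using steps $1$ and $x_1$. The case $x_1=1$ is exceptional, since the two $x$-decreasing moves then interact as in (a); this is why the hypothesis $x_1\ge2$ appears. We would verify the mex condition exactly as in (a), by case analysis on the residue of $x$ modulo $2(x_1+1)$. Minimality of the period $(2(x_1+1),2)$ would be read off from one explicit row of the formula.

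For (b), with $b$ even, the relevant quantity is $\lfloor (x+y)/b\rfloor + x$. The transfer preserves $x+y$ and changes $x$ by $1$. The move $(-b,0)$ and the move $(0,-b)$ each drop $\lfloor (x+y)/b\rfloor$ by exactly one, and $(-b,0)$ also preserves the parity of $x$ since $b$ is even. Hence every move changes the parity of $\lfloor (x+y)/b\rfloor + x$. This gives the $\mathcal{P}$-position characterization, provided we also show that from every $\mathcal{N}$-position some legal move exists. That is easy: if $x\ge1$, transfer; otherwise $y\ge b$ is forced, and we subtract $b$ from $y$. For the full claim about \textsc{nim}-values, we would guess $h$ as a function of $\lfloor (x+y)/b\rfloor$ and $x$ modulo small numbers, set $\ell=$ the parity above, and run the same mex verification. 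Periodicity $(2b,2b)$ would then follow since the formula depends only on $x \bmod 2b$ and $(x+y)\bmod 2b$.

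The main obstacle is finding the correct closed forms for $h$ in (b) and (c) and handling the boundary region. There, small $x$ or small $y$ (e.g.\ $y<b$ or $x<x_1$) removes moves, and the values near the axes may deviate from the bulk pattern. We would try first to find a formula valid everywhere with the boundary absorbed into the floor and mod expressions. Failing that, we would prove eventual periodicity with a separately computed finite preperiod.
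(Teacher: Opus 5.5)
Your part (a) is a complete proof. The formula $\mathcal{SG}(x,y)=2(x\bmod 2)+(y\bmod 2)$ and your mex check are more explicit than the paper, which simply exhibits Figure~1.

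Your part (b) is also essentially complete, and you undersell it. Every move flips the parity of $Q(x,y)=\lfloor\frac{x+y}{b}\rfloor+x$. So by induction all options of an even-$Q$ position have value $1$, and all options of an odd-$Q$ position (there is at least one) have value $0$. Hence $\mathcal{SG}(x,y)=Q(x,y)\bmod 2$ exactly. There is no $h$ to guess, and no $2$s or $3$s occur. Period $(2b,2b)$ follows at once. Minimality holds because a horizontal period must be a multiple $kb$ of $b$ (otherwise the floor jump varies with $x+y$), and then $k(b+1)$ must be even. This parity invariant is a more direct route than the paper's. The paper gets (b) by showing the $(x_1+1)$th column has no $2$s, using diagonal periodicity to propagate this, and then reducing to $L(b;1,1)$.

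The genuine gap is (c). It is only a plan, and the one concrete prediction in it is false. The parity bit is not $y\bmod 2$: the move $(-x_1,0)$ fixes $y$ and need not change $h$. For $x<x_1$ the game is $L(1;1,1)$, so $\mathcal{SG}(x,y)=y\bmod 2$ there. Then $\mathcal{SG}(x_1,0)=2$ and $\mathcal{SG}(x_1,1)=3$. Next, $\mathcal{SG}(x_1+1,0)=\mex\{\mathcal{SG}(1,0),\mathcal{SG}(x_1,1)\}=\mex\{0,3\}=1$, which contradicts your guess.

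The missing idea is \textsc{nim}-periodicity, $\mathcal{SG}(x,y+1)=\mathcal{SG}(x,y)\oplus 1$, which you can prove by the paper's induction. It reduces everything to one row: $\mathcal{SG}(x,y)=a_x\oplus(y\bmod 2)$, where $a_x=\mathcal{SG}(x,0)$ satisfies $a_x=\mex\{a_{x-x_1},\,a_{x-1}\oplus 1\}$ (first term absent for $x<x_1$). Its solution repeats the block of $x_1$ zeros, a $2$, $x_1$ ones, and a $2$, of length $2(x_1+1)$. This is the minimal period: the $2$s sit $x_1+1$ apart, and shifting by $x_1+1$ swaps the $0$s and $1$s. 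With this formula in hand, your mex verification goes through, and the vertical period is $2$.

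By comparison, the paper derives (c) from three ingredients. It uses \textsc{nim}-periodicity, diagonal periodicity in direction $(x_1+1,-1)$ (obtained via the $2$-block rule), and the fact that the $(x_1+1)$th column is a $2$-column.
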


He then conjectured the following.

\begin{conj}[\cite{Lengyel}]\label{lengyel's conjecture}\;

    \begin{enumerate}[(a)]
        \item If $b$ is odd and at least $3$
        , $L(b;b,0;1, 1)$ has  period $(2b(b+1),2b)$.
        \item For $b\geq 2$, $L(b;1,0;1, 1)$ has period $(4b,2b)$
    \end{enumerate}
\end{conj}

We will generalize Theorem \ref{thm:lengyel} and Conjecture \ref{lengyel's conjecture} together as follows.  

\begin{thm}[First Main Theorem]\;
\begin{enumerate}[(a)]
\item $L(1;1,0;1, 1)$ has period $(2,2)$.

\item If $b$ is even, and if $x_1$ is an odd multiple of $b$, then  $L(b;x_1,0;1, 1)$ has period  $(2b,2b)$.
\item In all other cases, $L(b;x_1,0;1, 1)$ has period $(2b(x_1+1), 2b)$.
\end{enumerate}  
\label{Main Theorem 1}
\end{thm}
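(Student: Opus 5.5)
The plan is to guess an explicit closed formula for $\mathcal{SG}(x,y)$ in $L(b;x_1,0;1,1)$ and then verify it by strong induction on the lexicographic order. The periods then follow by reading them off the formula. The moves from $(x,y)$ are $(x,y-b)$, $(x-x_1,y)$ and $(x-1,y+1)$, so every position has at most three options. Hence every \textsc{nim}-value lies in $\{0,1,2,3\}$.

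The introduction suggests organizing values by the classes $\{0,1\}$ and $\{2,3\}$, i.e.\ by $\lfloor \mathcal{SG}/2\rfloor$. I would first compute the column $x=0$, where only $(0,-b)$ is available. This gives $\mathcal{SG}(0,y)=\lfloor y/b\rfloor \bmod 2$, which already has period $2b$ in $y$.

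Next I would treat the strip $0\le x\le x_1-1$, where the $(-x_1,0)$ move is unavailable. There $\mathcal{SG}(x,y)=\mex\{\mathcal{SG}(x,y-b),\mathcal{SG}(x-1,y+1)\}$, so values stay in $\{0,1,2\}$. I expect a formula of the form: $\mathcal{SG}(x,y)$ is determined by the parity of $\lfloor (x+y)/b\rfloor+x$ together with the residue of $x+y$ modulo $b$. This is consistent with Lengyel's case (b), since the transfer move preserves $x+y$.

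Beyond the strip, I would conjecture a formula depending on $x \bmod 2(x_1+1)$, $\lfloor (x+y)/b\rfloor \bmod 2$ and $(x+y)\bmod b$. The key structural claim, to be checked in the induction, is a shift relation: moving $x$ by $x_1+1$ either preserves $\mathcal{SG}$ or applies the involution $a\mapsto a\oplus1$ on each class. Two such shifts restore the value, which gives period $2(x_1+1)$ in $x$. In case (b), $b$ even and $x_1$ an odd multiple of $b$, I expect the $(-x_1,0)$ move to land on a position whose value is already that of the $(-1,1)$-chain, so it is redundant. The formula then collapses to Lengyel's parity rule $\lfloor (x+y)/b\rfloor + x \pmod 2$ (refined to values in $\{0,1,2\}$), giving period $(2b,2b)$. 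Case (a) is a direct small check.

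The induction step compares the three option values with the formula's prediction and verifies the mex by cases on $x\bmod 2(x_1+1)$, $(x+y)\bmod b$ and the parities. I expect this to be the main obstacle. The bookkeeping near the boundaries (when $y<b$, $x<x_1$, or $x+y$ crosses a multiple of $b$) is delicate, and the case split depends on the parities of $b$ and of $x_1$ and on $x_1\bmod b$.

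To manage this, I would first prove the claim ``$\mathcal{SG}(x,y)\oplus 1$ structure'', namely that $\lfloor\mathcal{SG}(x,y)/2\rfloor$ has a simple description. I would handle the low bit separately, since the coarse class is governed by a one-dimensional recursion along the diagonals $x+y=\text{const}$.

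Finally, I must show the periods are exact, not merely periods. For this I would exhibit explicit pairs of positions: $(x,y)$ versus $(x+p',y)$ for a proper divisor $p'$ of $2b(x_1+1)$, and similarly in $y$, which have different values. Natural candidates are positions in the first column and in the column $x=x_1$, where the first subtraction becomes available. In case (c) this also needs the non-degeneracy condition that distinguishes it from (b).
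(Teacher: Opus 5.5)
There is a genuine gap. The structural ansatz you propose for case (c) is false whenever $b\ge 2$, so the induction you plan cannot be closed. Your key claim is that shifting $x$ by $x_1+1$ either preserves $\mathcal{SG}$ or applies $a\mapsto a\oplus 1$, so that $\mathcal{SG}$ has period $2(x_1+1)$ in $x$. That already contradicts the statement you are proving, and your own exactness step, since $2(x_1+1)$ is a proper divisor of $2b(x_1+1)$.

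Concretely, in $L(2;3,0;1,1)$ one computes $\mathcal{SG}(3,0)=2$ but $\mathcal{SG}(7,0)=1$, so a shift by $x_1+1=4$ sends a $2$ to a $1$. Also $\mathcal{SG}(0,0)=0$ while $\mathcal{SG}(8,0)=1$, even though these two positions agree in $x\bmod 2(x_1+1)$ and in $(x+y)\bmod 2b$. Likewise, in $L(3;1,0;1,1)$ you have $\mathcal{SG}(0,0)=0\neq 1=\mathcal{SG}(4,2)$. So no formula depending only on $x\bmod 2(x_1+1)$, $\lfloor (x+y)/b\rfloor\bmod 2$ and $(x+y)\bmod b$ exists. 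The coordinate $x+y$ is adapted only to the transfer move; the $(-x_1,0)$ move breaks it. Your ansatz is consistent only for $b=1$, where a shift by $x_1+1$ happens to coincide with a vertical shift by $b$.

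The missing idea is diagonal periodicity along the sum of the two non-vertical moves. With $2$ and $3$ identified, $\mathcal{SG}^*(x,y)=\mathcal{SG}^*(x+x_1+1,y-1)$ for $y\ge 1$. You prove this by a two-move argument on the first $b$ rows, where only $(-x_1,0)$ and $(-1,1)$ are available, and then extend it by \textsc{nim}-periodicity:
\begin{itemize}
\item A $0$ at $(x,y)$ makes $(x+x_1,y)$ and $(x+1,y-1)$ nonzero, hence puts a $0$ at $(x+x_1+1,y-1)$.
\item A $1$ propagates unless $(x+x_1,y)$ and $(x+1,y-1)$ are both $2/3$. For $x_1\ge 2$ the $2$-block rule forbids this: $2$s occur only in columns $\equiv x_1 \bmod (x_1+1)$, and these two columns differ by $x_1-1$. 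For $x_1=1$, $b\ge 2$, check the first $2$-block directly and use that $2$s propagate backwards along the diagonal.
\item The $2/3$ case then follows from the first two.
\end{itemize}

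Consequently each block of $x_1+1$ columns is the previous block shifted up one row. Up to the $2/3$ distinction, which is fixed by the row, the correct invariants are $x\bmod (x_1+1)$ and $\bigl(y+\lfloor x/(x_1+1)\rfloor\bigr)\bmod 2b$. It takes $2b$ such shifts to return, giving the period $2b(x_1+1)$.

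Exactness comes from two facts. First, column $x_1$ contains a $2$ except in case (b); compare with column $x_1$ of $L(b;1,1)$, whose values are $\lfloor (x+y)/b\rfloor+x \bmod 2$. Hence the $2$-columns are exactly those $\equiv x_1 \bmod (x_1+1)$, and any horizontal period is a multiple of $x_1+1$. Second, column $0$ ($b$ zeros followed by $b$ ones) has no cyclic period shorter than $2b$.

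Your treatment of case (b) is correct in outcome: the $(-x_1,0)$ move is redundant and the values are those of $L(b;1,1)$. This is the vector-elimination argument, but as written you only assert it.
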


As an example to aid visualization, Figures \ref{fig:L(1;1,0;1,1)} and \ref{fig:L(2;3,0;1,1)} show the array of \textsc{nim}-values for $L(1;1,0;1,1)$ and $L(2;3,0;1,1)$.  We can think of the game as being played on an infinite $\mathbb{N}\times\mathbb{N}$ board, where the origin is in the top left corner.  In the latter example, a legal move is to jump up $2$ units, left $3$ units, or down-left $1$ unit ($\sqrt{2}$ units?).  To aid clarity, we will freely conflate this table of \textsc{nim}-values with the game board itself.  Note that the initial column and row are each numbered $0$, as they represent empty piles of tokens.

\begin{figure}[h]
    \centering
    $\begin{array}{cc|cc}
         0 & 2 & 0 & 2  \\
         1 & 3 & 1 & 3   \\ \hline
         0 & 2 & 0 & 2   \\
         1 & 3 & 1 & 3 
    \end{array}$
    \caption{Four periods of the array of \textsc{nim}-values for $L(1;1,0;1,1)$}
    \label{fig:L(1;1,0;1,1)}
\end{figure}

\begin{figure}[h]
    \centering
    $\begin{tblr}{cccc|[dashed]cccc|[dashed]cccc|[dashed]cccc}
         0 & 1 & 1 & 2 & 0 & 0 & 1 & 1 & 1 & 0 & 0 & 2 & 1 & 1 & 0 & 0  \\
         0 & 0 & 1 & 1 & 1 & 0 & 0 & 2 & 1 & 1 & 0 & 0 & 0 & 1 & 1 & 2  \\ \hline[dashed]
         1 & 0 & 0 & 3 & 1 & 1 & 0 & 0 & 0 & 1 & 1 & 3 & 0 & 0 & 1 & 1  \\
         1 & 1 & 0 & 0 & 0 & 1 & 1 & 3 & 0 & 0 & 1 & 1 & 1 & 0 & 0 & 3
    \end{tblr}$
    \caption{One period of the array of \textsc{nim}-values for $L(2;3,0;1,1)$}
    \label{fig:L(2;3,0;1,1)}
\end{figure}

In Section \ref{Section-nim periodicity}, we carefully develop the notion \textsc{nim}-periodicity.  Our first result in that section will imply the following.
\begin{lem}\label{lem-2b}
The second component of the period of the Lengyel transfer game $L(b;x_1,y_1;x_2,y_2)$ is always $2b$. 
\end{lem}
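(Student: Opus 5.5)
The plan is to argue column by column: for each fixed $x$, I will show that $y\mapsto\mathcal{SG}(x,y)$ is purely periodic with period dividing $2b$. Column $x=0$ will then show that no smaller second component is possible. Throughout I use that the only move that keeps $x$ fixed is $(0,-b)$. The other two moves, $(-x_1,y_1)$ and $(-x_2,y_2)$, strictly decrease $x$ because $x_1,x_2\ge 1$.

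\textbf{Step 1 (the elementary mex fact).} If $g=\mex P$ and $h\neq g$, then $\mex(P\cup\{h\})=g$. This holds because adding an element other than the mex cannot fill the gap at the mex, and cannot create a smaller gap.

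\textbf{Step 2 (induction on $x$).} Assume that for every $x'<x$ we have $\mathcal{SG}(x',y+2b)=\mathcal{SG}(x',y)$ for all $y\ge 0$. Let $A(y)$ be the set of \textsc{nim}-values of the legal options of $(x,y)$ of the form $(x-x_i,y+y_i)$; this set is empty if no such option is legal. Since legality of these moves depends only on $x$, the hypothesis gives $A(y+2b)=A(y)$. Write $g(y)=\mathcal{SG}(x,y)$, so that
\[ g(y)=\mex A(y)\ \text{ for } y<b,\qquad g(y)=\mex\bigl(A(y)\cup\{g(y-b)\}\bigr)\ \text{ for } y\ge b. \]
Fix a residue $r\in\{0,\dots,b-1\}$, and put $P=A(r)=A(r+2kb)$ and $Q=A(r+b)=A(r+(2k+1)b)$. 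Let $g_k=g(r+kb)$. Then $g_0=\mex P$ and $g_1=\mex(Q\cup\{g_0\})$, and $g_1\neq g_0$ because $g_0$ lies in the set whose mex is $g_1$. By Step 1, $g_2=\mex(P\cup\{g_1\})=\mex P=g_0$. Consequently $g_3=\mex(Q\cup\{g_2\})=\mex(Q\cup\{g_0\})=g_1$, and induction on $k$ gives $g_{k+2}=g_k$ for all $k$. Hence every column is purely $2b$-periodic, so $2b$ is a valid second component of the period.

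\textbf{Step 3 (minimality).} Since $x_1,x_2\ge 1$, the only option from $(0,y)$ is $(0,y-b)$. A direct computation gives
\[ \mathcal{SG}(0,y)=\left\lfloor \tfrac{y}{b}\right\rfloor \bmod 2, \]
a sequence consisting of alternating blocks of $b$ zeros and $b$ ones, whose minimal period is exactly $2b$. Any second component $q$ of the period must in particular be a period of column $0$, so $2b\mid q$. Therefore the (minimal) second component is exactly $2b$.

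I expect the only delicate point to be Step 2, specifically noticing that the possibly large and arbitrary sets $P$ and $Q$ cause no trouble. This is exactly what Step 1 provides, together with the observation that $g_1\neq g_0$. Everything else is bookkeeping.
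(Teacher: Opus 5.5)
Your proof is correct, but it takes a different route from the paper. The paper obtains the lemma as an immediate corollary of \textsc{nim}-periodicity, $\mathcal{SG}(x,y+b)=\mathcal{SG}(x,y)\oplus 1$. It proves this by induction on the lexicographic order, with a case split on $a\in\{0,1\}$ versus $a\in\{2,3\}$; this uses that values never exceed $3$ and that the transfer options of $(x,y+b)$ carry the $\oplus 1$-images of the values at $(x,y)$. Applying $\oplus 1$ twice then gives vertical period $2b$. You instead prove only the weaker statement the lemma needs. Induction on the column gives $A(y+2b)=A(y)$ without any information relating $A(y)$ to $A(y+b)$. Your mex-stability fact (Step 1), together with $g_1\neq g_0$, then forces each residue class to alternate $g_0,g_1,g_0,\dots$. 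Your argument is shorter and avoids any bound on the \textsc{nim}-values and any case analysis. It also goes through verbatim for any number of transfer moves $(-x_i,y_i)$ with $x_i>0$, $y_i\ge 0$. What it does not give is the identification $g_1=g_0\oplus 1$, and that is the fact the paper relies on afterwards: Corollary \ref{corollary: 2s vs 3s}, the second case of Lemma \ref{lemma-vector elimination}, and every argument involving $\mathcal{SG}^*$. Your Step 3 is a genuine addition. \textsc{nim}-periodicity alone only shows that $b$ is not a vertical period; for $b=3$ it does not rule out period $2$, and individual columns such as those in Figure \ref{L(b;1,0;1,b-1)} really do have vertical period $2$. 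So your explicit check of column $0$, which the paper leaves implicit, is what pins the minimal second component at exactly $2b$.
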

While Lemma \ref{lem - eventual horizontal periodicity} says that all Lengyel transfer games are \emph{eventually} periodic in the first component. 

In Section \ref{section-diagonal periodicity}, we extend a key lemma of \cite{Larsson}, and use it to develop the notion of diagonal periodicity (Definition \ref{def: diagonal periodicity}).  In Section \ref{Section - Proof of Main Theorem 1}, we prove Theorem \ref{Main Theorem 1}. Section \ref{section - long preperiods} shows that we may have arbitrarily long preperiods.  In Section \ref{Section - simplifying assumptions}, we organize some results which simplify the remaining sections.  In Section \ref{section - eventual diagonal periodicity}, we prove that in most cases, eventual diagonal periodicity is obtained, leading to the following.
\begin{thm}[Second Main Theorem]\label{conjecture - big one}
    The first component of the period of the Lengyel transfer game $L(b;x_1,y_1;x_2,y_2)$ is a factor of
    \[g(b,x_1,y_1,x_2,y_2)=\frac{2b(x_1+x_2)}{\gcd\{2b,y_1+y_2\}}.\]
\end{thm}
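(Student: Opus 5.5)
The plan is to get the horizontal period from two other periodicities: the vertical one from Lemma \ref{lem-2b}, and an (eventual) diagonal periodicity along the combined move vector $d=(x_1+x_2,\,-(y_1+y_2))$. The point is that the two transfer moves $(-x_1,y_1)$ and $(-x_2,y_2)$ together displace a position by $-d$. So one expects the \textsc{nim}-value array to be invariant, from some column onward, under translation by $d$. If the array is invariant under both $(0,2b)$ and $d$, it is invariant under the whole lattice $\Lambda$ they generate. The horizontal period then divides the least positive $m$ with $(m,0)\in\Lambda$, and a short computation shows this $m$ is exactly $g(b,x_1,y_1,x_2,y_2)$.

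\textbf{Step 1 (vertical periodicity).} I will use Lemma \ref{lem-2b}, which I take to give $\mathcal{SG}(x,y)=\mathcal{SG}(x,y+2b)$ for all $(x,y)$. This lets us read $y$ modulo $2b$ throughout, so translates that would push $y$ below $0$ cause no trouble. I will also use Lemma \ref{lem - eventual horizontal periodicity}, so that we only need to identify the first component of an eventual period.

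\textbf{Step 2 (diagonal periodicity).} I need to show that there is an $N$ with
\[\mathcal{SG}(x+x_1+x_2,\;y-(y_1+y_2))=\mathcal{SG}(x,y)\quad\text{for all } x\ge N,\]
where $y$ is read modulo $2b$. This is the notion in Definition \ref{def: diagonal periodicity}, and it is proved in Section \ref{section - eventual diagonal periodicity} using the extension of the Larsson--Saha--Yokoo lemma from Section \ref{section-diagonal periodicity}.

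The approach I would try first is an induction on columns, carried out in the coarse \textsc{nim}-value classes $a\sim a\oplus 1$ from the introduction. Every option of $(x,y)$ lies in a column $x-x_1$ or $x-x_2$, or in the same column $2b$-periodically shifted. The option sets of $(x,y)$ and of $(x,y)+d$ therefore correspond under translation by $d$, once all the columns involved lie beyond the preperiod. The mex is then preserved.

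I expect this step to be the main obstacle. The paper itself says diagonal periodicity is obtained only ``in most cases'', and Section \ref{section - long preperiods} shows that preperiods can be arbitrarily long. The exceptional parameter choices will need separate treatment using the reductions of Section \ref{Section - simplifying assumptions}, for example dividing out common factors, or swapping the roles of the two transfer moves. In those cases I would check directly that the period still divides $g$, for instance by finding a smaller diagonal vector.

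\textbf{Step 3 (lattice computation).} Let $k=2b/\gcd\{2b,y_1+y_2\}$. This is the least positive integer with $k(y_1+y_2)\equiv 0 \pmod{2b}$. Applying Step 2 $k$ times, and using Step 1 to absorb the vertical shift $-k(y_1+y_2)$, gives
\[\mathcal{SG}(x+k(x_1+x_2),\,y)=\mathcal{SG}(x,y)\quad\text{for all } x\ge N.\]
So $k(x_1+x_2)=g$ is an eventual horizontal period. Since the set of eventual periods of a sequence is closed under differences, the minimal first component of the period divides $g$.
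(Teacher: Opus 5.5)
The skeleton of your Steps 1 and 3 is exactly the paper's closing argument, but Step 2 is false as you state it. Diagonal periodicity holds only for $\mathcal{SG}^*$, i.e.\ with $2$ and $3$ identified, not for exact values. In $L(2;3,0;1,1)$ (Figure~\ref{fig:L(2;3,0;1,1)}) we have $d=(4,-1)$, and $\mathcal{SG}(3,4)=2$ but $\mathcal{SG}(7,3)=3$. Since the period is $(16,4)$, this recurs at every $x\equiv 3 \pmod{16}$, so no choice of $N$ helps.

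The flaw in your sketch is the claim that option sets correspond under $d$. The recursion is anchored at rows $y<b$, where $(x,y-b)$ is unavailable. But $d$ shifts rows by $y_1+y_2$, so it moves positions between the blocks $0,\dots,b-1$ and $b,\dots,2b-1 \pmod{2b}$. Even if the earlier columns are already $d$-invariant, values across that boundary agree only up to $2\leftrightarrow 3$. They fail to agree even that far when the two transfer options of the position form a bad pair of $2$s (both in $\{2,3\}$): there a $0$ becomes a $1$. So the right invariant is $\mathcal{SG}^*$. Your classes $a\sim a\oplus 1$ are too coarse, since computing the mex requires distinguishing $0$ from $1$. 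After iterating the $\mathcal{SG}^*$ statement $k$ times, you still need Corollary~\ref{corollary: 2s vs 3s} to recover exact equality; this works because the row is then unchanged.

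The substantive gap is that eventual $\mathcal{SG}^*$-diagonal periodicity is never established. Deferring to Section~\ref{section - eventual diagonal periodicity} gives only the $\mathcal{SG}^*$ version, with a family of exceptions. Your own induction has no base case: ``beyond the preperiod'' presupposes the invariance it is meant to prove. And, by the above, its inductive step can fail (even in $\mathcal{SG}^*$) at bad pairs of $2$s. The missing ingredients are:
\begin{itemize}
\item Lemma~\ref{lem-no bad pairs of twos implies diagonal periodicity}, which derives diagonal periodicity from the absence of bad pairs by a purely local two-move argument, with no induction;
\item Proposition~\ref{prop: bad pairs eventually disappear}, which shows bad pairs die out. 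It builds arbitrarily long chains of them that violate the $2$-block rule (Lemma~\ref{lemma: ac rule}), or uses a parity argument along the chain when $x_1=x_2=1$.
\end{itemize}

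The exceptional cases also need real arguments. When $y_1+y_2=2b$, that proposition does not apply, and the paper proves diagonal periodicity directly. When $x_1=x_2$, $|y_1-y_2|\equiv b$ and $y_1+y_2\not\equiv 0 \pmod{2b}$, diagonal periodicity is false even for $\mathcal{SG}^*$. For instance, in $L(3;1,2;1,5)$ we have $\mathcal{SG}(0,9)=1$ while $\mathcal{SG}(2,2)=0$, and this repeats with period $2$. So no ``smaller diagonal vector'' argument is available. The paper instead computes the array outright (Proposition~\ref{prop - the trivial case generalized} with Corollary~\ref{lemma:dilation}) and finds horizontal period $x_1+x_2$, which divides $g$.
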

Note that $g(b,x_1,0,1,1)=2b(x_1+1)$, matching Theorem \ref{Main Theorem 1}(c).

In Section \ref{section - $L(b;x_1,0;x_2,y_2)$} we treat the periods of all games of the form $L(b;x_1,0,x_2,y_2)$, though we must leave the final result as conjecture.  
In Section \ref{section - lengyel's other conjecture} we prove the remaining conjecture from \cite{Lengyel}, which is about a class of games with multiple transfer options.   In Section \ref{section - open questions}, we explore some examples which lead to open questions.






\begin{rem}
    We extensively used computer models to find our results.  In doing so, if the period of $L(b;x_1,y_1;x_2,y_2)$ appears to be $(p,q)$, then to verify it as such, we only need to compute a block of size $(2m)\times(2n)$, where $m=\max\{p,x_1,x_2\}$ and $n=\max\{q,b,y_1,y_2\}$.  Hence, Figure \ref{fig:L(1;1,0;1,1)} constitutes a proof of Theorem \ref{Main Theorem 1}(a).
\end{rem}

\section{\textsc{Nim}-Periodicity}\label{Section-nim periodicity}

Lemma \ref{lem-2b} is a direct consequence of a more subtle phenomenon.  

\begin{lem}[\textsc{Nim}-Periodicity]
For any  $L(b;x_1,y_1;x_2,y_2)$, $\mathcal{SG}(x,y+b)=\mathcal{SG}(x,y)\oplus 1$.
\end{lem}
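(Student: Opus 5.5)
We prove the identity by strong induction on positions, ordered lexicographically (the monovariant that makes the game well-founded). Every move from $(x,y)$ either decreases $x$ (the two transfer moves $(-x_i,y_i)$) or keeps $x$ and decreases $y$ by $b$. So we induct on $x$ and, for fixed $x$, on $y$. The claim to establish at $(x,y)$ is $\mathcal{SG}(x,y+b)=\mathcal{SG}(x,y)\oplus 1$, assuming it holds at every lexicographically smaller pair $(x',y')$, meaning it holds for all $y'$ when $x'<x$.

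The key step compares the option sets. The options of $(x,y+b)$ are:
\begin{itemize}
\item $(x,y)$, via the subtraction move;
\item $(x-x_i,\,y+b+y_i)$ for $i=1,2$ when $x\ge x_i$.
\end{itemize}
The options of $(x,y)$ are $(x,y-b)$ when $y\ge b$, together with $(x-x_i,y+y_i)$.

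By induction in the first coordinate, $\mathcal{SG}(x-x_i,y+y_i+b)=\mathcal{SG}(x-x_i,y+y_i)\oplus 1$. By induction in $y$, if $y\ge b$ then $\mathcal{SG}(x,y)=\mathcal{SG}(x,y-b)\oplus 1$. Write $T=\{\mathcal{SG}(x-x_i,y+y_i)\}$ for the transfer-option values from $(x,y)$ and $s=\mathcal{SG}(x,y)$. Then
\[\mathcal{SG}(x,y)=\mex\bigl(T\cup\{s\oplus 1\}\bigr) \quad\text{or}\quad \mex(T)\ (\text{if } y<b),\]
\[\mathcal{SG}(x,y+b)=\mex\bigl((T\oplus 1)\cup\{s\}\bigr).\]

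The main work is a small lemma about mex and the involution $a\mapsto a\oplus 1$, which swaps $2k$ and $2k+1$. Let $A$ be a set of nonnegative integers and suppose $s=\mex(A)$ with $s\oplus 1\in A$ (or simply $s=\mex A$). We want $\mex\bigl((A\oplus 1)\cup\{s\}\bigr)=s\oplus 1$.

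First, $A$ contains every value below $s$, and the pairs $\{2k,2k+1\}$ are swapped by the involution. All pairs lying entirely below $s$ therefore lie in $A\oplus 1$. Now split by parity of $s$.
\begin{itemize}
\item If $s$ is odd, then $s-1\in A$, so $s\in A\oplus 1$ already. Also $s-1=s\oplus1$, and we need $s-1\notin(A\oplus1)\cup\{s\}$, i.e.\ $s\notin A$, which holds. So the mex is $s-1$.
\item If $s$ is even, all values below $s$ are covered and $s$ is added explicitly. Then $s+1=s\oplus1$ is absent iff $s\notin A$, which is true, so the mex is $s+1$.
\end{itemize}
In our setting $A=T\cup\{s\oplus1\}$ (or $A=T$ when $y<b$). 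The image $(A\oplus 1)\cup\{s\}$ equals $(T\oplus1)\cup\{s\}$, because $(s\oplus1)\oplus1=s$ is already present. This gives exactly $\mathcal{SG}(x,y+b)=s\oplus 1$.

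The only care needed is that the formula for $\mathcal{SG}(x,y)$ with the element $s\oplus1$ included is consistent: $s\oplus 1$ lies in the option set, yet $s$ is its mex. The mex lemma uses only $s=\mex A$, so this causes no trouble.

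The base cases are covered automatically: $x<\min x_i$ gives $T=\emptyset$, and $y<b$ is handled above. Lemma \ref{lem-2b} then follows by applying the identity twice: $\mathcal{SG}(x,y+2b)=\mathcal{SG}(x,y)$, while $\mathcal{SG}(x,y+b)\ne\mathcal{SG}(x,y)$ rules out period $b$. A smaller divisor of $2b$ would also need to be excluded, and this is easy to check from the same identity.
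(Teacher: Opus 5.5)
Your proof is correct, and its skeleton is the paper's: lexicographic induction, comparing the options of $(x,y+b)$ (namely $(x,y)$ and the shifted transfer options) with those of $(x,y)$, and applying the hypothesis at the transfer options and at $(x,y-b)$. The difference is in how the mex is evaluated.

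The paper splits on $a=\mathcal{SG}(x,y)$ being $0$, $1$, or in $\{2,3\}$. The last case uses that there are only two transfer moves: since $\mathcal{SG}(x,y-b)\in\{2,3\}$, the two transfer options must carry exactly the values $\{0,1\}$. You instead isolate the general fact that $\mex A=s$ implies $\mex\bigl((A\oplus 1)\cup\{s\}\bigr)=s\oplus 1$, proved by the parity of $s$. This is essentially the argument the paper later gives for its Generalized \textsc{Nim}-Periodicity lemma.

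The paper's version is tailored to the case at hand, where values never exceed $3$. Yours never uses a bound on the values, so it also proves the extension to an arbitrary family of transfer moves $(-x_i,y_i)$ with $x_i>0$, $y_i\ge 0$ without change. The paper only remarks on that extension before its proof.

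One small correction to your closing aside, which lies outside the statement. Excluding proper divisors of $2b$ as vertical periods requires the explicit first column ($b$ zeros followed by $b$ ones), not the identity alone. For $b=3$, the alternating column $0,1,0,1,\ldots$ also satisfies $\mathcal{SG}(x,y+3)=\mathcal{SG}(x,y)\oplus 1$.
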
  
For example, note that $\mathcal{SG}(x,y+2)=\mathcal{SG}(x,y)\oplus1$ for all $(x,y)$ in Figure \ref{fig:L(2;3,0;1,1)}, and $\mathcal{SG}(x,y+1)=\mathcal{SG}(x,y)\oplus1$ for all $(x,y)$ in Figure \ref{fig:L(1;1,0;1,1)}.  

Note that the following proof can be generalized to any vector game of the form  $\{(0,-b), (-x_i,y_i)_{i\in I}\}$, where each $x_i>0$, each $y_i\geq 0$, and $I$ is finite.
\begin{proof}
Consider some $L(b;x_1,y_1;x_2,y_2)$, and suppose $\mathcal{SG}(x,y)=a$. We proceed by induction on the lexicographic order of $(x,y)$.  Let $S$ be the subset of $\{(x-x_1,y+y_1), (x-x_2,y+y_2)\}$ whose components are all nonnegative.  Define $\mathcal {SG}(S)$ as the corresponding set of values.  Define $S+b$ as the corresponding subset of $\{(x-x_1,y+y_1+b), (x-x_2,y+y_2+b)\}$, and similarly for $\mathcal {SG}(S+b)$.  Note that the options of $(x,y+b)$ are $S+b$ and $(x,y)$.

If $a=0$, then $\mathcal {SG}(S)$ does not contain $0$, so by induction $\mathcal {SG}(S+b)$ does not contain $1$.  Hence $\mathcal{SG}(x,y+b)=1$.

If $a=1$, then $\mathcal {SG}(S)$ does not contain $1$, so by induction $\mathcal {SG}(S+b)$ does not contain $0$.  
Hence $\mathcal{SG}(x,y+b)=0$.

If $a\in\{2,3\}$, then by induction, $\mathcal {SG}(x,y-b)$, if it exists, is in $\{2,3\}$, so $\mathcal {SG}(S)=\{0,1\}$. Then by induction, $\mathcal {SG}(S+b)=\{0,1\}$.  Hence $\mathcal{SG}(x,y+b)=\mex\{0, 1, a\}=a\oplus 1$.
\end{proof}

Lemma \ref{lem-2b} follows immediately. So from now on we focus on the first component of the period $L(b; x_1; y_1; x_2, y_2)$.

\begin{defi}
    An array $\{\mathcal{SG}(x)\}_{x\in\mathbb{N}^n}$ is \textit{periodic in $u$} for some $u\in\mathbb{N}^n$ if $\mathcal{SG}(x)=\mathcal{SG}(x+u)$ for all $x\in\mathbb{N}^n$ 
\end{defi}

In other words, any Lengyel transfer game $L(b;x_1,y_1;x_2,y_2)$ is periodic in $(0,2b)$.  If it is also periodic in $(p,0)$, we either say that such a game has \emph{horizontal periodicity} (with period $p$), or simply say that it is periodic with period $p$.

The next three results are almost immediate consequences of \textsc{nim}-periodicity.

\begin{cor}\label{cor: y can be reduced mod 2b}
    If $y_1\equiv y'_1\bmod 2b$ and $y_2\equiv y'_2\bmod 2b$, then $L(b;x_1,y_1;x_2,y_2)$ and $L(b;x_1,y'_1;x_2,y'_2)$ have the same \textsc{nim}-values.
\end{cor}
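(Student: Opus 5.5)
We argue by strong induction on $(x,y)$ in the lexicographic order, showing that the two games $G=L(b;x_1,y_1;x_2,y_2)$ and $G'=L(b;x_1,y'_1;x_2,y'_2)$ assign the same value to every position. Write $\mathcal{SG}$ and $\mathcal{SG}'$ for their \textsc{nim}-value arrays. Since both are Lengyel transfer games, the \textsc{nim}-periodicity lemma applies to each. Hence both arrays satisfy $\mathcal{SG}(x,y+b)=\mathcal{SG}(x,y)\oplus 1$, and therefore both are periodic in $(0,2b)$.

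Fix $(x,y)$ and assume $\mathcal{SG}'=\mathcal{SG}$ at all lexicographically smaller positions.

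The subtraction option $(x,y-b)$, when it exists, is common to both games. It is lexicographically smaller, so by induction it has the same value in both.

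A transfer option in $G$ is $(x-x_i,y+y_i)$, and it exists iff $x\ge x_i$. The same condition governs the corresponding option $(x-x_i,y+y'_i)$ in $G'$. Since $x_i\ge 1$, both options have first coordinate strictly less than $x$. So every position in column $x-x_i$ is lexicographically below $(x,y)$, and the inductive hypothesis gives $\mathcal{SG}'=\mathcal{SG}$ on the whole column $x-x_i$. Writing $y'_i=y_i+2bk$ for some integer $k$, periodicity in $(0,2b)$ then gives
\[
\mathcal{SG}'(x-x_i,y+y'_i)=\mathcal{SG}(x-x_i,y+y'_i)=\mathcal{SG}(x-x_i,y+y_i).
\]
Thus the multisets of option values at $(x,y)$ coincide in $G$ and $G'$, so the mex values agree.

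The one point needing care is that $y+y'_i$ may be far above $y$, or $y'_i$ may be smaller than $y_i$. This is harmless, because induction covers every row of an earlier column, not just rows near $y$. It is also why the argument uses periodicity of the already-equal array $\mathcal{SG}$ on column $x-x_i$, rather than comparing the two games row by row. The base case needs no separate treatment, since the induction is well-founded on $\mathbb{N}^2$ under the lexicographic order.

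One could instead apply \textsc{nim}-periodicity to $\mathcal{SG}'$ directly, since it is also a Lengyel transfer game. Either way the key input is periodicity in $(0,2b)$ together with $x_i>0$.
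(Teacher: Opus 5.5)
Your proof is correct and follows essentially the same argument as the paper: lexicographic induction, with \textsc{nim}-periodicity (periodicity in $(0,2b)$) moving each transfer option's value from row $y+y'_i$ to row $y+y_i$ inside the earlier column $x-x_i$. Your observation that the entire column $x-x_i$ lies lexicographically below $(x,y)$, so that the inductive hypothesis covers rows arbitrarily far from $y$, just makes explicit what the paper's one-line proof leaves implicit.
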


\begin{proof}
    We proceed by induction on the lexicographic order of $(x,y)$. Assume that both games agree about the \textsc{nim}-values of all points less than $(x,y)$.  By \textsc{nim}-periodicity, $\mathcal{SG}(x-x_i,y+y_i)=\mathcal{SG}(x-x_i,y+y'_i)$, $1\leq i\leq 2$. Thus both games agree about the value of $\mathcal{SG}(x,y)$.
\end{proof}

And so we restrict our analysis to the case $0\leq y_1, y_2<2b.$

\begin{cor}\label{corollary: 2s vs 3s}
    In $L(b;x_1,y_1;x_2,y_2)$, the rows numbered $0,\ldots,b-1 \bmod 2b$ cannot contain any $3$s.  The other rows cannot contain any $2$s
\end{cor}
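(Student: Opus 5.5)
The plan is to first bound all values by $3$, and then use \textsc{nim}-periodicity to decide whether a $2$ or a $3$ can occur in a given row. Each position $(x,y)$ has at most three options: $(x,y-b)$ when $y\geq b$, and the (at most) two transfer options. Hence $\mathcal{SG}(x,y)\leq 3$ everywhere, and it suffices to show two things: rows $y$ with $y \bmod 2b\in\{0,\ldots,b-1\}$ contain no $3$, and rows with $y\bmod 2b\in\{b,\ldots,2b-1\}$ contain no $2$.

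For the first claim, fix $(x,y)$ with $y\bmod 2b\in\{0,\ldots,b-1\}$ and argue by induction on $y$ (for each fixed $x$). If $y<b$, the position has no option $(x,y-b)$. It therefore has at most two options, so its value is at most $2$ and in particular not $3$. If $y\geq b$, write $y=y'+b$, so that $y'\bmod 2b\in\{b,\ldots,2b-1\}$ and $y'\geq b$. Since $y'$ itself has the form $y''+b$ with $y''\bmod 2b\in\{0,\ldots,b-1\}$, two applications of \textsc{nim}-periodicity give $\mathcal{SG}(x,y)=\mathcal{SG}(x,y'')\oplus 1\oplus 1=\mathcal{SG}(x,y'')$. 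By induction $\mathcal{SG}(x,y'')\neq 3$, so $\mathcal{SG}(x,y)\neq 3$. Equivalently, this is just periodicity in $(0,2b)$, which reduces the claim to the rows $0\leq y<b$.

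For the second claim, take a row $y\bmod 2b\in\{b,\ldots,2b-1\}$ and write $y=y_0+b$ with $y_0\bmod 2b\in\{0,\ldots,b-1\}$. Then $\mathcal{SG}(x,y)=\mathcal{SG}(x,y_0)\oplus 1$. By the first claim $\mathcal{SG}(x,y_0)\in\{0,1,2\}$, so $\mathcal{SG}(x,y)\in\{1,0,3\}$, and this value is never $2$.

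There is no real obstacle here. The only point needing care is the observation that the bottom band of rows $0\leq y<b$ has at most two options per position, because the vertical move $(0,-b)$ is unavailable there. This is what caps those values at $2$, and everything else is transported to the other rows by \textsc{nim}-periodicity and periodicity in $(0,2b)$.
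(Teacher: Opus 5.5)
Your proof is correct and follows the same argument as the paper. The rows $0\le y<b$ have at most two options, so their values lie in $\{0,1,2\}$; \textsc{nim}-periodicity then carries this to the other rows, applied twice for rows $\equiv 0,\ldots,b-1 \bmod 2b$ (giving $\{0,1,2\}$) and once for the remaining rows (giving $\{1,0,3\}$).
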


\begin{proof}
    Each of the positions in the first $b$ rows have at most $2$ options.  So these rows only contain the \textsc{nim}-values $0$, $1$, and $2$.  The rest follows by induction and \textsc{nim}-periodicity.
\end{proof}

\textsc{nim}-periodicity also allows us to show that all Lengyel transfer games are eventually periodic.

\begin{lem}\label{lem - eventual horizontal periodicity}
    Let $L(b;x_1,y_1;x_2,y_2)$  be a Lengyel transfer game.  Then it is eventually horizontally periodic.
\end{lem}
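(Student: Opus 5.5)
The plan is a pigeonhole argument on columns. By \textsc{nim}-periodicity, each column $x$ of the array is determined by its first $2b$ entries, and in fact by its first $b$ entries, since $\mathcal{SG}(x,y+b)=\mathcal{SG}(x,y)\oplus 1$. Moreover, every value lies in $\{0,1,2,3\}$: a position has at most three options, and by Corollary \ref{corollary: 2s vs 3s} the first $b$ rows contain only $0,1,2$. So define the \emph{state} of column $x$ to be the vector $C_x=(\mathcal{SG}(x,0),\ldots,\mathcal{SG}(x,2b-1))\in\{0,1,2,3\}^{2b}$. There are only finitely many possible states.

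The key step is to show that column $x$ is a deterministic function of the $m=\max\{x_1,x_2\}$ preceding columns, once $x\ge m$. The options of $(x,y)$ are $(x,y-b)$ when $y\ge b$, together with $(x-x_i,y+y_i)$. For $x\ge m$ the latter always exist, and by \textsc{nim}-periodicity $\mathcal{SG}(x-x_i,y+y_i)$ is read off from $C_{x-x_i}$ at row $(y+y_i)\bmod 2b$, applying the $\oplus1$ rule as needed (indeed reducing mod $2b$ gives exactly the value). We then compute $\mathcal{SG}(x,y)$ for $y=0,1,\ldots,2b-1$ in increasing order of $y$. The vertical option $(x,y-b)$ has already been computed within the same column, so $C_x=F(C_{x-m},\ldots,C_{x-1})$ for a fixed map $F$. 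One point needs care: $x_1$ or $x_2$ might equal $0$ with a positive $y_i$, which would make the option lie in the same column at a larger $y$. The definition rules this out, since $x_1,x_2\in\mathbb{N}^+$, so no such option occurs.

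With this, the tuples $T_x=(C_{x-m},\ldots,C_{x-1})$ for $x\ge m$ range over a finite set of size at most $4^{2bm}$. By pigeonhole there exist $m\le x_0<x_0+p$ with $T_{x_0}=T_{x_0+p}$. Since $T_{x+1}$ is determined by $T_x$ through $F$, induction gives $T_{x}=T_{x+p}$ for all $x\ge x_0$. Hence $C_x=C_{x+p}$ for $x\ge x_0$, and by periodicity in $(0,2b)$ we get $\mathcal{SG}(x,y)=\mathcal{SG}(x+p,y)$ for all $y$ and all $x\ge x_0$. This is eventual horizontal periodicity.

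The only real obstacle is verifying the determinism claim, i.e. that the finitely-described column states genuinely suffice. That in turn rests on two facts: \textsc{nim}-periodicity reduces each column to a finite window, and the values are bounded by $3$ so the state space is finite. Both are available from the earlier results, so I expect the argument to be short.
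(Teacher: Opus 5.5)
Your proposal is correct and follows essentially the same argument as the paper. In both, \textsc{nim}-periodicity reduces each column to its first $2b$ entries, these are determined by the windows of the previous $\max\{x_1,x_2\}$ columns, and the pigeonhole principle forces eventual repetition. The only difference is cosmetic: you slide the window one column at a time, while the paper steps by whole blocks of $M$ columns.
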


\begin{proof}
    Our first step is to show that $L(b;x_1,y_1;x_2,y_2)$ has eventual row periodicity, as defined in \cite{Larsson}.

    
    Let $M=\max\{x_1,x_2\}$. Then \textsc{nim}-periodicity implies that each block of $M$ columns is a function of the first $2b$ rows of the previous $M$ columns.  So by the pigeonhole principle, these must eventually repeat.  
\end{proof}


We may also restrict our analysis to the cases where the horizontal/vertical components are relatively prime.  

\begin{lem}
    For any vector game defined by $(x_i,y_i)_{i\in I}$, and any positive integers $m,n$, the array of nim values of the game  $(m\cdot x_i,n\cdot y_i)_{i\in I}$ is given by that of $(x_i,y_i)_{i\in I}$, but with each entry replaced by an $m\times n$ block copy of itself.  In particular, if the period of $(x_i,y_i)_{i\in I}$ is $(p,q)$ with $p,q>1$, then the period of $(m\cdot x_i,n\cdot y_i)_{i\in I}$ is $(mp,nq)$.  In the case $p=1$ or $q=1$, that part of the period remains the same.
\end{lem}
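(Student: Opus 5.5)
The plan is to show that the scaled game decomposes into $mn$ independent copies of the original game, one for each residue class of the position. Write the original game as $G$, with move set $\{(x_i,y_i)\}_{i\in I}$; by the paper's convention this includes a move like $(0,-b)$ alongside the transfer moves. Write $G'$ for the game with move set $\{(m x_i, n y_i)\}_{i\in I}$.

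Given $(X,Y)\in\mathbb{N}^2$, write $X=mx+r$ and $Y=ny+s$ with $0\le r<m$ and $0\le s<n$. Every move of $G'$ changes $X$ by a multiple of $m$ and $Y$ by a multiple of $n$, so $r$ and $s$ never change. Adding $(m x_i,n y_i)$ to $(mx+r,ny+s)$ gives $(m(x+x_i)+r,\,n(y+y_i)+s)$. This position lies in $\mathbb{N}^2$ exactly when $(x+x_i,y+y_i)$ does, because $0\le r<m$ and $0\le s<n$. So the map $(x,y)\mapsto(mx+r,ny+s)$ is an isomorphism of game graphs from $G$ onto the sub-board of $G'$ with fixed residues $(r,s)$. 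The monovariant (lexicographic order) is preserved, so there is no loopiness issue. The positions reachable from $(X,Y)$ therefore form a copy of the followers of $(x,y)$ in $G$. By induction on the lexicographic order we get
\[
\mathcal{SG}_{G'}(mx+r,ny+s)=\mathcal{SG}_G(x,y).
\]
This is exactly the statement that each entry is replaced by an $m\times n$ block copy of itself.

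For the period statement, first note that if $G$ is periodic in $(p,0)$, then $G'$ is periodic in $(mp,0)$, and similarly $(0,q)$ gives $(0,nq)$. So the minimal period of $G'$ divides, or at least is bounded by, $(mp,nq)$. The converse direction is where some care is needed.

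Let $P$ be a horizontal period of $G'$, meaning $\mathcal{SG}_{G'}(X+P,Y)=\mathcal{SG}_{G'}(X,Y)$ for all $(X,Y)$. I would use that column blocks of width $m$ are constant within a block. If $m\nmid P$, write $P=mk+t$ with $0<t<m$. Comparing residue $r=m-t$ with $r=m-t-1$ (or a suitable pair) then forces column $x$ and column $x+1$ of $G$ to agree for every $x$, which would make $G$ have horizontal period $1$. Hence when $p>1$ we get $m\mid P$. Then $P=mk$ gives $\mathcal{SG}_G(x+k,y)=\mathcal{SG}_G(x,y)$, so $p\mid k$, and the minimal horizontal period of $G'$ is exactly $mp$. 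When $p=1$, every column of $G$ is identical, so every column of $G'$ is identical as well, and the period stays $1$. The vertical component is handled symmetrically.

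The main obstacle is the divisibility step $m\mid P$. To make it rigorous, I would argue as follows. Since $P$ is a period, the function $x\mapsto\mathcal{SG}_G(\lfloor X/m\rfloor,\cdot)$, viewed through columns, satisfies $f(X)=f(X+P)$. The periods of $f$ form a set closed under gcd: periods generate a subgroup, and for functions on $\mathbb{N}$ this holds eventually or via the standard argument. Since $m$ is not a period when $p>1$, one has to be careful. The cleaner route is the following: $\gcd(P,mp)$ is also a period of $f$, and if it is not a multiple of $m$, the block structure yields adjacent equal blocks everywhere, which collapses to $p=1$.
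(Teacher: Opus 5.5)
Your proof is correct. The paper gives no argument of its own here; it defers to \cite{Lengyel}, Proposition~2.2. Your residue-class decomposition is therefore a self-contained replacement rather than a competing route.

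Fixing $0\le r<m$ and $0\le s<n$, the map $(x,y)\mapsto(mx+r,ny+s)$ is an isomorphism of game graphs onto the positions with those residues. That gives the block-copy statement at once. You then go beyond the citation by also proving that the scaled period $mp$ is minimal.

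Your final paragraph about gcd's and subgroups of periods is unnecessary, because the pair of residues you already named closes the argument directly. Write $C(x)$ for column $x$ of the original array and $P=mk+t$ with $0<t<m$.
\begin{itemize}
\item Residue $r=m-t$ gives $C(x)=C(x+k+1)$.
\item Residue $r=m-t-1$ gives $C(x)=C(x+k)$.
\end{itemize}
Applying the second identity at $x+1$ and the first at $x$ gives $C(x+1)=C(x+1+k)=C(x)$ for every $x$, which forces $p=1$. So when $p>1$ you get $m\mid P$. Then $k$ is a period of the original game, so $k\ge p$ (indeed $p\mid k$), and $mp$ is minimal.

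This pointwise computation has a side benefit: it only uses columns $x$ beyond any preperiod. So the same proof handles eventual periods, which is how Corollary~\ref{lemma:dilation} actually gets used in Section~\ref{Section - simplifying assumptions}, for games that may have preperiods.
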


Note that this lemma is easily generalizable to more than $2$ dimensions, but we state it this way for clarity.

\begin{proof}
    Essentially \cite[Proposition 2.2]{Lengyel}.
\end{proof}

\begin{cor}\label{lemma:dilation}
    If the period of $L(b;x_1,y_1;x_2,y_2)$ is $(p,q)$, with $p,q>1$, then the period of $L(nb;mx_1,ny_1;mx_2,ny_2)$ is $(mp,nq)$.  If either of $p$ or $q$ is $1$, then that component of the period remains $1$.
\end{cor}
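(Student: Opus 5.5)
The plan is to obtain this as a direct specialization of the preceding dilation lemma. Observe that $L(nb;mx_1,ny_1;mx_2,ny_2)$ is precisely the vector game given by the moves $\{(0,-nb),(-mx_1,ny_1),(-mx_2,ny_2)\}$. This is the image of $L(b;x_1,y_1;x_2,y_2)$, which has moves $\{(0,-b),(-x_1,y_1),(-x_2,y_2)\}$, under the scaling $(u,v)\mapsto(mu,nv)$.

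\textbf{Step 1 (block copies).} The lemma says the nim-array of the dilated game is obtained from the original by replacing each entry with an $m\times n$ block. Concretely, $\mathcal{SG}'(x,y)=\mathcal{SG}(\lfloor x/m\rfloor,\lfloor y/n\rfloor)$. To check this directly, use induction on lexicographic order. Write $x=mx'+r$ and $y=ny'+s$ with $0\le r<m$ and $0\le s<n$. Every move $(-mx_i,ny_i)$ or $(0,-nb)$ preserves the residues $(r,s)$. A move is legal from $(x,y)$ exactly when the corresponding move is legal from $(x',y')$, since $x-mx_i\ge 0$ if and only if $x'-x_i\ge 0$ (because $r<m$), and likewise in the second coordinate. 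So the option sets correspond bijectively, and the mex values agree.

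\textbf{Step 2 (periods transfer).} Suppose the original array is periodic in $(p,0)$. Then the dilated array is periodic in $(mp,0)$, since $\lfloor (x+mp)/m\rfloor=\lfloor x/m\rfloor+p$. The same argument gives $(0,nq)$ in the second coordinate.

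\textbf{Step 3 (minimality).} This is the main point needing care. Suppose the dilated array is periodic in $(p',0)$ for some $p'$. First, if $m\nmid p'$, I would argue that constant blocks force the original array to be periodic in a smaller shift. Take $x$ with residue $r=m-1$ and $p'\equiv t\not\equiv 0 \pmod m$. Comparing columns $x$ and $x+p'$ shows that original column $c$ equals column $c+\lceil p'/m\rceil$. Using residue $0$ instead shows column $c$ equals column $c+\lfloor p'/m\rfloor$. The difference gives periodicity in $1$, that is, $p=1$. When $p>1$ this is a contradiction, so $m\mid p'$ and $p'/m$ is a period of the original; hence $p\le p'/m$ and the minimal period is exactly $mp$.

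If instead $p=1$, every column of the original is identical. Then the dilated array also has all columns identical, so its horizontal period is $1$, as claimed. The vertical component is handled symmetrically with $n$ and $q$.

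Since all of this is the content of the dilation lemma together with the observation that the Lengyel game is closed under this scaling, the corollary follows immediately. The only real obstacle is the minimality bookkeeping in Step 3.
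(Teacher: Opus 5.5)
Your proposal is correct and takes the paper's route: the corollary is the preceding dilation lemma applied to the move set $\{(0,-b),(-x_1,y_1),(-x_2,y_2)\}$, and your Steps 1--3 supply the block-copy induction and the minimality argument that the paper leaves to Lengyel's Proposition 2.2. Since Lengyel games may have preperiods, note also that your Step 3 argument carries over unchanged to eventual periods when applied only to columns beyond the preperiod.
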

Hence we may restrict our analysis to the case when $\gcd\{x_1,x_2\}=\gcd\{b,y_1,y_2\}=1$.
Note also that \[g(nb,mx_1,ny_1,mx_2,ny_2) =\frac{2nb(mx_1+mx_2)}{\gcd\{2nb,ny_1+ny_2\}}=\frac{(nm)2b(x_1+x_2)}{(n)\gcd\{2b,y_1+y_2\}}=m\cdot g(b,x_1,y_1,x_2,y_2),\] 
so if Theorem \ref{conjecture - big one} holds for $L(b;x_1,y_1;x_2,y_2)$, then it holds for $L(nb;mx_1,ny_1;mx_2,ny_2)$.

We are now ready to prove a degenerate version of Theorem \ref{Main Theorem 1}, which will also serve as a step in its proof. Let $L(b;x_1,y_1)$ denote the vector game defined by $\{(0,-b),(-x_1,y_1)\}$.  Figure \ref{fig:two-move case} shows the initial periods for the games $L(2;1,1)$ and $L(3;1,1)$. 

\begin{figure}[h]
    \centering
    $\begin{tblr}{cccc|cccc}
         0 & 1 & 1 & 0 & 0 & 1 & 1 & 0 \\
         0 & 0 & 1 & 1 & 0 & 0 & 1 & 1  \\ \hline[dashed]
         1 & 0 & 0 & 1 & 1 & 0 & 0 & 1  \\
         1 & 1 & 0 & 0 & 1 & 1 & 0 & 0
    \end{tblr}$ \qquad $\begin{tblr}{ccc|ccc}
         0 & 1 & 0 & 0 & 1 & 0 \\
         0 & 1 & 1 & 0 & 1 & 1 \\
         0 & 0 & 1 & 0 & 0 & 1 \\ \hline[dashed]
         1 & 0 & 1 & 1 & 0 & 1 \\
         1 & 0 & 0 & 1 & 0 & 0 \\
         1 & 1 & 0 & 1 & 1 & 0 
    \end{tblr}$
    \caption{Two initial periods of the arrays of \textsc{nim}-values for $L(2;1,1)$ and $L(3;1,1)$}
    \label{fig:two-move case}
\end{figure}

\begin{thm}[Zeroeth Main Theorem]\label{Theorem: 2 vector case}
    If $y_1$ is an odd multiple of $b$, then 
    the game $L(b;x_1,y_1)$ has period $(1,2b)$.  Otherwise, it has period $\left(\frac{2bx_1}{\gcd\{2b,y_1+b\}},2b\right)$. 
\end{thm}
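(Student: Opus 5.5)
The plan is to find the \textsc{nim}-values of $L(b;x_1,y_1)$ in closed form and then read off the period. I will use \textsc{Nim}-Periodicity and Corollary \ref{cor: y can be reduced mod 2b}, and assume $0\le y_1<2b$.

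First I restrict the possible values. A position $(x,y)$ with $y<b$ has at most one option, the transfer to $(x-x_1,y+y_1)$, so its value lies in $\{0,1\}$. By \textsc{Nim}-Periodicity, $\mathcal{SG}(x,y+b)=\mathcal{SG}(x,y)\oplus 1$, so every value on the board lies in $\{0,1\}$. Write $f(x,y)=\mathcal{SG}(x,y)$ and $y=b\lfloor y/b\rfloor+r$ with $0\le r<b$. Then
\[f(x,y)\equiv\lfloor y/b\rfloor+f(x,r)\pmod 2.\]
For $r<b$, the position $(x,r)$ is terminal if $x<x_1$, and otherwise its single option is $(x-x_1,r+y_1)$, giving $f(x,r)=1\oplus f(x-x_1,r+y_1)$.

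Next I prove the closed form by induction on $k=\lfloor x/x_1\rfloor$:
\[f(x,y)\equiv\left\lfloor\frac{y+k(y_1+b)}{b}\right\rfloor \pmod 2 .\]
For $k=0$ this is just $f(x,y)\equiv\lfloor y/b\rfloor$, since column $x$ only has vertical moves. For the inductive step, combining the two displayed recurrences gives
\[f(x,y)\equiv\lfloor y/b\rfloor+1+\left\lfloor\frac{r+y_1+(k-1)(y_1+b)}{b}\right\rfloor .\]
Absorbing the $1$ as $b/b$ and using $y=b\lfloor y/b\rfloor+r$ yields $\lfloor (y+k(y_1+b))/b\rfloor$. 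So $f$ depends on $x$ only through $k$, and on $k$ only through the shift $c_k=k(y_1+b)\bmod 2b$. This is because adding $2b$ inside the floor changes the value by $2$.

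Finally I read off the period. Let $\phi_c(y)=\lfloor (y+c)/b\rfloor \bmod 2$. For $c\in\{0,\dots,2b-1\}$, $\phi_c$ coincides with $\phi_0$ only when $c=0$: if $c\neq 0$, then $y=2b-c$ gives $\phi_c(y)=0$ while $\phi_0(2b-c)=1$ for $c\le b$, and a similar test point handles $b<c<2b$. Hence column $x$ equals column $x'$ exactly when $(k-k')(y_1+b)\equiv 0\pmod{2b}$.

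If $y_1$ is an odd multiple of $b$, i.e.\ $y_1+b\equiv 0\pmod{2b}$, all columns are equal and the period is $(1,2b)$.

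Otherwise, consecutive blocks of $x_1$ columns differ, and the columns repeat after exactly $2b/\gcd\{2b,y_1+b\}$ blocks. Any horizontal period $p$ must map block boundaries to block boundaries, since these are exactly the places where adjacent columns differ. Therefore $x_1\mid p$, and the minimal period is $2bx_1/\gcd\{2b,y_1+b\}$. The vertical component $2b$ comes from Lemma \ref{lem-2b}; its minimality follows because $y\mapsto y+b$ flips every value.

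I expect the main obstacle to be the minimality argument, namely ruling out horizontal periods that are not multiples of $x_1$. The block-boundary observation should handle this.
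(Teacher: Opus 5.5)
Your proof is correct and is essentially the paper's argument. The paper reduces to $x_1=1$ via Corollary \ref{lemma:dilation} and observes that each column is the previous one shifted up by $y_1$ and flipped, i.e.\ shifted by $y_1+b$; your closed form $\lfloor (y+k(y_1+b))/b\rfloor \bmod 2$ records exactly this, and your block-boundary argument replaces the dilation step while making horizontal minimality explicit.

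One small point: the flip under $y\mapsto y+b$ only rules out vertical periods dividing $b$. For example, when $b$ is odd, the sequence $0,1,0,1,\ldots$ also flips under $y\mapsto y+b$ but has period $2$. Full minimality of $2b$ comes instead from your own observation that $\phi_c\neq\phi_0$ for $0<c<2b$, applied to column $0$.
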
  

Note that $\frac{2bx_1}{\gcd\{2b,y_1+b\}}=g(b,x_1,y_1,0,-b)$, so this is also a kind of degenerate case of Theorem \ref{conjecture - big one}. 

\begin{proof}
In light of Corollary \ref{lemma:dilation}, we may restrict our attention to the case $x_1=1$.

The first $2b$ entries in the  first column are $b$ $0$s followed by $b$ $1$s.  After this,  each column is a repeat of the previous column, but shifted up by $y_1$ positions and $\oplus 1$-ed.  In other words, it is as if each column is an upward shift of the previous column by $y_1+b$ units.  Hence it takes $\frac{2b}{\gcd\{2b,y_1+b\}}$ of these shifts for a column to repeat.

Note that $\gcd\{2b,y_1+b\}=2b$ precisely when $y_1$ is an odd multiple of $b$.  So in this case $\frac{2b}{\gcd\{2b,y_1+b\}}=1$.
\end{proof}

\begin{cor}\label{cor: period of L(b,1,1)}
    The period of $L(b;1,1)$ is $(2b,2b)$ if $b$ is even, and $(b,2b)$ if $b$ is odd.
\end{cor}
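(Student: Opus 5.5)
This is a direct specialization of the Zeroeth Main Theorem (Theorem \ref{Theorem: 2 vector case}) with $x_1=y_1=1$. The second component $2b$ comes from Lemma \ref{lem-2b}, or equally from Theorem \ref{Theorem: 2 vector case}. So the only work is computing the first component.

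First I check whether we are in the exceptional case. That case requires $y_1=1$ to be an odd multiple of $b$, which happens exactly when $b=1$. The corollary implicitly concerns $b\ge 2$, or else treats $b=1$ separately. For $b=1$, Theorem \ref{Theorem: 2 vector case} gives horizontal period $1$, which agrees with the odd formula $b=1$. So the statement holds for $b=1$ as well, reading the period as $(1,2)$.

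For $b\ge 2$ the horizontal period is
\[
\frac{2b\cdot 1}{\gcd\{2b,\,b+1\}}.
\]
To evaluate the gcd, note that $\gcd\{b,b+1\}=1$, so $\gcd\{2b,b+1\}=\gcd\{2,b+1\}$. This equals $2$ when $b$ is odd and $1$ when $b$ is even. Substituting:
\begin{itemize}
\item if $b$ is even, the horizontal period is $2b$, so the period is $(2b,2b)$;
\item if $b$ is odd, the horizontal period is $b$, so the period is $(b,2b)$.
\end{itemize}
This matches Figure \ref{fig:two-move case}, which shows horizontal period $4$ for $b=2$ and $3$ for $b=3$.

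The one delicate point is whether Theorem \ref{Theorem: 2 vector case} gives the \emph{exact} minimal period rather than merely a period. In its proof the columns are successive upward shifts by $b+1$ of a single column of $b$ zeros followed by $b$ ones. The minimal period of that column is exactly $2b$. Hence the column repeats exactly when the total shift is divisible by $2b$, so the minimal horizontal period is exactly $2b/\gcd\{2b,b+1\}$. I would state this explicitly to close the argument.
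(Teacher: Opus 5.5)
Your proposal is correct and follows the paper's route: the corollary is an immediate specialization of Theorem \ref{Theorem: 2 vector case} with $x_1=y_1=1$, and $\gcd\{2b,b+1\}=\gcd\{2,b+1\}$ separates the even and odd cases. Your handling of $b=1$ checks out, and your minimality remark (the column $0^b1^b$ has least cyclic period $2b$, so the number of shifts is exact) makes explicit what the theorem's proof leaves implicit.
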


        

\subsection{Generalized \textsc{Nim}-Periodicity}

One-dimensional vector subtraction games satisfy the Ferguson pairing property, which states that positions of value $0$ and $1$ are paired by the smallest move.

\begin{prop}\cite{Ferguson}
    Given $n_1<n_2<\ldots<n_k$, then in \textsc{subtract}$(n_1,n_2,\ldots,n_k)$, $\mathcal{SG}(x)=1$ if and only if $\mathcal{SG}(x-n_1)=0$.
\end{prop}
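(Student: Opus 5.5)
We prove the Ferguson pairing property by strong induction on $x$, working in \textsc{subtract}$(n_1,\ldots,n_k)$ with $\mathcal{SG}(x)=0$ for $x<n_1$. Each direction of the equivalence uses the induction hypothesis at smaller positions to control which of the values $0,1$ occur among the options of $x$. The hypothesis $n_1<n_2<\cdots<n_k$ enters only in the base case and in the second direction, where we need $x-n_1$ to be a position exactly when $x$ has any options at all.

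We first show that $\mathcal{SG}(x-n_1)=0$ implies $\mathcal{SG}(x)=1$. Since $x-n_1$ is an option of $x$, the value $0$ lies in the option set, so $\mathcal{SG}(x)\neq 0$. It remains to show that no option $x-n_i$ has value $1$. Suppose $\mathcal{SG}(x-n_i)=1$ for some $i$. By the induction hypothesis applied to $x-n_i<x$, we get $\mathcal{SG}(x-n_i-n_1)=0$. But $x-n_i-n_1=(x-n_1)-n_i$ is an option of $x-n_1$, so $x-n_1$ has an option of value $0$. This contradicts $\mathcal{SG}(x-n_1)=0$. Hence $1$ is missing from the options, and $\mathcal{SG}(x)=\mex=1$.

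Next we show the converse: if $\mathcal{SG}(x)=1$, then $x-n_1\ge 0$ and $\mathcal{SG}(x-n_1)=0$.
\begin{enumerate}[(i)]
\item Since $\mathcal{SG}(x)=1$, some option has value $0$, so $x\ge n_i\ge n_1$ for some $i$. In particular $x-n_1$ is a legal position.
\item Suppose $\mathcal{SG}(x-n_1)=a\ne 0$. Then $x-n_1$ has an option $x-n_1-n_j$ with value $0$.
\item By the forward direction, applied inductively at the smaller position $x-n_j$, we have $\mathcal{SG}(x-n_j)=1$, because $(x-n_j)-n_1$ has value $0$.
\item But $x-n_j$ is an option of $x$, so $x$ has an option of value $1$. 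This contradicts $\mathcal{SG}(x)=1$.
\end{enumerate}
Therefore $\mathcal{SG}(x-n_1)=0$.

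The only delicate point is bookkeeping in the induction: the forward direction must be available at positions smaller than $x$ when it is used in the converse. Proving both directions simultaneously by strong induction on $x$ handles this. The base cases $x<n_1$ are immediate: $x-n_1$ is not a position, and $\mathcal{SG}(x)=0\neq 1$, so both sides of the equivalence are false.
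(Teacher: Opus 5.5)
Your proof is correct and is the standard argument for Ferguson's pairing property. The paper does not prove this proposition itself (it only cites Ferguson), but your first direction is the same induction as the $a=0$ case in the paper's proof of \textsc{nim}-periodicity, with $n_1$ in the role of the move $(0,-b)$. Running both directions in one strong induction, so that each can call the other at smaller positions, is what is needed. The minimality of $n_1$ is what guarantees that $x-n_1$ is a legal position, and also, via the induction hypothesis in the first direction, that $x-n_i-n_1$ is.
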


In $L(b;x_1,y_1;x_2,y_2)$, we can hear the vector $(0,-b)$ as the `smallest' move in the lexicographic order.  If we do, then \textsc{Nim}-periodicity rhymes with the Ferguson pairing property.

The idea behind \textsc{nim}-periodicity applies to a larger class of games, so we also define it in a broader context. To see how the general version is applied to Lengyel transfer games, we first redefine vector games in terms of partial functions.  In particular, if a vector game is defined by $S=\{u_i\}_{i\in I}\subseteq \mathbb{Z}^n$, then we can instead think of each $u_i$ as a partial function $f_i$ on $\mathbb{N}^n$, which adds $u_i$ when possible.  Then the options of any position $v$ are precisely $\{f_i(v)\mid i\in I \text{ and }v\in \dom f_i\}$. So in $L(b;x_1,y_1;x_2,y_2)$, the partial functions are $f_1(x,y)=(x-x_1,y+y_1), f_2(x,y)=(x-x_2,y+y_2), f_3(x,y)=(x,y-b)$, with, for example, $\dom f_3=\{(x,y)\mid y\geq b\}$. 

\begin{defi}
    Suppose $R$ is an impartial ruleset where the options of any game position $x\in R$ are given by the partial functions $\{f_i\}_{i\in I}$.  We say $R$ is \emph{\textsc{nim}-periodic in $f_i$} if for all $x\in \dom f_i$, $\mathcal{SG}(f_i(x))=\mathcal{SG}(x)\oplus 1$. In particular, $\mathcal{SG}(f_i(f_i(x)))=\mathcal{SG}(x)$.
\end{defi}

\begin{lem}[Generalized \textsc{Nim}-Periodicity]
    Suppose $R$ is an impartial ruleset where the options of any game position $x\in R$ are given by the partial functions $\{f_i\}_{i\in I}$.  Suppose there is $i\in I$ with the following properties:
    \begin{enumerate}
        \item[(i)] for all $j\in I$, $f_i\circ f_j=f_j\circ f_i$ when defined
        \item[(ii)] for all $x\in R$, if $f_i(x)$ exists, then for all $j\in I, j\neq i$, 
        \[f_j(x)\text{ exists }\iff f_j(f_i(x))\text{ exists}\iff f_i(f_j(x))\text{ exists}\]
    \end{enumerate}
    then $R$ is \textsc{nim}-periodic in $f_i$
\end{lem}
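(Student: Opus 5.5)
We argue by induction on positions with respect to the (well-founded) option relation: the ruleset is assumed non-loopy, so every option of $x$ may be taken as "smaller" than $x$. Fix $x\in\dom f_i$ and set $y=f_i(x)$. We show $\mathcal{SG}(y)=\mathcal{SG}(x)\oplus 1$, assuming the statement for all positions reachable from $x$ (in particular for $y$ and for each $f_j(x)$).

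\emph{Step 1: describe the options of $y$ in terms of those of $x$.} Let $J=\{j\neq i : x\in\dom f_j\}$ and $A=\{\mathcal{SG}(f_j(x)) : j\in J\}$. The options of $x$ are $y$ together with $\{f_j(x)\}_{j\in J}$, so $a:=\mathcal{SG}(x)=\mex(A\cup\{c\})$, where $c:=\mathcal{SG}(y)$. By (ii), for $j\neq i$, $f_j(y)$ exists iff $j\in J$, and in that case $f_i(f_j(x))$ exists too. By (i) we then have $f_j(y)=f_j(f_i(x))=f_i(f_j(x))$. Since $f_j(x)$ is an option of $x$ lying in $\dom f_i$, the induction hypothesis gives $\mathcal{SG}(f_j(y))=\mathcal{SG}(f_j(x))\oplus1$. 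The remaining possible option of $y$ is $f_i(y)$, when it exists. Because $y$ is an option of $x$, induction gives $\mathcal{SG}(f_i(y))=c\oplus 1$. Hence
\[c=\mex\bigl((A\oplus 1)\cup B\bigr),\qquad B\subseteq\{c\oplus 1\}.\]

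\emph{Step 2: a purely combinatorial mex argument.} This is the only real content. Let $k$ be least such that the pair $\{2k,2k+1\}$ is not contained in $A$. Since $A$ and $A\oplus1$ contain exactly the same complete pairs, every $t<2k$ lies in both $A$ and $A\oplus1$. Therefore $c\ge 2k$. Also $c\le 2k+1$, since $(A\oplus1)\cup B$ contains at most one element of the pair besides what $A\oplus 1$ contributes. Consequently $a\geq 2k$ as well. We now split into two cases.

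If neither $2k$ nor $2k+1$ lies in $A$, then the same holds for $A\oplus 1$. So $c\in\{2k,2k+1\}$, and in $A\cup\{c\}$ the pair $\{2k,2k+1\}$ contains only $c$. Hence $a=\mex(A\cup\{c\})=c\oplus1$.

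If exactly one of them, say $2k+e$, lies in $A$, then $A\oplus1$ contains $2k+1-e$ but not $2k+e$. Since $c\notin (A\oplus 1)\cup B$, we cannot have $c=2k+1-e$, so $c=2k+e$; this value is consistent, since all smaller values are covered. Then $A\cup\{c\}$ contains $2k+e$ but not $2k+1-e$, so $a=2k+1-e=c\oplus1$.

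In both cases $\mathcal{SG}(f_i(x))=\mathcal{SG}(x)\oplus1$, which completes the induction. The main subtlety is the self-referential option $f_i(y)$ with value $c\oplus1$: one must check that it never changes the mex, and the pair bookkeeping above handles exactly this point. The original \textsc{nim}-periodicity lemma is recovered by taking $f_i=f_3$, the move $(0,-b)$.
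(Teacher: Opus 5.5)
Your proof is correct and is essentially the paper's argument: both run the same induction over $\dom f_i$ and use (i) and (ii) to identify the non-$f_i$ options of $f_i(x)$ with $f_i$ applied to the non-$f_i$ options of $x$, then finish with mex bookkeeping. The only difference is the final case split (the paper splits on the parity of $\mathcal{SG}(f_i(x))$, you split on how $A$ meets its first incomplete pair $\{2k,2k+1\}$), and your stated reason for $c\le 2k+1$ needs one fix: as written it would allow one element from $A\oplus 1$ and one from $B$ to fill the pair, whereas the right reason is that the only possible element of $B$ is $c\oplus 1$, which lies in the same pair as $c$, so if $c\ge 2k+2$ then $B$ cannot supply the element of $\{2k,2k+1\}$ missing from $A\oplus 1$.
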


Note that all vector games satisfy the first condition, and that for $L(b;x_1,y_1;x_2,y_2)$, the second condition is satisfied by $f_i(x,y)=(x,y-b)$, showing that, as we have seen, Lengyel transfer games are \textsc{nim}-periodic in $(0,-b)$.

Every part of condition (ii) is necessary.  For example, consider the game $\{(0, -1), (-2, 1)\}$, which is \textsc{nim}-periodic in $(0,-1)$.  Suppose we say that a player cannot do the move $(0, -1)$ if the $y$-value is a multiple of 3. This no longer satisfies 
\[f_j(f_i(x)) \text{ exists} \iff f_i(f_j(x)) \text{ exists},\]
and we lose \textsc{nim}-periodicity. 

\begin{proof}
    Suppose $R$ and $f_i$ are as stated.  Consider $x\in R$ so that $f_i(x)$ exists. The conditions of the lemma imply that, for all $x$, $\{\mathcal{SG}(f_j(f_i(x)))\}_{j\neq i}=\{\mathcal{SG}(f_i(f_j(x)))\}_{j\neq i}$.   We proceed by structural induction on $x\in \dom f_i$.  Fix such an $x$, and let  $a=\mathcal{SG}(f_i(x))$. By induction,
    \begin{enumerate}[(A)]
        \item for all $j$, if $f_i(f_j(x))$ exists, then $\mathcal{SG}(f_i(f_j(x)))=\mathcal{SG}(f_j(x))\oplus 1$; and
        \item $\mathcal{SG}(f_i(f_i(x)))$, if it exists, is $a\oplus 1$.
    \end{enumerate}

    If $a$ is even, $a\oplus 1=a+1$, so by (B), $\{\mathcal{SG}(f_j(f_i(x)))\}_{j\neq i}=\{\mathcal{SG}(f_i(f_j(x)))\}_{j\neq i}$ contains $\{0,1,\ldots, a-1\}$ and not $a$.  Then by (A), $\{\mathcal{SG}(f_j(x))\}_{j\in I}$ contains $\{0\oplus 1,1\oplus 1,\ldots, (a-1)\oplus 1\}=\{0,1,\ldots, a-1\}$ and not $a\oplus 1=a+1$.  But then $\mathcal{SG}(f_i(x))=a$, so  $\mathcal{SG}(x)=a+1=a\oplus 1$.
    
    If $a$ is odd, $a\oplus 1=a-1$.  Whether or not $\mathcal{SG}(f_i(f_i(x)))$ exists, $\{\mathcal{SG}(f_j(f_i(x)))\}_{j\neq i}=\{\mathcal{SG}(f_i(f_j(x)))\}_{j\neq i}$ contains $\{0,1,\ldots, a-2\}$ and not $a$.  Then by (A), $\{\mathcal{SG}(f_j(x))\}_{j\in I}$ contains $\{0\oplus 1,1\oplus 1,\ldots, (a-2)\oplus 1\}=\{0,1,\ldots, a-2\}$ and not $a\oplus 1=a-1$.  Also, $\mathcal{SG}(f_i(x))=a$, so  $\mathcal{SG}(x)=a-1=a\oplus 1$.
\end{proof}

\begin{example}\label{example nim-periodicity}
    The vector game defined by 
    \[\{(-3,0,5),(-2,1,0),(-1,1,1),(0,-3,0),(0,0,-4)\}\]
    is \textsc{nim}-periodic in $(0,-3,0)$ and in $(0,0,-4)$.
\end{example}

It is a direct consequence of the Sprague-Grundy theory that if every position of an impartial game has at most $k$ options, then the the maximum \textsc{nim}-value is at most $k$. In the proof of Theorem \ref{Theorem: 2 vector case}, we see a two-move game whose maximum \textsc{nim}-value is only $1$. In fact, this is not an accident, but a direct consequence of \textsc{nim}-periodicity.  That is, the positions in the first $b$ rows each only have one option, so these values can only be $0$ or $1$.  Then by induction and \textsc{nim}-periodicity, the remaining rows can only have values from $\{0,1\}$ or $\{0\oplus 1,1\oplus 1\}=\{0,1\}$.  More generally, we have the following.

\begin{lem}
    Suppose $R$ is an impartial ruleset where the options of any game position $x\in R$ are given by the partial functions $\{f_i\}_{1\leq i\leq k}$.  Suppose further that $R$ is \textsc{nim}-periodic in $f_i$ for all $1\leq i\leq l$.  Then the maximum \textsc{nim}-value for $R$ is 
    \[\max\{k-l,k-l\oplus1\}\]
\end{lem}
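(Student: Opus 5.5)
The plan is to prove the upper bound $\mathcal{SG}(x)\leq M$ for every position $x$, where $M=\max\{k-l,(k-l)\oplus 1\}$. The proof is by structural induction on positions of $R$, exactly as in the two-move discussion preceding the statement. The whole argument rests on one arithmetic observation: $M$ is always odd. If $k-l$ is even then $M=(k-l)\oplus1=k-l+1$, and if $k-l$ is odd then $M=k-l$. Consequently the set $\{0,1,\ldots,M\}$ is a union of pairs $\{2m,2m+1\}$, so it is closed under $a\mapsto a\oplus 1$. I will record this first as a small remark.

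Fix a position $x$ and assume the bound holds for all positions reachable from $x$; options of $x$ are strictly earlier in the (well-founded) game order. There are two cases.
\begin{enumerate}[(1)]
\item Suppose $x\in\dom f_i$ for some $1\leq i\leq l$. Then $f_i(x)$ is an option of $x$, so by the inductive hypothesis $\mathcal{SG}(f_i(x))\leq M$. Since $R$ is \textsc{nim}-periodic in $f_i$, applying the definition at the position $x$ gives $\mathcal{SG}(f_i(x))=\mathcal{SG}(x)\oplus1$, hence $\mathcal{SG}(x)=\mathcal{SG}(f_i(x))\oplus 1$. By closure of $\{0,\ldots,M\}$ under $\oplus1$, we get $\mathcal{SG}(x)\leq M$.
\item Suppose $x\notin\dom f_i$ for every $i\leq l$. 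Then the only options of $x$ come from $f_{l+1},\ldots,f_k$, so $x$ has at most $k-l$ options. The standard Sprague--Grundy fact then gives $\mathcal{SG}(x)\leq k-l\leq M$.
\end{enumerate}
Terminal positions fall under case (2), so they serve as the base case automatically.

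Essentially nothing here is hard. The only point needing care is the direction in which \textsc{nim}-periodicity is applied: we use it in the form ``value of $x$ is determined by the value of its option $f_i(x)$'', which is legitimate because the definition holds for all $x\in\dom f_i$. One should also check that this is not circular, which it is not, since $f_i(x)$ precedes $x$.

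If the statement's ``is'' is meant as an exact maximum rather than an upper bound, equality would additionally require exhibiting a position attaining $M$. This does not hold for an arbitrary ruleset: for instance, $L(b;x_1,y_1)$ has $k-l=1$, $M=1$, which happens to be attained, but degenerate rulesets can fail. So I would present the result as the bound $\mathcal{SG}\leq\max\{k-l,(k-l)\oplus1\}$, noting that it is attained in the examples of interest, such as the value $3$ appearing in Figure \ref{fig:L(2;3,0;1,1)} when $k=3$, $l=1$.
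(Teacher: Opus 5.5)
Your proof is correct and follows the paper's argument: positions outside every $\dom f_i$ with $i\le l$ have at most $k-l$ options, and \textsc{nim}-periodicity carries the bound to all other positions, which you make rigorous through the induction and the observation that $\{0,\ldots,M\}$ is closed under $a\mapsto a\oplus 1$. Your reading of ``maximum'' as an upper bound is also right, since that is all the paper's proof actually establishes.
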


So for example, in Example \ref{example nim-periodicity}, $k=5$ and $l=2$, so the maximum \textsc{nim}-value is $3$.
%

\begin{proof}
    Let $S=\{x\in R\mid x\notin \dom f_i,\text{ for all }1\leq i\leq l\}$.  Then the maximum \textsc{nim}-value of any element of $S$ is $k-l$.  Then by \textsc{nim}-periodicity, the maximum \textsc{nim}-value of any element of $R$ is $\max\{k-l,k-l\oplus1\}$.
\end{proof}

\section{Two-Move Games, Diagonal Periodicity, and $2$-Columns}\label{section-diagonal periodicity}
 In \cite{Larsson}, Larsson, Saha, and Yokoo made an extensive study of two- and three-move vector subtraction games.  In particular, they proved the following.

\begin{lem}\cite{Larsson}\label{Larssons lemma}
    In the two-move vector subtraction game defined by the vectors $\{-u, -v\}$, for any position $x$, the outcome class of $x$ is the same as that of $x+u+v$. 
\end{lem}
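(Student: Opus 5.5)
Since the statement concerns outcome classes only, I would reduce everything to the set $\mathcal{P}$ of $\mathcal{P}$-positions and prove the sharper claim that $x\in\mathcal{P}$ if and only if $x+u+v\in\mathcal{P}$, for every $x\in\mathbb{N}^n$. The argument is a strong induction on $x$ with respect to a monovariant that decreases under both moves. The sum of coordinates works in the subtraction setting if $u,v\neq 0$. The ingredients are only the recursive characterization of $\mathcal{P}$ and the observation that $x+u+v$ has exactly two options, $x+v$ and $x+u$, and both always exist because $u,v\in(\mathbb{N})^n$.

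The key steps, in order, are as follows. First I record that for $y=x+u+v$ the options are $y-u=x+v$ and $y-v=x+u$, so $y\in\mathcal{P}$ iff neither $x+u$ nor $x+v$ is in $\mathcal{P}$. Second, I handle the forward direction. Assume $x\in\mathcal{P}$. Then $x+u$ has the option $x$, which is in $\mathcal{P}$, so $x+u\notin\mathcal{P}$. Similarly $x+v\notin\mathcal{P}$. Hence $x+u+v\in\mathcal{P}$, and this direction needs no induction at all.

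Third, I handle the converse by showing that $x\notin\mathcal{P}$ implies $x+u+v\notin\mathcal{P}$. Since $x\notin\mathcal{P}$, some option of $x$ is in $\mathcal{P}$, say $x-u\in\mathcal{P}$ (the case $x-v$ is symmetric). By the induction hypothesis applied to $x-u$, which is smaller than $x$, the position $(x-u)+u+v=x+v$ is in $\mathcal{P}$. Hence $x+u+v$ has a $\mathcal{P}$-option, namely $x+v$, and so it is not in $\mathcal{P}$. In fact only the forward direction is invoked inductively, so the induction can be organized as: the forward direction holds outright, and the converse follows from the forward direction applied to $x-u$. This means the induction is essentially trivial once the converse is phrased correctly.

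The only real obstacle is bookkeeping about existence of options, since the statement lives on $\mathbb{N}^n$ and moves must stay there. I would check that the options of $x+u+v$ always exist (they do, since $x+u$ and $x+v$ lie in $\mathbb{N}^n$), and that in step three I use an option $x-u$ of $x$ only when it exists. That is exactly the hypothesis "some option of $x$ is in $\mathcal{P}$". I would also note that if $x$ is terminal, it lies in $\mathcal{P}$, so the forward direction covers it. I would remark that the same argument does not give equality of \textsc{nim}-values, which is why the paper needs the refined versions developed next.
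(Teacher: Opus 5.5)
Your proof is correct. The paper does not prove this statement itself (it is quoted from Larsson, Saha, and Yokoo). The nearest argument is the proof of the refinement, Lemma \ref{lemma: refinement of Larsson}, whose value-$0$ case is exactly your forward direction.

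The routes differ on the converse. The paper splits the non-$\mathcal{P}$ positions by \textsc{nim}-value. For value $1$ it argues by contradiction: it supposes $\mathcal{SG}(x+u)=\mathcal{SG}(x+v)=2$, deduces $\mathcal{SG}(x+u-v)=\mathcal{SG}(x+v-u)=0$, and concludes that neither $x-u$ nor $x-v$ is a $0$-option of $x$. Value $2$ is then reduced to the first two cases. You instead push the $\mathcal{P}$-option $x-u$ of $x$ forward to the $\mathcal{P}$-option $x+v$ of $x+u+v$, which is shorter and needs no induction.

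Your closing remark undersells this. The same push-forward proves the value-$1$ case of Lemma \ref{lemma: refinement of Larsson} directly: $\mathcal{SG}(x+v)=0$, while $\mathcal{SG}(x+u)\neq 1$ because $x$ is an option of $x+u$, so $\mathcal{SG}(x+u+v)=1$. The value-$2$ case then follows as in the paper.

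What the paper's formulation buys is that it isolates the single obstruction, $\mathcal{SG}(x+u)=\mathcal{SG}(x+v)=2$, which is precisely a bad pair of $2$s. This matters in Lemma \ref{lem-no bad pairs of twos implies diagonal periodicity}. There, a value-$1$ position $(x,y)$ with $y\geq b$ may have $(x,y-b)$ as its only $0$-option, so your push-forward is unavailable, and the paper instead assumes that obstruction away.
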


Since the maximum \textsc{nim}-value of a two-move impartial game is $2$, the following refinement might not seem significant.  However, our analysis of Lengyel transfer games will rely on it. 
Furthermore, \textsc{nim}-periodicity already shows that it is useful to group our \textsc{nim}-values as $\{0,1\mid 2, 3\}$ rather than as $\{0\mid 1, 2, 3\}$.

\begin{lem}[Two-move periodicity]\label{lemma: refinement of Larsson}
    The two-move vector subtraction game defined by the vectors $\{-u, -v\}$ is periodic in $u+v$. 
\end{lem}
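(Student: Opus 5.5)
We prove by induction on the position $x$ (ordered by the sum of components, or any monovariant decreasing under both moves) that $\mathcal{SG}(x+u+v)=\mathcal{SG}(x)$ for every $x\in\mathbb{N}^n$. Write $w=x+u+v$. The options of $w$ are $w-u=x+v$ and $w-v=x+u$, each present exactly when it lies in $\mathbb{N}^n$. Since $x\in\mathbb{N}^n$ and $u,v\geq 0$ componentwise, both $x+u$ and $x+v$ lie in $\mathbb{N}^n$. So $w$ always has exactly two options, and
\[\mathcal{SG}(w)=\mex\{\mathcal{SG}(x+u),\mathcal{SG}(x+v)\}.\]

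We compare these two values with the options of $x$, namely $x-u$ and $x-v$ (when legal). The options of $x+v$ are $x+v-u$ (when legal) and $x$. The options of $x+u$ are $x$ and $x+u-v$ (when legal). By induction, $\mathcal{SG}(x+v-u)=\mathcal{SG}(x-u)$ whenever $x-u\in\mathbb{N}^n$, because $x+v-u=(x-u)+u+v$ and $x-u$ is smaller than $x$. The main difficulty is the case where $x+v-u\in\mathbb{N}^n$ but $x-u\notin\mathbb{N}^n$. There the option $x+v-u$ has no counterpart among the options of $x$, so the simple dictionary between options breaks down.

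To handle this, we set $a=\mathcal{SG}(x)$ and split into cases, exploiting the fact that every value is at most $2$.

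If $a=0$, then $x+u$ and $x+v$ both have $x$ as an option, so both values are nonzero and $\mathcal{SG}(w)=0$.

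If $a=2$, then both $x-u$ and $x-v$ exist, with values $\{0,1\}$ in some order. By induction $\mathcal{SG}(x+v-u)=\mathcal{SG}(x-u)$ and $\mathcal{SG}(x+u-v)=\mathcal{SG}(x-v)$. It follows that $\{\mathcal{SG}(x+v),\mathcal{SG}(x+u)\}=\{0,1\}$. For instance, if $x-u$ has value $0$, then $x+v$ has options with values $0$ and $2$, so $\mathcal{SG}(x+v)=1$, while $x+u$ has options with values $2$ and $1$, so $\mathcal{SG}(x+u)=0$. Hence $\mathcal{SG}(w)=2$.

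If $a=1$, we need $0$ among $\{\mathcal{SG}(x+u),\mathcal{SG}(x+v)\}$ and $1$ absent. Since $x$ has value $1$, some option of $x$, say $x-u$, has value $0$, and no option of $x$ has value $1$. Then $x+v-u$ exists and has value $0$ by induction, so $x+v$ has option values $\{0,1\}$ and $\mathcal{SG}(x+v)=2$. For $x+u$, the options are $x$ (value $1$) and possibly $x+u-v$. If $x-v$ exists, it has value $0$ or $2$ (not $1$), and by induction so does $x+u-v$; then $\mathcal{SG}(x+u)$ is $2$ or $0$. If $x-v$ does not exist, $x+u-v$ may still exist, and this is the problematic case. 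We must then show that $x+u-v$ does not have value $0$, for otherwise $\mathcal{SG}(x+u)=2$ and $\mathcal{SG}(w)=0$.

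In this case we intend to use that $x+u-v=(x-v)+u$ with $x-v\notin\mathbb{N}^n$. We plan to argue by a second induction along the chain $x+u-v,\ x+2u-2v,\dots$, or to recast it as showing $\mathcal{SG}(y+u)\neq 0$ whenever $y-u$ has value $0$ and $y\notin\mathbb{N}^n$. Failing that, we can avoid the issue altogether by strengthening the induction hypothesis to a statement about outcome classes plus the value $2$, via Lemma \ref{Larssons lemma}. Lemma \ref{Larssons lemma} already gives that $\mathcal{P}$-positions are preserved, so $\mathcal{SG}(x)=0\iff\mathcal{SG}(w)=0$. It then remains only to show that the value $2$ is preserved in both directions. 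The forward direction is the $a=2$ case above. For the converse, suppose $\mathcal{SG}(w)=2$; then both options $x+u$ and $x+v$ have values $0$ and $1$. Using Lemma \ref{Larssons lemma} backward on the $0$-valued option $x+v$ (say), we get that $x+v-u-v=x-u$ exists and is a $\mathcal{P}$-position. Similarly we would argue that $x-v$ exists and has value $1$, which forces $\mathcal{SG}(x)=2$. We expect this last symmetric step to be the main obstacle.
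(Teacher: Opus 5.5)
There is a genuine gap, and it starts with an arithmetic slip that shapes the rest of the argument. You write $x+v-u=(x-u)+u+v$, but in fact $(x-u)+u+v=x+v$. So the induction hypothesis at $x-u$ and $x-v$ gives $\mathcal{SG}(x+v)=\mathcal{SG}(x-u)$ and $\mathcal{SG}(x+u)=\mathcal{SG}(x-v)$. It links the options of $w$ directly to the options of $x$; it says nothing about $x+v-u$, and your version is false in general.

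Take the one-dimensional game with $u=1$, $v=2$, where $\mathcal{SG}(m)=m \bmod 3$, and let $x=1$. Then $\mathcal{SG}(x-u)=0$ but $\mathcal{SG}(x+v-u)=\mathcal{SG}(2)=2$. Your conclusion $\mathcal{SG}(x+v)=2$ in the case $a=1$ also fails, since $\mathcal{SG}(3)=0$. This $x$ is exactly your ``problematic case'' ($x-v\notin\mathbb{N}$ while $x+u-v=0\in\mathbb{N}$). The claim you set out to prove there, that $x+u-v$ is not a $\mathcal{P}$-position, is false. So the planned second induction along $x+u-v, x+2u-2v,\dots$ cannot succeed; the obstacle is an artifact of the slip.

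The $a=2$ computation rests on the same false identity. It lands on the correct set $\{0,1\}$, but for $x=2$ in this example it assigns the values $0$ and $1$ to $x+v$ and $x+u$ the wrong way round. The fallback is also unfinished. Lemma \ref{Larssons lemma} only compares positions already in $\mathbb{N}^n$, so it cannot establish that $x-u$ exists, and the step you call the main obstacle is left open. That step is in fact immediate: $x$ is an option of both $x+u$ and $x+v$, which carry the values $0$ and $1$, so $\mathcal{SG}(x)=2$.

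Once the identity is corrected, there is no boundary issue at all.
\begin{itemize}
\item If $\mathcal{SG}(x)=1$ with, say, $\mathcal{SG}(x-u)=0$, then $\mathcal{SG}(x+v)=0$ by induction. Also $\mathcal{SG}(x+u)\neq 1$, because $x$ is an option of $x+u$. Hence $\mathcal{SG}(w)=\mex\{0,\mathcal{SG}(x+u)\}=1$.
\item If $\mathcal{SG}(x)=2$, then $\{\mathcal{SG}(x+v),\mathcal{SG}(x+u)\}=\{\mathcal{SG}(x-u),\mathcal{SG}(x-v)\}=\{0,1\}$. This is how the paper concludes its last case.
\end{itemize}
For $\mathcal{SG}(x)=1$ the paper argues without induction. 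Both $\mathcal{SG}(x+u)$ and $\mathcal{SG}(x+v)$ lie in $\{0,2\}$. If both were $2$, then $x+u-v$ and $x+v-u$ would be $\mathcal{P}$-positions. Since $x-v$ is an option of $x+u-v$ and $x-u$ is an option of $x+v-u$, neither $x-u$ nor $x-v$ could have value $0$, contradicting $\mathcal{SG}(x)=1$. That is the idea missing from your treatment of the point $x+u-v$: relate it to $x-v$ as one of its options, not as a translate under $u+v$.
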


\begin{proof}
    In a two move game, the only possible \textsc{nim}-values are $0$, $1$, and $2$.  Use Figure \ref{Proof of two move lemma} to aid visualization.

    If $\mathcal{SG}(x)=0$, then $\mathcal{SG}(x+v)$ and $\mathcal{SG}(x+u)$ are both positive.  Hence $\mathcal{SG}(x+u+v)=0$.

    If $\mathcal{SG}(x)=1$, then $\mathcal{SG}(x+u), \mathcal{SG}(x+v) \in \{0,2\}$.  We claim that at least one of these values is $0$.  Indeed if $\mathcal{SG}(x+u)= \mathcal{SG}(x+v)=2$, then $\mathcal{SG}(x+u-v)= \mathcal{SG}(x+v-u)=0$.  However, this implies that neither of $\mathcal{SG}(x-u)$ or $\mathcal{SG}(x-v)$, if they exist, are zero, contradicting that $\mathcal{SG}(x)=1$.

    If $\mathcal{SG}(x)=2$, then $\{\mathcal{SG}(x-u), \mathcal{SG}(x-v)\} = \{0,1\}$.  So by the previous two cases, $\{\mathcal{SG}(x+v), \mathcal{SG}(x+u)\} = \{0,1\}$.  Thus $\mathcal{SG}(x+u+v)=2$.   
\end{proof}

\begin{figure}[h]
    \centering
    \begin{tikzpicture}[scale=0.7] 
    \node[black] (A)  at (0,0) {$x$};
    \node[black] (B) at (4,0.4) {$x+u$};
    \node[black] (C) at (0.6,1.1) {$x+v$};
    \node[black] (D) at (4.6,1.5) {$x+u+v$};
    \node[blue] (E) at (-3.4,0.7) {$x+v-u$};
    \node[blue] (F) at (3.4,-0.7) {$x+u-v$};
    \node[red] (G) at (-4,-0.4) {$x-u$};
    \node[red] (H) at (-0.6,-1.1) {$x-v$};
    \draw [->] (A) -- (B);
    \draw [->] (A) -- (C);
    \draw [->] (B) -- (D);
    \draw [->] (C) -- (D);
    \draw [->] (G) -- (E);
    \draw [->] (H) -- (F);
    \draw [->] (E) -- (C);
    \draw [->] (F) -- (B);
    \draw [->] (G) -- (A);
    \draw [->] (H) -- (A);
    \end{tikzpicture}
    \caption{Proof of Lemma \ref{lemma: refinement of Larsson}}
    \label{Proof of two move lemma}
\end{figure}

Note that the $0$ case follows without looking at any options of $x$, the $1$ case requires looking only at the immediate options of $x$, and the $2$ case follows from the other two cases.  In other words, games may exhibit this behavior even if they are only `locally' a $2$-move game.

Due to \textsc{nim}-periodicity, many Lengyel transfer games $L(b;x_1,y_1;x_2,y_2)$ exhibit a similar periodicity as the two-move vector game obtained if the move $(-b,0)$ were not available.

\begin{defi}
    For any $L(b;x_1,y_1;x_2,y_2)$, let $\mathcal{SG}^*(x,y)=\begin{cases}
        \mathcal{SG}(x,y) & \mathcal{SG}(x,y)=0,1,2\\
        2 & \mathcal{SG}(x,y)=3\\
    \end{cases}$
\end{defi}

\begin{defi}\label{def: diagonal periodicity}
    Consider the array of \textsc{nim}-values of a particular $L(b;x_1,y_1;x_2,y_2)$.  We say it has \emph{diagonal periodicity} if, for all $(x,y)$ with $y\geq y_1+y_2$, we have $\mathcal{SG}^*(x,y)=\mathcal{SG}^*(x+x_1+x_2,y-y_1-y_2)$.  Similarly, we say it has diagonal periodicity after some $x_0$ if the condition holds for all $x\geq x_0$.
\end{defi}

In other words, $L(b;x_1,y_1;x_2,y_2)$ is diagonally periodic if applying both of the moves $(-x_1,y_1)$ and $(-x_2,y_2)$ does not change the \textsc{nim}-value, except possibly switching $2$s and $3$s.  In this case, Corollary \ref{corollary: 2s vs 3s} shows us when these switches must occur: between alternating blocks of $b$ rows.  Refer to Figure \ref{fig:L(2;3,0;1,1)}, where the `diagonal direction' in question is $(4,-1)$.  Note that we are now mentally partitioning \textsc{nim}-values as $\{0\mid 1\mid 2,3\}$.  We will prove a much stronger version of the following in Section \ref{section - eventual diagonal periodicity}, but for now, we lay down some necessary groundwork.

\begin{prop}\label{prop: diagonal periodicity if a>2c}
    If $x_1\geq 2x_2$ or $x_2\geq 2x_1$, then $L(b;x_1,y_1;x_2,y_2)$ is diagonally periodic.
\end{prop}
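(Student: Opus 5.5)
The plan is to imitate the proof of Lemma~\ref{lemma: refinement of Larsson}, using \textsc{nim}-periodicity to take care of the third move $(0,-b)$. By symmetry we may assume $x_1\geq 2x_2$. Write $u=(x_1,-y_1)$ and $v=(x_2,-y_2)$. For $P=(x,y)$ with $y\geq y_1+y_2$, let $Q=P+u+v$.

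The two transfer options of $Q$ are $Q+(-x_1,y_1)=P+v$ and $Q+(-x_2,y_2)=P+u$. This is exactly the square $x,\ x+u,\ x+v,\ x+u+v$ of Figure~\ref{Proof of two move lemma}. By Corollary~\ref{cor: y can be reduced mod 2b}, \textsc{nim}-periodicity, and Corollary~\ref{corollary: 2s vs 3s}, the remaining option $(0,-b)$ of any position $R$ always has value $\mathcal{SG}(R)\oplus 1$. Hence whether $\mathcal{SG}(R)\in\{0,1\}$ or $\{2,3\}$, and which of $0$ or $1$ it is, depends only on which of $0,1$ appear among the values of the transfer options of $R$. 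The observation to establish first is that $\mathcal{SG}^*(R)$ is a function of the set of $\mathcal{SG}^*$-values of the two transfer options of $R$ together with the row class of $R$ modulo $2b$. I will prove this by induction on rows, using the \textsc{nim}-periodicity lemma. Since $P$ and $Q$ differ by a vertical shift of $y_1+y_2$, the row class must be tracked. In $\mathcal{SG}^*$ the value $0\leftrightarrow 1$ swap caused by crossing a block of $b$ rows has to be matched on both sides, and I expect this to follow because the same vertical shift is applied to $P$, to its options, and to $Q$ and its options.

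With this in hand, I will run an induction on $x$, in lexicographic order, proving $\mathcal{SG}^*(P)=\mathcal{SG}^*(Q)$, with cases $\mathcal{SG}(P)\in\{0,1\}$ and $\mathcal{SG}(P)\in\{2,3\}$, as in the two-move lemma. For the $\{2,3\}$ case, both $0$ and $1$ appear among the transfer options of $P$. By the induction hypothesis applied to those options (they are $P-u$ and $P-v$, shifted appropriately), both also appear at $P+v$ and $P+u$, so $Q\in\{2,3\}$.

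The case where $P$ has value $0$ or $1$ needs the "cross" positions $P+u-v$ and $P+v-u$. I must show that the options $P+u$ and $P+v$ of $Q$ cannot both avoid the required small value, exactly as in the value-$1$ case of Lemma~\ref{lemma: refinement of Larsson}. This is where the hypothesis $x_1\geq 2x_2$ enters. The position $P+u-v$ has first coordinate $x+x_1-x_2\geq x+x_2$, so it lies weakly to the right of $P+v$, and the relevant cross relations follow from the induction hypothesis applied at columns strictly smaller than that of $Q$. Without the inequality, these auxiliary positions could lie in columns where diagonal periodicity has not yet been established, and the induction would become circular.

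The main obstacle is the boundary, where $P$ lacks one or both transfer options ($x<x_2$ or $x<x_1$) while $Q$ has both. Here I will compute directly. For $x<x_2$, the values in the column block $x<x_2$ are simply $b$ zeros followed by $b$ ones, repeated. I will use $x_1\geq 2x_2$ to guarantee that $P+v$ and $P+u$ fall into columns whose values are determined by this initial block and by the already-proved cases, and then check that these two options produce the same $\mathcal{SG}^*$ value at $Q$. I would try the subcase $x_2\leq x<x_1$ first, since there $P$ behaves like a position of the two-move game $L(b;x_2,y_2)$, and Theorem~\ref{Theorem: 2 vector case} describes it explicitly.
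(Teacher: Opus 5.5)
\textbf{There is a genuine gap in the value-$0$/$1$ case.} You correctly identify that everything reduces to showing $P+u$ and $P+v$ are not both $2/3$, i.e.\ that they do not form a bad pair of $2$s with common option $P$. You propose to do this ``exactly as in the value-$1$ case'' of Lemma~\ref{lemma: refinement of Larsson}, and that argument does not transfer.

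Suppose $P+u$ and $P+v$ are both $2/3$. Their transfer options must take the values $\{0,1\}$, so $\mathcal{SG}(P+u-v)=\mathcal{SG}(P+v-u)=\mathcal{SG}(P)\oplus 1$. Hence neither transfer option of $P$ has value $0$ or $1$. In the two-move game this contradicts the value of $P$. Here it does so only when $\lfloor y_P/b\rfloor\equiv\lfloor y_Q/b\rfloor \pmod 2$. Since $y_P=y_Q+y_1+y_2$, that parity differs for some rows, and then nothing is contradicted.

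For example, take $0\le y_Q<b$, $\mathcal{SG}(P)=1$ and $b\le y_P<2b$. The value $1$ of $P$ is then witnessed by $P-(0,b)$, which has value $0$. Your expectation that the $0\leftrightarrow 1$ swap ``is matched on both sides'' is exactly what fails.

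The induction hypothesis does not rescue this. Diagonal periodicity at $P-u$, $P-v$, $P-u-v$ is consistent with $P-u$ and $P-v$ both being $2/3$; it only reproduces the same configuration one diagonal step back. Using $x_1\ge 2x_2$ merely to order the induction therefore cannot be the operative point. Bad pairs genuinely occur when the hypothesis fails (Figure~\ref{L(b;1,0;1,b-1)}), so the inequality must be used to exclude them.

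\textbf{The missing idea is the $2$-block rule, Lemma~\ref{lemma: ac rule}.} For $x_2\le x_1$, $2$s and $3$s occur only in columns $x_1,\dots,x_1+x_2-1 \bmod (x_1+x_2)$. The two members of a bad pair lie in columns differing by $x_1-x_2$, and $x_1\ge 2x_2$ puts this difference in $[x_2,x_1-1]$. That is too far apart for a single $2$-block of width $x_2$, and too close for two different $2$-blocks, which are at least $x_1+1$ apart. So there are no bad pairs, and Lemma~\ref{lem-no bad pairs of twos implies diagonal periodicity} finishes the proof; this is the paper's argument.

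If you prefer to stay in your own framework, a minimal-counterexample version also works:
\begin{enumerate}
\item Take a bad pair $(R+u,R+v)$ with $R$ leftmost.
\item As above, neither transfer option of $R$ has value in $\{0,1\}$.
\item Since $R+v$ is $2/3$, it has two transfer options, so $x_R\ge x_1-x_2\ge x_2$. Hence $R-v$ exists and is $2/3$.
\item That forces $x_R-x_2\ge x_1$, so $R-u$ also exists and is $2/3$.
\item Then $(R-v,R-u)$ is a bad pair further left, a contradiction.
\end{enumerate}
Step 3 is precisely where $x_1\ge 2x_2$ is needed.

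Finally, the boundary analysis you call the main obstacle is unnecessary. The $0/1$ cases use only that $P$ is an option of $P+u$ and $P+v$, together with the absence of bad pairs. When $\mathcal{SG}(P)\in\{2,3\}$, both transfer options of $P$ automatically exist.
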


In order to prove this proposition, we need another structural fact about Lengyel transfer games.  That is, they follow a pattern where the columns alternate between blocks without $2$s and $3$s, and blocks which might contain them. 

\begin{defi}
    Consider the array of \textsc{nim}-values of $L(b;x_1,y_1;x_2,y_2)$.  A column is called a \emph{$2$-column} if it contains some $2$s (equivalently, some $3$s).
\end{defi}

\begin{lem}[The $2$-block Rule]\label{lemma: ac rule}
Suppose $x_2\leq x_1$.  Then the columns of the \textsc{nim}-array of $L(b;x_1,y_1;x_2,y_2)$ numbered $0,1,\ldots, x_1-1 \bmod (x_1+x_2)$ are not $2$-columns.    
\end{lem}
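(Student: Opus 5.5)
The plan is to argue by strong induction on the column index $x$, after first reducing the notion of ``$2$-column'' to a statement about the first $b$ rows. By \textsc{nim}-periodicity, $\mathcal{SG}(x,y+b)=\mathcal{SG}(x,y)\oplus 1$, so column $x$ contains a $2$ or a $3$ if and only if some $(x,y)$ with $0\le y<b$ has value $2$. By Corollary \ref{corollary: 2s vs 3s}, the entries in these rows lie in $\{0,1,2\}$. For $y<b$ the move $(0,-b)$ is unavailable, so the only options of $(x,y)$ are
\[
A=(x-x_1,y+y_1)\quad\text{and}\quad C=(x-x_2,y+y_2),
\]
when they exist. Hence $\mathcal{SG}(x,y)=2$ forces both $A$ and $C$ to exist, with $\{\mathcal{SG}(A),\mathcal{SG}(C)\}=\{0,1\}$. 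This already settles the base range $0\le x<x_1$: there $A$ does not exist, so the value is at most $1$.

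For the inductive step, write $x=k(x_1+x_2)+r$ with $0\le r<x_1$ and $k\ge 1$, and suppose $\mathcal{SG}(x,y)=2$ with $y<b$. I would use the ``local two-move'' reasoning from the proof of Lemma \ref{lemma: refinement of Larsson}, applied with the two transfer vectors in place of $-u,-v$. The common option $Z=(x-x_1-x_2,\,y+y_1+y_2)$ of $A$ and $C$ lies in column $(k-1)(x_1+x_2)+r$, which is again in the residue range $[0,x_1)$. By induction, $Z$ is therefore not a $2$-column entry, so $\mathcal{SG}^*(Z)\in\{0,1\}$, and \textsc{nim}-periodicity controls the pair $Z$, $Z+(0,b)$. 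The goal is to derive a contradiction from $\{\mathcal{SG}(A),\mathcal{SG}(C)\}=\{0,1\}$ together with $\mathcal{SG}(Z)\in\{0,1\}$. Concretely:
\begin{enumerate}[(i)]
\item whichever of $A$, $C$ has value $0$ forces $\mathcal{SG}(Z)\ne 0$, so $\mathcal{SG}(Z)=1$;
\item the one with value $1$ must then have a $0$-option other than $Z$, namely either its $(0,-b)$ option or its other transfer option;
\item \textsc{nim}-periodicity in column $x-x_1$ or $x-x_2$ should transport that $0$ back to a $1$ or a $0$ at a position which, by induction, contradicts either $\mathcal{SG}(Z)=1$ or the value of the other of $A$, $C$.
\end{enumerate}

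I expect the main obstacle to be the subcase $r<x_2$. There $x-x_2\equiv r+x_1$ and $x-x_1\equiv r+x_2$, so both $A$ and $C$ lie in columns that may be $2$-columns, and the bare statement of the lemma gives no usable induction hypothesis for them. What I would try first is to strengthen the inductive claim. Besides ``columns with residue in $[0,x_1)$ are not $2$-columns,'' I would carry along a pairing statement in the spirit of Ferguson pairing and Lemma \ref{lemma: refinement of Larsson}: for such columns, $\mathcal{SG}(x,y)$ is determined by $\mathcal{SG}(x-x_1-x_2,\,y+y_1+y_2)$, that is, a $0/1$ diagonal shift. The intended consequence is that in each such column $0$'s and $1$'s alternate in blocks compatible with the shift by $y_1+y_2$. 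With that invariant, the values of $A$ and $C$ can be traced back one diagonal step to column residues in $[0,x_1)$, where the strengthened hypothesis applies. The hypothesis $x_2\le x_1$ is exactly what guarantees this traced-back column still has residue in $[0,x_1)$ rather than wrapping into the $2$-block.
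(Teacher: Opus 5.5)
Your reduction to rows $0\le y<b$ and your base range $x<x_1$ are correct. So is your observation that the common option $Z=(x-x_1-x_2,\,y+y_1+y_2)$ of $A$ and $C$ lies in an earlier column of the same residue class, so that $\mathcal{SG}(Z)\in\{0,1\}$ by induction. But the inductive step is never completed. Step (iii) is only an expectation.

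The strengthened invariant you propose as a fallback is also false. Take $L(7;1,0;1,6)$, where $x_1=x_2=1$ and the even columns are the non-$2$-columns. There $\mathcal{SG}(0,7)=\mathcal{SG}(0,8)=1$, yet $\mathcal{SG}(2,1)=1$ and $\mathcal{SG}(2,2)=0$. So $\mathcal{SG}(x,y)$ is not a function of $\mathcal{SG}(x-x_1-x_2,y+y_1+y_2)$, even in non-$2$-columns; the bad pair of $2$s at $(1,2)$, $(1,8)$ is responsible. As written, the proposal therefore has a gap, and the route you describe for the subcase $r<x_2$ would not close it.

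The missing observation is the mex rule you already used in step (i): a position never has the same value as one of its options. Since $Z$ is an option of both $A$ and $C$, neither can have value $\mathcal{SG}(Z)$. But $\mathcal{SG}(Z)\in\{0,1\}=\{\mathcal{SG}(A),\mathcal{SG}(C)\}$, a contradiction. Concretely, once (i) gives $\mathcal{SG}(Z)=1$, the member of $\{A,C\}$ with value $1$ has $Z$ as an option of value $1$, which is impossible. In step (ii) you went looking for its $0$-option instead of noticing this. This one line is exactly the paper's proof.

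It needs no information about the columns containing $A$ and $C$, so the subcase $r<x_2$ is not an obstacle. Also, the hypothesis $x_2\le x_1$ is not what keeps the induction inside the residue class, since $x-x_1-x_2$ is always congruent to $x$ modulo $x_1+x_2$. The hypothesis merely selects the larger of the two ranges, $[0,x_1)$ or $[0,x_2)$, for which the same argument works.
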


In this case, we call the columns numbered $x_1, x_1+1,\ldots, x_1+x_2-1\bmod (x_1+x_2)$ \emph{potential $2$-columns}. We then call a consecutive block of $x_2$ potential $2$-columns a \emph{$2$-block}.
\begin{figure}[h]
    \centering
    \begin{tikzpicture}
\foreach \i in {0,5,8,13,16}{
\draw[thick,gray] (\i,0) -- (\i,0.9);}
\draw[thick,gray] (0,0.9) -- (16,0.9);
\node at (2.5,0.5) {No $2$s or $3$s};
\draw [decorate,
    decoration = {calligraphic brace}] (0.05,1) --  (4.95,1)
    node[pos=0.5,above=6pt,black]{$x_1$ columns};
\node at (6.5,0.5) {Maybe $2$s and $3$s};
\draw [decorate,
    decoration = {calligraphic brace}] (5.05,1) --  (7.95,1)
    node[pos=0.5,above=6pt,black]{$x_2$ columns};
\node at (10.5,0.5) {No $2$s or $3$s};
\draw [decorate,
    decoration = {calligraphic brace}] (8.05,1) --  (12.95,1)
    node[pos=0.5,above=6pt,black]{$x_1$ columns};
\node at (14.5,0.5) {Maybe $2$s and $3$s};
\draw [decorate,
    decoration = {calligraphic brace}] (13.05,1) --  (15.95,1)
    node[pos=0.5,above=6pt,black]{$x_2$ columns};
\node at (16.3,0.5) {$\cdots$};
\end{tikzpicture}
\caption{The $2$-block Rule  (Lemma \ref{lemma: ac rule})}
    \label{fig:$2$-block rule}
\end{figure}
\begin{proof}
    By \textsc{nim}-periodicity, we may restrict our attention to the first $b$ rows.

    The positions in the first $x_1$ columns each then have at most $1$ option, so the result follows here.

    Now we show the result by induction for $x\geq x_1+x_2$ and $x=0,1,\ldots, x_1-1 \bmod (x_1+x_2)$. Indeed let $x$ be the smallest such value so that $\mathcal{SG}(x,y)=2$ for some $y$.  This means $\{\mathcal{SG}(x-x_1,y+y_1),\mathcal{SG}(x-x_2,y+y_2)\}=\{0,1\}$.  By the choice of $x$, $\mathcal{SG}(x-x_1-x_2,y+y_1+y_2)\in\{0,1\}$, but $(x-x_1-x_2,y+y_1+y_2)$ is an option of both $(x-x_1,y+y_1)$ and $(x-x_2,y+y_2)$, a contradiction.
\end{proof}


Next we show that we have diagonal periodicity once a particular configuration of $2$s and $3$s are absent.

\begin{defi}
    Suppose $\mathcal{SG}^*(x,y)=\mathcal{SG}^*(x',y')=2$, where $(x-x',y-y')=(x_1-x_2,y_2-y_1)$.  We call $(x,y)$ and $(x',y')$ a \emph{bad pair of $2$s}. 
\end{defi}

In this case, $(x,y)$ and $(x',y')$ share the option $(x-x_1,y+y_1)=(x'-x_2, y'+y_2)$.  In other words, $(x,y)$ and $(x',y')$ are the the red positions in Figure \ref{Figure - diagonal periodicity}.

\begin{lem}\label{lem-no bad pairs of twos implies diagonal periodicity}
If $L(b;x_1,y_1;x_2,y_2)$ has no bad pairs of $2$s for all $x\geq x_0$, then it has diagonal periodicity after $x_0+\max\{x_1,x_2\}$.
\end{lem}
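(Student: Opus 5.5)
The plan is to copy the proof of Lemma~\ref{lemma: refinement of Larsson}, with $-u=(-x_1,y_1)$ and $-v=(-x_2,y_2)$ as the two transfer moves. Nim-periodicity will handle the extra move $(0,-b)$, and the hypothesis that there are no bad pairs of $2$s will replace the step in that lemma that used the two-move structure.

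Fix $(x,y)$ with $x\geq x_0+\max\{x_1,x_2\}$, $y\geq y_1+y_2$, and write
\[P=(x-x_1-x_2,\,y+y_1+y_2),\qquad Q=(x,y),\qquad A=(x-x_2,\,y+y_2),\qquad B=(x-x_1,\,y+y_1).\]
Then $A$ and $B$ are the two transfer options of $Q$, and $P$ is a transfer option of both $A$ and $B$. (Showing $\mathcal{SG}^*(P)=\mathcal{SG}^*(Q)$ for such $Q$ is the statement, after relabelling the shifted position as $Q$.) We induct on the lexicographic order of $Q$.

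First I would record a local description of the values. From the proof of nim-periodicity, $\mathcal{SG}^*(Q)=2$ exactly when the transfer options of $Q$ have values $\{0,1\}$. If $\mathcal{SG}(Q)\in\{0,1\}$, Corollary~\ref{corollary: 2s vs 3s} and nim-periodicity mean only one bit remains to be decided. Concretely, $\mathcal{SG}(Q)=0$ iff no transfer option of $Q$ has value $0$ and $(x,y-b)$, if it exists, does not have value $0$; by nim-periodicity the latter says $\mathcal{SG}(x,y-2b)\neq 1$. Similarly for value $1$. So in every case $\mathcal{SG}^*(Q)$ is decided by the values of $A$ and $B$ together with an inductive check in the same column. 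I would carry the induction on $y$ within a column in parallel, so that the vertical move is harmless.

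Then I would run the three cases of Lemma~\ref{lemma: refinement of Larsson}.
\begin{enumerate}[(1)]
\item If $\mathcal{SG}(P)=0$, then $A$ and $B$, each having $P$ as an option, are nonzero. The value-$0$ criterion then gives $\mathcal{SG}(Q)=0$, once the vertical condition is checked via the induction in the column.
\item If $\mathcal{SG}(P)=1$, the crux is to show that at least one of $A$, $B$ has value $0$. Otherwise both have $\mathcal{SG}^*=2$. Since $A-B=(x_1-x_2,\,y_2-y_1)$ and they share the option $P$, they form exactly a bad pair of $2$s. The condition $x\geq x_0+\max\{x_1,x_2\}$ guarantees that both $A$ and $B$ have first coordinate at least $x_0$, so the hypothesis forbids this. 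Hence $Q$ has a transfer option of value $0$ and none of value $1$, and nim-periodicity gives $\mathcal{SG}^*(Q)=1$.
\item If $\mathcal{SG}^*(P)=2$, the transfer options of $P$ have values $\{0,1\}$. Applying cases (1) and (2) inductively, one step earlier along the diagonal, shows that $A$ and $B$ have values $\{0,1\}$, so $\mathcal{SG}^*(Q)=2$.
\end{enumerate}

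I expect the main obstacle to be the bookkeeping for the vertical move $(0,-b)$ in cases (1) and (2). In the two-move lemma, "no option is $0$" immediately gives value $0$. Here I must also check that $(x,y-b)$ is not $0$, and I would do this via a simultaneous induction comparing $(x,y-b)$ with the corresponding position on the diagonal through $P$. A second issue is the boundary condition $y\geq y_1+y_2$, which keeps $A$ and $B$ in the board with nonnegative coordinates. I would also check that the positions $A\pm(\cdot)$ used in the bad-pair step all have first coordinate at least $x_0$; this is exactly what the shift by $\max\{x_1,x_2\}$ is for.
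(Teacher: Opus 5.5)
Your three cases are the paper's argument, and the logic inside each case is right. The paper handles the move $(0,-b)$ by using \textsc{nim}-periodicity to reduce to $0\le y-y_1-y_2<b$, i.e.\ to $Q$ in rows $0,\dots,b-1$, where $Q$ has only its two transfer options. Your in-column induction is an equivalent way to do this, and it is lighter than you expect. Case~(2) needs no induction: a $0$-option forces $\mathcal{SG}(Q)\neq 0$, so the vertical option $\mathcal{SG}(Q)\oplus 1$ cannot be $1$. Case~(1) then only needs case~(2) at $(x,y-b)$, whose diagonal partner $P-(0,b)$ has value $1$ by \textsc{nim}-periodicity.

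The genuine problem is the relabelling. In the definition of diagonal periodicity, both $y\ge y_1+y_2$ and $x\ge x_0+\max\{x_1,x_2\}$ constrain the position being shifted, i.e.\ your $P$, not $Q$.

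\emph{The $y$-condition.} On $Q$ this condition is spurious. $A$ and $B$ always have nonnegative second coordinates, and the condition only serves to keep $Q$ on the board. Imposing it on $Q$ would drop the low rows, so just delete it.

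\emph{The $x$-condition.} This one matters. Your cases~(1)--(2) are fine if $Q$'s first coordinate is only assumed $\ge x_0+\max\{x_1,x_2\}$. But case~(3) must apply cases~(1)--(2) at the targets $B$ and $A$, with sources $(x-2x_1-x_2,\,y+2y_1+y_2)$ and $(x-x_1-2x_2,\,y+y_1+2y_2)$. The bad-pair step at $B$ concerns $B$'s transfer options $P$ and $(x-2x_1,\,y+2y_1)$. These can lie left of $x_0$: for example, with $x_1\ge x_2$ and $x=x_0+x_1$ they sit at $x_0-x_2$ and $x_0-x_1$. Then the hypothesis does not stop them from forming a bad pair of $2$s, and $\{\mathcal{SG}(A),\mathcal{SG}(B)\}=\{0,1\}$ is no longer justified. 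So case~(3) is unproved on the range you claim.

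\emph{The fix.} Under the correct reading, $P$'s first coordinate is at least $x_0+\max\{x_1,x_2\}$. Then $P$'s own transfer options have first coordinate at least $x_0$, and cases~(1)--(2) legitimately apply to them. That is what the shift by $\max\{x_1,x_2\}$ is for, not the bad-pair step: that step only needs $A$ and $B$ to be at or right of $x_0$, which already holds once $P$'s first coordinate is at least $x_0$. This is exactly how the paper splits its hypotheses ($x\ge x_0$ for the $0/1$ cases, $x\ge x_0+\max\{x_1,x_2\}$ for the $2/3$ case). With the constraints moved to $P$, your proof goes through.
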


\begin{proof}
    Consider $(x,y)$ so that $0\leq y-y_1-y_2<b$. In this case, $(x+x_1+x_2,y-y_1-y_2)$ has only two options, so we may apply the methods of the proof of Lemma \ref{lemma: refinement of Larsson}.  The proof of the remaining cases follows by \textsc{nim}-periodicity.  See Figure \ref{Figure - diagonal periodicity}.
    
    Suppose $x\geq x_0$. If $\mathcal{SG}(x,y)=0$, then $\mathcal{SG}(x+x_1,y-y_1)$ and $\mathcal{SG}(x+x_2,y-y_2)$ are both positive.  Hence $\mathcal{SG}(x+x_1+x_2,y-y_1-y_2)=0$.
    If $\mathcal{SG}(x,y)=1$, then $\mathcal{SG}(x+x_1,y-y_1), \mathcal{SG}(x+x_2,y-y_2)\in\{0,2,3\}$.  If neither of these were $0$, they would be a bad pair of $2$s.  Hence $\mathcal{SG}(x+x_1+x_2,y-y_1-y_2)=1$.  

    Now suppose $x\geq x_0+\max\{x_1,x_2\}$.   If $\mathcal{SG}(x,y)\in\{2,3\}$, then  $\mathcal{SG}(x,y-b)$, if it exists, is either $2$ or $3$, by \textsc{nim}-periodicity. Hence $\{\mathcal{SG}(x-x_1,y+y_1), \mathcal{SG}(x-x_2,y+y_2)\}=\{0,1\}$.  Then by the previous cases, $\{\mathcal{SG}(x+x_1,y-y_1), \mathcal{SG}(x+x_2,y-y_2)\}=\{0,1\}$, and so $\mathcal{SG}(x+x_1+x_2,y-y_1-y_2)\in\{2,3\}$
\end{proof}

\begin{proof}[Proof of Proposition \ref{prop: diagonal periodicity if a>2c}]
       Suppose $x_2\geq 2x_1$.  We show that there are no bad pairs of $2$s.  Indeed if $(x,y)$ and $(x',y')$ is a bad pair of $2$s, then $x-x'=x_2-x_1$ and so $x_1\leq x-x'\leq x_2$.  In other words, $x-x'\geq x_1\bmod (x_1+x_2)$, violating Lemma \ref{lemma: ac rule}.   
\end{proof}

\begin{figure}[h]
    \centering
    \begin{tikzpicture}[scale=1] 
    \node[black] (A)  at (0,0) {$(x,y)$};
    \node[red] (B) at (4,0.5) {$(x+x_1, y-y_1)$};
    \node[red] (C) at (0.6,1.2) {$(x+x_2, y-y_2)$};
    \node[black] (D) at (4.6,1.7) {$(x+x_1+x_2, y-y_1-y_2)$};
    \draw [->] (A) -- (B);
    \draw [->] (A) -- (C);
    \draw [->] (B) -- (D);
    \draw [->] (C) -- (D);
    \end{tikzpicture}
    \caption{Proof of Proposition \ref{lem-no bad pairs of twos implies diagonal periodicity}}
    \label{Figure - diagonal periodicity}
\end{figure}

Finally, we show that, as we move from one $2$-block to the next, there are no new $2$s or $3$s, and that the $2$s and $3$s which remain follow diagonal periodicity. 

\begin{lem}\label{bad pair of 2s less than or equal to}
If $\mathcal{SG}^*(x+x_1+x_2,y-y_1-y_2)=2$, then $\mathcal{SG}^*(x,y)=2$.
\end{lem}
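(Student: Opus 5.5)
We argue directly from the definitions, using \textsc{nim}-periodicity twice: first to identify the values of the two ``middle'' positions, and then to rule out $\mathcal{SG}(x,y)\in\{0,1\}$. Since all \textsc{nim}-values of a Lengyel transfer game lie in $\{0,1,2,3\}$, this gives $\mathcal{SG}^*(x,y)=2$. Set
\[
z=(x+x_1+x_2,\,y-y_1-y_2),\qquad A=(x+x_2,\,y-y_2),\qquad B=(x+x_1,\,y-y_1).
\]
These are the positions of Figure \ref{Figure - diagonal periodicity}. The hypothesis that $z$ is a position of the board gives $y\ge y_1+y_2$, so $A$ and $B$ are positions as well. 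Moreover, $A$ and $B$ are both options of $z$, and $(x,y)$ is an option of both $A$ and $B$.

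\emph{Step 1: $\{\mathcal{SG}(A),\mathcal{SG}(B)\}=\{0,1\}$.} By hypothesis $\mathcal{SG}(z)\in\{2,3\}$. The options of $z$ are $A$, $B$, and possibly $z-(0,b)$. By \textsc{nim}-periodicity, $\mathcal{SG}(z-(0,b))=\mathcal{SG}(z)\oplus 1\in\{2,3\}$ whenever that position exists. Since $\mathcal{SG}(z)\geq 2$, the values $0$ and $1$ must both occur among the options of $z$. They can only come from $A$ and $B$, so $\{\mathcal{SG}(A),\mathcal{SG}(B)\}=\{0,1\}$. This is the same argument as in the last case of the proof of Lemma \ref{lem-no bad pairs of twos implies diagonal periodicity}.

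\emph{Step 2: $\mathcal{SG}(x,y)\neq 0$.} One of $A,B$ has value $0$, and $(x,y)$ is an option of it. Hence $(x,y)$ cannot have value $0$.

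\emph{Step 3: $\mathcal{SG}(x,y)\neq 1$.} This is the main point, because the position $D\in\{A,B\}$ with $\mathcal{SG}(D)=1$ gives no direct information about its option $(x,y)$. Suppose for contradiction that $\mathcal{SG}(x,y)=1$. Shift everything up by $b$. By \textsc{nim}-periodicity, $\mathcal{SG}(x,y+b)=0$ and $\mathcal{SG}(D+(0,b))=1\oplus 1=0$. The moves $(-x_i,y_i)$ are invariant under adding $(0,b)$, and no nonnegativity constraint on the $y$-coordinate is affected, since all $y$-coordinates only increase. So $(x,y+b)$ is still an option of $D+(0,b)$. A position of value $0$ would then have an option of value $0$, a contradiction.

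\emph{Conclusion.} Hence $\mathcal{SG}(x,y)\in\{2,3\}$, that is, $\mathcal{SG}^*(x,y)=2$. The only points needing care are the existence checks: $y\ge y_1+y_2$ for $A$ and $B$, and the fact that upward shifts by $b$ always stay on the board.
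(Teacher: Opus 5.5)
Your proof is correct and follows the paper's argument: \textsc{nim}-periodicity makes $z-(0,b)$ (when it exists) a $2$ or $3$, which forces $\{\mathcal{SG}(A),\mathcal{SG}(B)\}=\{0,1\}$, and then $(x,y)$, being an option of both $A$ and $B$, can have neither value $0$ nor value $1$. Your Step 3 detour through the shift by $b$ is valid but unnecessary, and your remark motivating it is mistaken: since $(x,y)$ is an option of $D$ with $\mathcal{SG}(D)=1$, the mex rule already gives $\mathcal{SG}(x,y)\neq 1$ directly, exactly as in Step 2.
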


\begin{proof}
    Following Figure \ref{Figure - diagonal periodicity}, let $\mathcal{SG}^*(x+x_1+x_2,y-y_1-y_2)=2$.    Then, by \textsc{nim}-periodicity, $\mathcal{SG}^*(x+x_1+x_2,y-y_1-y_2-b)$, if it exists, is $2$.  Hence
$\{\mathcal{SG}(x+x_1,y-y_1),\mathcal{SG}(x+x_2,y-y_2)\}=\{0,1\}$.  Thus $\mathcal{SG}^*(x,y)=2$.
\end{proof}

\section{Proof of Theorem \ref{Main Theorem 1}: The Period of $L(b;x_1,0;1, 1)$}\label{Section - Proof of Main Theorem 1}

\begin{prop}
    \label{corollary: $L(a,b,1,1)$ has diagonal periodicity}
    Assume that $(b,x_1)\neq (1,1)$. The game $L(b;x_1,0;1,1)$ has diagonal periodicity.
\end{prop}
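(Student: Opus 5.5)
For $x_1\ge 2$ the statement is immediate. Here $x_2=1$, so $x_1\geq 2=2x_2$, and Proposition \ref{prop: diagonal periodicity if a>2c} applies directly. So the real content is the remaining case $x_1=1$, $b\geq 2$, i.e.\ the game $L(b;1,0;1,1)$. In that case I would use Lemma \ref{lem-no bad pairs of twos implies diagonal periodicity} with $x_0=0$.

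\textbf{Reducing to a statement about even columns.} With $x_1=x_2=1$, $y_1=0$, $y_2=1$, a bad pair of $2$s is two \emph{vertically adjacent} positions $(x,y),(x,y+1)$ in the same column with $\mathcal{SG}^*=2$. By the $2$-block Rule (Lemma \ref{lemma: ac rule}, $x_1+x_2=2$), only odd columns can contain $2$s or $3$s. So every even column takes values in $\{0,1\}$ only. Now take $x$ odd and a position $(x,y)$. Its options are $(x-1,y)$, $(x-1,y+1)$ and possibly $(x,y-b)$. Combining this with \textsc{nim}-periodicity, exactly as in the proof of Lemma \ref{bad pair of 2s less than or equal to}, gives
\[
\mathcal{SG}^*(x,y)=2 \iff \mathcal{SG}(x-1,y)\neq \mathcal{SG}(x-1,y+1).
\]
Hence a bad pair in column $x$ happens exactly when column $x-1$ alternates $v,1-v,v$ at three consecutive heights. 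It therefore suffices to prove that in every even column, read cyclically with period $2b$, all maximal runs of equal values have length at least $2$. Under that condition the hypothesis of Lemma \ref{lem-no bad pairs of twos implies diagonal periodicity} holds with $x_0=0$, giving diagonal periodicity after $x=1$.

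\textbf{Covering column $0$.} Column $0$ has no $2$s, so only the $0$ and $1$ cases of the proof of Lemma \ref{lem-no bad pairs of twos implies diagonal periodicity} are needed there. Those cases hold for all $x\ge x_0$, so diagonal periodicity holds from column $0$ on.

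\textbf{Inductive claim about even columns.} I would prove the stronger statement that each even column $2k$ is a cyclic vertical shift of column $0$. Column $0$ is the pattern of $b$ zeros followed by $b$ ones, repeating with period $2b$. Since $b\ge 2$, every run in this pattern has length $b\geq 2$, which is exactly where the hypothesis $(b,x_1)\neq(1,1)$ enters. The induction step computes two columns in turn, restricting to the first $b$ rows and extending by \textsc{nim}-periodicity.
\begin{itemize}
\item[(1)] From column $2k$ to column $2k+1$. If $\mathcal{SG}(2k,y)=\mathcal{SG}(2k,y+1)=v$, then $\mathcal{SG}(2k+1,y)=1-v$. At each of the two transition heights per period, the value is $2$ or $3$.
\item[(2)] From columns $2k+1$ and $2k$ to column $2k+2$. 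Here $(2k+2,y)$ has options $(2k+1,y)$, $(2k+1,y+1)$ and $(2k+2,y-b)$. Track the mex through the first $b$ rows. The goal is to show the output is again $b$ consecutive equal values, then $b$ of the complement, with the block boundary shifted by a fixed amount (one or two units).
\end{itemize}

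\textbf{Expected obstacle.} Step (2) is the main difficulty. Near the transition heights the $2$/$3$ entries of column $2k+1$ interact with the $(0,-b)$ move, so the case analysis at the two boundaries must be done carefully, including near $y=b-1$ where the row type changes. If the exact-shift claim turns out to be false, I would fall back to proving only the weaker invariant directly: even columns have no isolated singleton run. I would check this by examining the three consecutive positions $(2k+2,y-1)$, $(2k+2,y)$, $(2k+2,y+1)$ against the pattern of column $2k+1$.
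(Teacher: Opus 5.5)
Your reductions are sound. These include the case $x_1\ge 2$ via Proposition \ref{prop: diagonal periodicity if a>2c}, and the observation that in $L(b;1,0;1,1)$ a bad pair is two vertically adjacent $2/3$ entries in an odd column. The criterion $\mathcal{SG}^*(x,y)=2\iff\mathcal{SG}(x-1,y)\neq\mathcal{SG}(x-1,y+1)$ for odd $x$ is also correct, as is the resulting sufficient condition (no singleton runs in even columns) and the separate treatment of column $0$.

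The gap is that the proposal stops exactly where the content lies. Step (2), which you yourself call the main difficulty, is only stated as a goal. You leave open whether the shift is one or two units and offer a fallback. As written, nothing rules out a singleton run, and hence a bad pair, appearing in some later column, so the hypothesis of Lemma \ref{lem-no bad pairs of twos implies diagonal periodicity} is never verified.

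The step is in fact short, and easier than you feared. Put $c_k(y)=\mathcal{SG}(2k,y)\in\{0,1\}$, and assume its transition heights (where $c_k(y)\neq c_k(y+1)$) are spaced $b$ apart. For $y<b$ the move $(0,-b)$ is unavailable, so $(2k+2,y)$ has only the options $(2k+1,y)$ and $(2k+1,y+1)$. Since $b\ge 2$, at most one of these sits at a transition height, where the value is $2$ or $3$. By your step (1), any option not at a transition height has value $1-c_k(y+1)$; for $(2k+1,y)$ use $c_k(y)=c_k(y+1)$. Hence the mex is $c_k(y+1)$ in every case, whether the large entry is a $2$ or a $3$. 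So $c_{k+1}(y)=c_k(y+1)$ for $y<b$, and \textsc{nim}-periodicity extends this to all $y$. The shift is exactly one unit and the induction closes.

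The paper avoids even this computation. By Lemma \ref{bad pair of 2s less than or equal to}, a bad pair $(x+2,r),(x+2,r+1)$ forces the bad pair $(x,r+1),(x,r+2)$, so by descent any bad pair produces one in column $1$. But column $1$ has $2/3$ entries only at heights $\equiv b-1\pmod b$, which are never adjacent when $b\ge 2$, and Lemma \ref{lem-no bad pairs of twos implies diagonal periodicity} finishes. Your completed route yields more: an explicit formula for every column, which is essentially diagonal periodicity itself. However, it re-derives by hand the backward propagation of $2$s that Lemma \ref{bad pair of 2s less than or equal to} already supplies.
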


Note that Figure \ref{fig:L(1;1,0;1,1)} shows that the conclusion fails if  $(b,x_1)= (1,1)$.  

\begin{proof}
    Proposition \ref{prop: diagonal periodicity if a>2c} implies the result if $x_1\geq 2$. 
    
    Assume $x_1=1$ and $b\geq 2$.  Lemma \ref{lemma: ac rule} says that exactly the odd numbered columns are potential $2$-columns.      
    Then by inspection, we see that the first $2$-block does not contain any bad pairs of $2$s (See Figure \ref{L(3,1,0,1,1)}, for example), so by Lemma \ref{bad pair of 2s less than or equal to}, there are not any at all.  Then Lemma \ref{lem-no bad pairs of twos implies diagonal periodicity} implies the result.
\end{proof}
\begin{figure}[h]
    \centering
    $\begin{tblr}{cccccccccccc}
         0 & 1 & 0 & 1 & 0 & 2 & 1 & 0 & 1 & 0 & 1 & 2\\
         0 & 1 & 0 & 2 & 1 & 0 & 1 & 0 & 1 & 2 & 0 & 1\\
         0 & 2 & 1 & 0 & 1 & 0 & 1 & 2 & 0 & 1 & 0 & 1 \\ \hline[dashed]
         1 & 0 & 1 & 0 & 1 & 3 & 0 & 1 & 0 & 1 & 0 & 3\\
         1 & 0 & 1 & 3 & 0 & 1 & 0 & 1 & 0 & 3 & 1 & 0\\
         1 & 3 & 0 & 1 & 0 & 1 & 0 & 3 & 1 & 0 & 1 & 0
    \end{tblr}$
    \caption{A period of the \textsc{nim}-values for $L(3;1,0;1,1)$}
    \label{L(3,1,0,1,1)}
\end{figure}

Next Lemma \ref{lemma: ac rule} says that the first $x_1$ columns are not $2$-columns.  The cases (b) and (c) of Theorem \ref{Main Theorem 1} are determined by whether the $(x_1+1)$th is a $2$-column.

\begin{lem}\label{2 columns in the proof of MT1}
    The $(x_1+1)$th column of the game $L(b;x_1,0;1,1)$ is a $2$-column, except when $b$ is even and $x_1$ is an odd multiple of $b$.  
\end{lem}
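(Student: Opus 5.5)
The plan is to compute the column numbered $x_1$ (the $(x_1+1)$th column) explicitly. By \textsc{nim}-periodicity, a column contains a $2$ or $3$ exactly when one of its first $b$ rows contains a $2$ (Corollary \ref{corollary: 2s vs 3s}). So it suffices to decide whether $\mathcal{SG}(x_1,y)=2$ for some $0\le y<b$.

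First I determine columns $0,\dots,x_1-1$. In these columns the move $(-x_1,0)$ is unavailable, so the values agree with those of the two-move game $L(b;1,1)$. From the proof of Theorem \ref{Theorem: 2 vector case} with $x_1=y_1=1$, column $0$ is $\mathcal{SG}(0,y)=\lfloor y/b\rfloor \bmod 2$. Each subsequent column satisfies $\mathcal{SG}(k,y)=\mathcal{SG}(k-1,y+1)\oplus 1$. Inducting on $k$ gives the closed form
\[\mathcal{SG}(k,y)\equiv \left\lfloor \tfrac{y+k}{b}\right\rfloor + k \pmod 2,\qquad 0\le k\le x_1-1 .\]
I will verify this directly by induction: for $y<b$ the unique option is $(k-1,y+1)$, and the remaining rows follow by \textsc{nim}-periodicity.

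Next, take $0\le y<b$ in column $x_1$. The position $(x_1,y)$ has exactly two options: $(0,y)$, whose value is $0$ since $y<b$, and $(x_1-1,y+1)$, whose value is $\lfloor (y+x_1)/b\rfloor + x_1-1 \bmod 2$. Hence $\mathcal{SG}(x_1,y)=2$ precisely when the second option has value $1$, that is, when
\[\left\lfloor \tfrac{y+x_1}{b}\right\rfloor + x_1 \equiv 0 \pmod 2 .\]

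The remaining step is a case analysis as $y$ runs over $0,\dots,b-1$.
\begin{itemize}
\item If $b\nmid x_1$, then $\lfloor (y+x_1)/b\rfloor$ takes both values $\lfloor x_1/b\rfloor$ and $\lfloor x_1/b\rfloor+1$. Both parities therefore occur, and some row has value $2$.
\item If $x_1=mb$, then $\lfloor (y+x_1)/b\rfloor=m$ for every such $y$, so the condition becomes $m+mb$ even.
  \begin{itemize}
  \item For $b$ odd, $m+mb=m(b+1)$ is always even, so the column is a $2$-column. This includes $b=1$.
  \item For $b$ even, $m+mb\equiv m \pmod 2$, so a $2$ appears iff $m$ is even. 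The column fails to be a $2$-column exactly when $m$ is odd, which is the stated exception.
  \end{itemize}
\end{itemize}

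The only real obstacle is establishing the closed form for the first $x_1$ columns carefully, in particular the index bookkeeping in the shift by $b+1$. Once that is in hand, the rest is the elementary floor-function analysis above.
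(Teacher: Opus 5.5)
Your proof is correct and takes essentially the same route as the paper. Both reduce to the first $b$ rows, identify columns $0,\dots,x_1-1$ with $L(b;1,1)$, and observe that $(x_1,y)$ has the options $(0,y)$ (value $0$) and $(x_1-1,y+1)$, so its value is $2$ exactly when the latter has value $1$. Where the paper simply reads this condition off the figure for $L(b;1,1)$, you derive the closed form $\lfloor (k+y)/b\rfloor + k \bmod 2$ and carry out the floor-function case analysis explicitly, which makes that step rigorous and also proves the exceptional case has no $2$s.
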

\begin{proof}
    By \textsc{nim}-periodicity, we just need to check whether any of the first $b$ entries of this column have \textsc{nim}-value $2$.  Note that each of these positions only have $2$ options.

    The first $x_1$ columns of $L(b;x_1,0;1,1)$ are identical to that of $L(b;1,1)$.  In particular, the first $b$ entries of the first column are all $0$.
    
    Now consider the $(x_1+1)$th column of $L(b;1,1)$. Observing Figure \ref{fig:two-move case}, we see that this column contains some $0$s in the first $b$ entries, except in the case that $b$ is even and $x_1$ is an odd multiple of $b$.

    In the first case, consider one of these $0$ entries, $(x_1,y)$.  Then $\mathcal{SG}(x_1-1,y+1)$ must be $1$, and $\mathcal{SG}(0,y)=0$.  Hence, in the game $L(b;x_1,0;1,1)$, we have $\mathcal{SG}(x,y)=2$.

    In the latter case, when $\mathcal{SG}(x_1,y)=1$ for all $0\leq y
    \leq b-1$, we must have $\mathcal{SG}(x_1-1,y+1)=0$ for all $0\leq y
    \leq b-1$.  Since we also have $\mathcal{SG}(0,y)=0$ for all $0\leq y
    \leq b-1$, then in the game $L(b;x_1,0;1,1)$, we have $\mathcal{SG}(x_1,y)=1$ for all $0\leq y
    \leq b-1$.
    %
    %
\end{proof}

\begin{proof}[Proof of Theorem \ref{Main Theorem 1}]  

Again, the proof of part (a) is essentially Figure \ref{fig:L(1;1,0;1,1)}.  So assume $(b,x_1)\neq (1,1)$.

Suppose first that it is not the case that $b$ is even and $x_1$ is an odd multiple of $b$.  By Lemma \ref{2 columns in the proof of MT1}, the $(x_1+1)$th column is a $2$ column.  Next, by Proposition \ref{corollary: $L(a,b,1,1)$ has diagonal periodicity}, the first $x_1+1$ columns repeat with an upward shift of $1$ unit.  We then know that it takes $2b$ such shifts to repeat, since the first column of the period is precisely $b$ $0$s followed by $b$ $1$s.  Hence the horizontal component of the period is $2b(x_1+1)$

In the case that $b$ is even and $x_1$ is an odd multiple of $b$, Lemma \ref{2 columns in the proof of MT1} states that the $(x_1+1)$th column does not contain any $2$s or $3$s.  Then Proposition \ref{corollary: $L(a,b,1,1)$ has diagonal periodicity} implies that no later columns do either.
The only way for this to happen is if, for any such position, all options have the same \textsc{nim}-value.  In other words, the values of this game are same as that of $L(b;1,1)$, which, by Corollary \ref{cor: period of L(b,1,1)}, has a period whose horizontal component is $2b$ when $b$ is even. 
\end{proof}

\section{Long preperiods}\label{section - long preperiods}

In this section, we examine a set of examples of $L(b;x_1,0;x_2,y_2)$ with arbitrarily long preperiod.  One set has period 2, showing that the preperiod can be arbitrarily longer than the period.  The other set has period $g(b,x_1,0,x_2,y_2)$, showing that we have arbitrarily long preperiod when the period is maximal.

\begin{prop}\label{prop - Sporadic1}  The following games have the following horizontal preperiods and periods:
\begin{center}
\begin{tabular}{c|l|c|l}
   \; &\; &preperiod& period\\
    \hline
    i&$L(b; 1, 0; 1, b-1)$, $b$ odd & $b-1$ & $2$\\\hline
    ii&$L(b; 1, 0; 1, b-1)$, $b$ even & $b-2$ & $4b$ \\\hline
    iii&$L(b; 1, 0; 1, b+1)$,  $b$ odd & $b-1$ & $2$ \\\hline
    iv&$L(b; 1, 0; 1, b+1)$, $b$ even & $b-2$ & $4b$     \\\hline
\end{tabular}
\end{center}
\end{prop}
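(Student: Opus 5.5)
The plan is to exploit that both transfer moves in these games decrease $x$ by exactly $1$. Hence column $x$ is a deterministic function of column $x-1$ alone. Concretely, let $C_x:\{0,\dots,2b-1\}\to\{0,1,2,3\}$ record the first $2b$ entries of column $x$; by \textsc{nim}-periodicity, $C_x$ determines the whole column. Then for $0\le y<b$ we have
\[C_x(y)=\mex\{C_{x-1}(y),\,C_{x-1}(y+y_2)\},\]
where $C_{x-1}(y+y_2)$ is read using $\mathcal{SG}(x,y+2b)=\mathcal{SG}(x,y)$, or equivalently $\mathcal{SG}(x,y+b)=\mathcal{SG}(x,y)\oplus 1$. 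The rows $b\le y<2b$ are then forced to equal $C_x(y-b)\oplus 1$. So there is a map $T$ on the finite set of admissible columns with $C_x=T(C_{x-1})$ and $C_0=0^b1^b$. The horizontal preperiod and period of the game are exactly the tail length and cycle length of the orbit of $C_0$ under $T$. No diagonal-periodicity or $2$-block machinery is needed, since $x_1=x_2$ makes Lemma \ref{lemma: ac rule} vacuous.

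For $y_2=b\pm 1$, I would first rewrite the second option using \textsc{nim}-periodicity. We have $C_{x-1}(y+b-1)=C_{x-1}(y-1)\oplus 1$ when $y\ge 1$, and $C_{x-1}(y+b+1)=C_{x-1}(y+1)\oplus1$ when $y\le b-2$. So within the top $b$ rows, each entry is the $\mex$ of the entry to its left and the $\oplus1$ of the entry diagonally up-left (for $b-1$) or down-left (for $b+1$), with a wrap-around correction at the boundary row. This local rule makes the evolution look like a cellular automaton on a length-$b$ word over $\{0,1,2\}$. I would compute the columns for small $b$, guess closed forms for $C_k$ with $0\le k\le b-1$, and prove them by induction on $k$. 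I expect $C_k$ to consist of a run of $0$s/$1$s with a single moving defect, a $2$ or a boundary between $0$-runs and $1$-runs, which advances one row per column. This explains a transient of length about $b$.

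Next I would identify the column reached at the end of the transient, $C_{b-1}$ for $b$ odd or $C_{b-2}$ for $b$ even. For $b$ odd, I would show directly that $T^2$ fixes it while $T$ does not, giving period $2$. I would also check that $C_{b-2}\ne C_b$, so the preperiod is exactly $b-1$. For $b$ even, I would instead show that on the periodic part $T$ acts as a pure rotation of the column by one row. That is, $C_{x+1}(y)=C_x(y+1)$ or $C_x(y-1)$ in the cyclic $2b$-row sense, which automatically incorporates the $\oplus1$ via \textsc{nim}-periodicity, possibly composed with a swap between two alternating column types. Since the column $0^b1^b$-like pattern has exact cyclic period $2b$ and the two types alternate, this would give period $4b$ and not less. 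Finally, I would check that $C_{b-3}$ is not on the cycle.

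I would treat cases (iii) and (iv) by the same computation with the down-left rule, and look for a reflection $y\mapsto -1-y \pmod{2b}$, combined with $\oplus1$, that conjugates the $b-1$ automaton to the $b+1$ automaton. If such a reflection exists, it yields (iii) and (iv) from (i) and (ii) once the initial column is matched. The main obstacle is the middle step: finding and proving the exact closed form for $C_k$ throughout the transient, especially the handling of the wrap-around row and of where the $2$s (and hence $3$s) sit. Everything else is bookkeeping once those formulas are established. I would attempt it first by reading the pattern off computed arrays for $b=3,4,5,6$.
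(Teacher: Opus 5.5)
Your route is sound and differs from the paper's, which gives no general argument. The paper displays the arrays for $b=7,8$, highlights how the bad pairs of $2$s die out during the preperiod, and shows the first two columns of the period so that diagonal periodicity (Lemma \ref{lem-no bad pairs of twos implies diagonal periodicity}) can be seen. It then asserts that the pattern persists. Your reduction to the orbit of $C_0$ under the column map $T$ is correct, and it turns ``the pattern persists'' into an actual induction. Your reflection idea also works, and the paper does not use it: on \textsc{nim}-periodic columns, $y\mapsto -1-y$ followed by $\oplus1$ is the same as $y\mapsto b-1-y \pmod{2b}$. This map carries the $y_2=b-1$ recursion to the $y_2=b+1$ recursion and fixes $C_0$, so (iii) and (iv) inherit exactly the preperiods and periods of (i) and (ii).

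Two smaller corrections. First, Lemma \ref{lemma: ac rule} is not vacuous for $x_1=x_2=1$: it says the even-numbered columns contain no $2$s, which is exactly the alternation of column types you need. Second, the transient is not a single moving defect. For $0\le k\le b$, the top block of $C_k$ is alternating on rows $y<k$ ($y\bmod 2$ for $k$ even, $(y+1)\bmod 2$ for $k$ odd). On rows $k\le y<b$ it is constant: all $0$ for $k$ even, all $2$ for $k$ odd. The step $k\to k+1$ goes through whenever $k\le b-1$, since then $C_k(b-1)$ lies in the constant part.

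The step that fails as written is the $b$-even cycle. $T$ is not a rotation there, because it sends $2$-free columns to columns containing a $2$. Also, if each column were a one-row rotation of the previous one, the period would divide $2b$, not $4b$. What is true is the following. Let $W$ be a $\{0,1\}$-valued \textsc{nim}-periodic column that alternates except for a $00$ at rows $(d-1,d)$ and a $11$ at rows $(d+b-1,d+b)$. Then $T(W)=W\oplus1$, except that rows $d$ and $d+b$ carry the $2$ and the $3$. Moreover, $T^2(W)$ is $W$ with rows $d$ and $d+b$ flipped, that is, $T^2(W)(y)=W(y-b-1)$. So the rotation happens once every two columns, and it holds literally only on the $2$-free columns; on the others it holds after identifying $2$ with $3$. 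For $b$ even, $\gcd(b+1,2b)=1$, and such a $W$ has trivial rotational stabilizer, so the even-indexed columns have exact period $2b$. Since only the odd-indexed columns contain $2$s, the period is even, hence exactly $4b$. Here $C_{b-2}$ is such a $W$ with $d=b-1$. For $b\ge 4$, $C_{b-3}$ has three $2$s in its top block, so the preperiod is exactly $b-2$. For $b$ odd, the same computation with a defect-free alternating $W$ gives $T(W)=W\oplus1$ and period $2$.
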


\begin{proof}
    Figure \ref{L(b;1,0;1,b-1)} shows the preperiod and the first two columns of the period  for each case of $b=7$ and $b=8$.  After careful inspection, one finds that the pattern persists for other values of $b$.
\end{proof}

We have highlighted the bad pairs of $2$s in Figure \ref{L(b;1,0;1,b-1)} so the reader can observe the exact manner in which they disappear over the course of the preperiod.

\begin{figure}[h]
    \centering
    $\begin{tblr}{cccccc|cc}
         0 & 1 & 0 & 1 & 0 & 1 & 0 & 1 \\
         0 & 2 & 1 & 0 & 1 & 0 & 1 & 0 \\
         0 & \SetCell{cyan}2 & 0 & 1 & 0 & 1 & 0 & 1 \\
         0 & \SetCell{cyan}2 & 0 & 2 & 1 & 0 & 1 & 0 \\
         0 & \SetCell{cyan}2 & 0 & \SetCell{cyan}2 & 0 & 1 & 0 & 1 \\
         0 & \SetCell{cyan}2 & 0 & \SetCell{cyan}2 & 0 & 2 & 1 & 0 \\
         0 & \SetCell{cyan}2 & 0 & \SetCell{cyan}2 & 0 & \SetCell{cyan}2 & 0 & 1 \\
         \hline[dashed]
         1 & 0 & 1 & 0 & 1 & 0 & 1 & 0 \\
         1 & \SetCell{yellow}3 & 0 & 1 & 0 & 1 & 0 & 1 \\
         1 & \SetCell{yellow}3 & 1 & 0 & 1 & 0 & 1 & 0 \\
         1 & \SetCell{yellow}3 & 1 & \SetCell{yellow}3 & 0 & 1 & 0 & 1 \\
         1 & \SetCell{yellow}3 & 1 & \SetCell{yellow}3 & 1 & 0 & 1 & 0 \\
         1 & \SetCell{yellow}3 & 1 & \SetCell{yellow}3 & 1 & \SetCell{yellow}3 & 0 & 1 \\
         1 & 3 & 1 & 3 & 1 & 3 & 1 & 0 \\    \end{tblr}$
         \qquad
         $\begin{tblr}{cccccc|cc}
         0 & 1 & 0 & 1 & 0 & 1 & 0 & 1 \\
         0 & 2 & 1 & 0 & 1 & 0 & 1 & 0 \\
         0 & \SetCell{cyan}2 & 0 & 1 & 0 & 1 & 0 & 1 \\
         0 & \SetCell{cyan}2 & 0 & 2 & 1 & 0 & 1 & 0 \\
         0 & \SetCell{cyan}2 & 0 & \SetCell{cyan}2 & 0 & 1 & 0 & 1 \\
         0 & \SetCell{cyan}2 & 0 & \SetCell{cyan}2 & 0 & 2 & 1 & 0 \\
         0 & \SetCell{cyan}2 & 0 & \SetCell{cyan}2 & 0 & \SetCell{cyan}2 & 0 & 1 \\
         0 & \SetCell{cyan}2 & 0 & \SetCell{cyan}2 & 0 & \SetCell{cyan}2 & 0 & 2\\
         \hline[dashed]
         1 & 0 & 1 & 0 & 1 & 0 & 1 & 0 \\
         1 & \SetCell{yellow}3 & 0 & 1 & 0 & 1 & 0 & 1 \\
         1 & \SetCell{yellow}3 & 1 & 0 & 1 & 0 & 1 & 0 \\
         1 & \SetCell{yellow}3 & 1 & \SetCell{yellow}3 & 0 & 1 & 0 & 1 \\
         1 & \SetCell{yellow}3 & 1 & \SetCell{yellow}3 & 1 & 0 & 1 & 0 \\
         1 & \SetCell{yellow}3 & 1 & \SetCell{yellow}3 & 1 & \SetCell{yellow}3 & 0 & 1 \\
         1 & \SetCell{yellow}3 & 1 & \SetCell{yellow}3 & 1 & \SetCell{yellow}3 & 1 & 0 \\
         1 & 3 & 1 & 3 & 1 & 3 & 1 & 3 \\
         \end{tblr}$

\bigskip
         
         $\begin{tblr}{cccccc|cc}
         0 & \SetCell{cyan}2 & 0 & \SetCell{cyan}2 & 0 & \SetCell{cyan}2 & 0 & 1 \\
         0 & \SetCell{cyan}2 & 0 & \SetCell{cyan}2 & 0 & 2 & 1 & 0 \\
        0 & \SetCell{cyan}2 & 0 & \SetCell{cyan}2 & 0 & 1 & 0 & 1 \\
        0 & \SetCell{cyan}2 & 0 & 2 & 1 & 0 & 1 & 0 \\
        0 & \SetCell{cyan}2 & 0 & 1 & 0 & 1 & 0 & 1 \\
         0 & 2 & 1 & 0 & 1 & 0 & 1 & 0 \\
         0 & 1 & 0 & 1 & 0 & 1 & 0 & 1 \\
         \hline[dashed]
         1 & 3 & 1 & 3 & 1 & 3 & 1 & 0 \\    
          1 & \SetCell{yellow}3 & 1 & \SetCell{yellow}3 & 1 & \SetCell{yellow}3 & 0 & 1 \\
         1 & \SetCell{yellow}3 & 1 & \SetCell{yellow}3 & 1 & 0 & 1 & 0 \\
         1 & \SetCell{yellow}3 & 1 & \SetCell{yellow}3 & 0 & 1 & 0 & 1 \\
         1 & \SetCell{yellow}3 & 1 & 0 & 1 & 0 & 1 & 0 \\
         1 & \SetCell{yellow}3 & 0 & 1 & 0 & 1 & 0 & 1 \\
          1 & 0 & 1 & 0 & 1 & 0 & 1 & 0 \\
         \end{tblr}$
         \qquad
         $\begin{tblr}{cccccc|cc}
         0 & \SetCell{cyan}2 & 0 & \SetCell{cyan}2 & 0 & \SetCell{cyan}2 & 0 & 2\\
         0 & \SetCell{cyan}2 & 0 & \SetCell{cyan}2 & 0 & \SetCell{cyan}2 & 0 & 1 \\
        0 & \SetCell{cyan}2 & 0 & \SetCell{cyan}2 & 0 & 2 & 1 & 0 \\
         0 & \SetCell{cyan}2 & 0 & \SetCell{cyan}2 & 0 & 1 & 0 & 1 \\
         0 & \SetCell{cyan}2 & 0 & 2 & 1 & 0 & 1 & 0 \\
         0 & \SetCell{cyan}2 & 0 & 1 & 0 & 1 & 0 & 1 \\
         0 & 2 & 1 & 0 & 1 & 0 & 1 & 0 \\
         0 & 1 & 0 & 1 & 0 & 1 & 0 & 1 \\
         \hline[dashed]
         1 & 3 & 1 & 3 & 1 & 3 & 1 & 3 \\
1 & \SetCell{yellow}3 & 1 & \SetCell{yellow}3 & 1 & \SetCell{yellow}3 & 1 & 0 \\
1 & \SetCell{yellow}3 & 1 & \SetCell{yellow}3 & 1 & \SetCell{yellow}3 & 0 & 1 \\
1 & \SetCell{yellow}3 & 1 & \SetCell{yellow}3 & 1 & 0 & 1 & 0 \\
 1 & \SetCell{yellow}3 & 1 & \SetCell{yellow}3 & 0 & 1 & 0 & 1 \\
1 & \SetCell{yellow}3 & 1 & 0 & 1 & 0 & 1 & 0 \\
        1 & \SetCell{yellow}3 & 0 & 1 & 0 & 1 & 0 & 1 \\
         1 & 0 & 1 & 0 & 1 & 0 & 1 & 0 \\
         \end{tblr}$
    \caption{The preperiod of the \textsc{nim}-values for $L(7;1,0;1,6)$, $L(8;1,0;1,7)$, $L(7;1,0;1,8)$, and $L(8;1,0;1,9)$, together with the first two $2$-blocks of the initial period, showing diagonal periodicity.  The bad pairs of $2$s are highlighted.}
    \label{L(b;1,0;1,b-1)}
\end{figure}

\section{Simplifying assumptions}\label{Section - simplifying assumptions}

We work toward establishing a coherent conjecture about the horizontal period of $L(b;x_1,0;x_2,y_2)$.  To this end, we establish some simplifying assumptions.  Note that the results in this and the next section do apply to the more general case where $y_1$ is not necessarily $0$.

\begin{itemize}
    \item If $y_2=0$, then $L(b;x_1,0;x_2,y_2)$ is isomorphic to the one-dimensional, two-move subtraction game defined by $x_1$ and $x_2$.  In this case, apply Lemma \ref{Larssons lemma} to the first $x_1+x_2$ values.
    \item If $y_1\geq 2b$ or $y_2\geq 2b$, then apply Corollary \ref{cor: y can be reduced mod 2b}.
    \item If $\gcd\{x_1,x_2\}>1$ or if  $\gcd\{b,y_1,y_2\}>1$, then apply Corollary \ref{lemma:dilation}.
    \end{itemize}

Our last observation is that sometimes one of the vectors defining a game is redundant, and we can thus can be eliminated.  Then the \textsc{nim}-values can be computed via Theorem \ref{Theorem: 2 vector case}.

\begin{lem}[Vector Elimination]\label{lemma-vector elimination}
    Consider $L(b;x_1,y_1;x_2,y_2)$.  Suppose that either
    \begin{itemize}
        \item There is an odd $k$ so that $x_2=kx_1$ and $y_2 = ky_1 \bmod 2b$; or
        \item There is an even $k$ so that $x_2=kx_1$ and $y_2 = ky_1+b \bmod 2b$
    \end{itemize}
    then $L(b;x_1,y_1;x_2,y_2)$ has the same \textsc{nim}-values as $L(b;x_1,y_1)$.  In particular, it has no $2$s or $3$s.
\end{lem}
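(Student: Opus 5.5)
We will show by induction on the lexicographic order of $(x,y)$ that the \textsc{nim}-values of $L(b;x_1,y_1;x_2,y_2)$ agree with those of $L(b;x_1,y_1)$. Write $\mathcal{SG}'$ for the values of the two-vector game. By the proof of Theorem \ref{Theorem: 2 vector case}, $\mathcal{SG}'$ takes only the values $0,1$. Moreover, $\mathcal{SG}'$ is \textsc{nim}-periodic in $(0,-b)$, and each column is obtained from the one $x_1$ to its left by
\[
\mathcal{SG}'(x,y)=\mathcal{SG}'(x-x_1,y+y_1)\oplus 1 \qquad (x\ge x_1).
\]
This holds because a position with at most two options, one of which has value $c$ and the other of value $c\oplus 1$, cannot arise when all values lie in $\{0,1\}$. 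So in a $\{0,1\}$-valued two-move game the value is simply the complement of the value of the $(-x_1,y_1)$-option whenever that option exists. More precisely, the value is $1$ iff some option is $0$; by \textsc{nim}-periodicity the $(0,-b)$-option always has the value complementary to $(x,y)$ itself, which forces the stated relation.

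Iterating the relation $k$ times gives
\[
\mathcal{SG}'(x-kx_1,\,y+ky_1)=\mathcal{SG}'(x,y)\oplus(k \bmod 2).
\]
Combining this with \textsc{nim}-periodicity and Corollary \ref{cor: y can be reduced mod 2b}, adding $b$ to the $y$-coordinate toggles the value once more. Thus, under either hypothesis,
\[
\mathcal{SG}'(x-x_2,\,y+y_2)=\mathcal{SG}'(x,y)\oplus 1 \qquad \text{whenever } x\ge x_2.
\]
In the odd-$k$ case there are $k$ toggles, an odd number. In the even-$k$ case there are $k$ toggles plus one extra toggle from the shift by $b$, again an odd number in total. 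Here we use that $\mathcal{SG}'$ is periodic in $(0,2b)$, so $y_2$ only matters modulo $2b$.

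For the induction step, assume all positions lexicographically smaller than $(x,y)$ have the same value in both games; in particular their values lie in $\{0,1\}$. In $L(b;x_1,y_1;x_2,y_2)$, the options of $(x,y)$ are the options in the two-vector game together with possibly $(x-x_2,y+y_2)$, when $x\ge x_2$. Let $v=\mathcal{SG}'(x,y)$. By the computation above, the extra option has value $v\oplus 1$, and by induction this value is the same in the three-vector game.

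If $v=1$, the two-vector game already has an option of value $0$ and none of value $1$. The extra option adds only the value $0$, so the mex is still $1$.

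If $v=0$, the two-vector game has no option of value $0$, and the extra option has value $1$, so the mex is still $0$.

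Hence $\mathcal{SG}(x,y)=v$, completing the induction. Since no value $2$ or $3$ ever appears, the final claim follows.

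The main obstacle is justifying the column relation $\mathcal{SG}'(x,y)=\mathcal{SG}'(x-x_1,y+y_1)\oplus 1$ cleanly, and in particular verifying that the $(0,-b)$-option never already supplies the value $\mathcal{SG}'(x,y)\oplus 1$. I would handle this with a small case check using \textsc{nim}-periodicity: $\mathcal{SG}'(x,y-b)=\mathcal{SG}'(x,y)\oplus 1$. So if $v=0$, the $(0,-b)$-option has value $1$ and the transfer option must also be nonzero, hence $1$. If $v=1$, the $(0,-b)$-option has value $0$, so the transfer option cannot be $1$ (else the mex would be $2$) and must be $0$. With the transfer option equal to $1$ when $v=0$ and to $0$ when $v=1$, the relation holds.
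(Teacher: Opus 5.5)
Your proof is correct and is essentially the paper's argument. Both rest on the alternating $0,1$ chain along $(-x_1,y_1)$ in $L(b;x_1,y_1)$: after $k$ steps, plus one extra toggle from the $b$-shift in the even case, the extra option $(x-x_2,y+y_2)$ carries the same value as the $(-x_1,y_1)$-option and so cannot change the mex. The only difference is bookkeeping. The paper restricts to $y<b$ and recovers the other rows from \textsc{nim}-periodicity of the three-vector game. You instead do the parity computation entirely inside $L(b;x_1,y_1)$, verifying the column relation for all $y$ via its own \textsc{nim}-periodicity, and transfer it through a single lexicographic induction.
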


\begin{proof}
In the first case, the result is clear when $x< x_2$.  Now consider some $(x,y)$ with $x\geq x_2$, and proceed by induction on $x$.  
Suppose further that $y<b$, as the result for $y\geq b$ will then follow by \textsc{nim}-periodicity.

In $L(b;x_1,y_1)$, since the only \textsc{nim}-values are $0$ and $1$, we have that $\mathcal{SG}(p)=\mathcal{SG}(q)\oplus 1$ whenever $q$ is an option of $p$.  Hence
\[\mathcal{SG}(x,y), \mathcal{SG}(x-x_1,y+y_1), \mathcal{SG}(x-2x_1,y+2y_1), \ldots, \mathcal{SG}(x-kx_1,y+ky_1)\]
is an alternating $0,1$-sequence in $L(b;x_1,y_1)$.  By induction, \[\mathcal{SG}(x-x_1,y+y_1), \mathcal{SG}(x-2x_1,y+2y_1), \ldots, \mathcal{SG}(x-kx_1,y+ky_1)\]
is the same alternating $0,1$-sequence in $L(b;x_1,y_1;x_2,y_2)$.  In particular, since $k$ is odd, $\mathcal{SG}(x-x_1,y+y_1)=\mathcal{SG}(x-kx_1,y+ky_1)$.
But then in $L(b;x_1,y_1;x_2,y_2)$,
\begin{align*}
\mathcal{SG}(x,y)&=\mex\{\mathcal{SG}(x-x_1,y+y_1), \mathcal{SG}(x-x_2,y+y_2)\}\\
&=\mex\{\mathcal{SG}(x-x_1,y+y_1), \mathcal{SG}(x-kx_1,y+ky_1)\}\\
&=\mex\{\mathcal{SG}(x-x_1,y+y_1)\}\\
&=\mathcal{SG}(x-x_1,y+y_1)\oplus 1
\end{align*}
which is the same value as in $L(b;x_1,y_1)$.

In the second case, the proof is similar, except that since $k$ is even, we have $\mathcal{SG}(x-x_1,y+y_1)=\mathcal{SG}(x-kx_1,y+ky_1)\oplus1$.  Then in $L(b;x_1,y_1;x_2,y_2)$,
\begin{align*}
\mathcal{SG}(x,y)&=\mex\{\mathcal{SG}(x-x_1,y+y_1), \mathcal{SG}(x-x_2,y+y_2)\}\\
&=\mex\{\mathcal{SG}(x-x_1,y+y_1), \mathcal{SG}(x-kx_1,y+ky_1+b)\}\\
&=\mex\{\mathcal{SG}(x-x_1,y+y_1), \mathcal{SG}(x-kx_1,y+ky_1)\oplus 1\}&\text{by \textsc{nim}-periodicity}\\
&=\mex\{\mathcal{SG}(x-x_1,y+y_1)\}\\
&=\mathcal{SG}(x-x_1,y+y_1)\oplus 1
\end{align*}
which is the same value as in $L(b;x_1,y_1)$.
\end{proof}

\begin{cor}\label{cor - vector elimination}
    Consider $L(b;x_1,0;x_2,y_2)$, where $y_2<2b$.
    \begin{enumerate}[(a)]
        \item 
If \begin{itemize}
        \item $x_2$ is an odd multiple of $x_1$ and $y_2=0$; or
        \item $x_2$ is an even multiple of $x_1$ and $y_2=b$,
    \end{itemize}
    then $L(b;x_1,0;x_2,y_2)$ has the same \textsc{nim}-values as $L(b;x_1,0)$.
    \item If \begin{itemize}
        \item $x_1=kx_2$, where $k$ is odd, and $ky_2$ is an even multiple of $b$; or 
        \item $x_1=kx_2$, where $k$ is even, and $ky_2$ is an odd multiple of $b$,
    \end{itemize}
    then $L(b;x_1,0;x_2,y_2)$ has the same \textsc{nim}-values as $L(b;x_2,y_2)$.    \end{enumerate}
\end{cor}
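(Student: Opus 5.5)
The plan is to obtain both parts as direct specializations of Lemma \ref{lemma-vector elimination}. The only extra ingredient is that a Lengyel transfer game is symmetric in its two transfer vectors. A vector game depends only on the set of vectors $\{(0,-b),(-x_1,y_1),(-x_2,y_2)\}$, so $L(b;x_1,y_1;x_2,y_2)$ and $L(b;x_2,y_2;x_1,y_1)$ are literally the same ruleset and have the same \textsc{nim}-values. I will read the hypotheses ``$y_2=ky_1 \bmod 2b$'' of Lemma \ref{lemma-vector elimination} as congruences modulo $2b$, which is how its proof uses them: through \textsc{nim}-periodicity, or equivalently Corollary \ref{cor: y can be reduced mod 2b}.

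For part (a), apply Lemma \ref{lemma-vector elimination} directly with $y_1=0$ and $x_2=kx_1$.
\begin{itemize}
\item If $k$ is odd, the lemma's condition is $y_2\equiv k\cdot 0=0 \pmod{2b}$. Since $0\le y_2<2b$, this is exactly $y_2=0$.
\item If $k$ is even, the condition is $y_2\equiv 0+b \pmod{2b}$. With $0\le y_2<2b$, this is exactly $y_2=b$.
\end{itemize}
In either case the lemma says the game has the same \textsc{nim}-values as $L(b;x_1,0)$.

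For part (b), first use the symmetry to rewrite the game as $L(b;x_2,y_2;x_1,0)$. Then apply Lemma \ref{lemma-vector elimination} with $(x_2,y_2)$ as the base vector and $(x_1,0)$ as the candidate for elimination, where $x_1=kx_2$.
\begin{itemize}
\item If $k$ is odd, the condition becomes $0\equiv ky_2 \pmod{2b}$, i.e.\ $ky_2$ is an even multiple of $b$.
\item If $k$ is even, the condition becomes $0\equiv ky_2+b\pmod{2b}$, i.e.\ $ky_2\equiv b \pmod{2b}$, so $ky_2$ is an odd multiple of $b$.
\end{itemize}
The lemma then gives the same \textsc{nim}-values as $L(b;x_2,y_2)$.

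There is no real obstacle here. The one point needing care is that in (b) the eliminated vector has second coordinate $0$, which is generally not literally equal to $ky_2$, so the congruence-mod-$2b$ reading of the lemma is essential. That reading is justified exactly as in the second case of the lemma's proof, by replacing $\mathcal{SG}(x-kx_2,y+ky_2)$ with $\mathcal{SG}(x-x_1,y)$ up to $\oplus 1$ via \textsc{nim}-periodicity.
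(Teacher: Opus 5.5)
Your proposal is correct and is the intended argument. The paper gives no separate proof of this corollary and treats it as an immediate specialization of Lemma \ref{lemma-vector elimination}: take $y_1=0$ for part (a), and for part (b) relabel the two transfer vectors, which is legitimate because the ruleset is just a set of vectors. Your reading of the lemma's hypothesis as a congruence modulo $2b$ is also what the lemma's proof actually uses, via \textsc{nim}-periodicity.
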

\noindent Note that the last case is precisely what happens in Theorem \ref{Main Theorem 1}(b).  \textbf{For the next two sections, assume the following:}
\begin{itemize}
    \item $0\leq y_1<2b$
    \item $0<y_2<2b$
    \item $\gcd\{x_1,x_2\}=1$
    \item $\gcd\{b,y_1,y_2\}=1$ (or $\gcd\{b,y_2\}=1$ in the case $y_1=0$)
    \item Lemma \ref{lemma-vector elimination} (or Corollary \ref{cor - vector elimination} in the case $y_1=0$) does not apply
\end{itemize}

\section{Eventual diagonal periodicity}\label{section - eventual diagonal periodicity}

In this section, we show that with two exceptions, $L(b;x_1,y_1;x_2,y_2)$  has eventual diagonal periodicity due to the eventual disappearance of bad pairs of $2$s.  In one of the exceptional cases, we have diagonal periodicity anyway.  In the remaining case, we do not have diagonal periodicity, but the chart of \textsc{nim}-values is trivial.  

By a \emph{chain of bad pairs of $2$s}, we mean a sequence $(x,y), (x+m,y+n), (x+2m,y+2n), \ldots$ where each adjacent pair is a bad pair of $2$s.



    

\begin{lem}\label{lem: stable chains observe diagonal periodicity}
    Suppose $\mathcal{SG}^*(x,y)=\mathcal{SG}^*(x+x_1+x_2,y-y_1-y_2)=2$, and that $(x,y)$ is not part of two bad pairs of $2$s.  Then $\mathcal{SG}(x-x_1,y+y_1)=\mathcal{SG}(x+x_2,y-y_2)$ and $\mathcal{SG}(x-x_2,y+y_2)=\mathcal{SG}(x+x_1,y-y_1)$.
\end{lem}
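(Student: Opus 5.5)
Throughout, write $P=(x,y)$ and $Q=(x+x_1+x_2,y-y_1-y_2)$. Let $A=(x+x_1,y-y_1)$ and $B=(x+x_2,y-y_2)$; these are exactly the two transfer options of $Q$. Let $C=(x-x_1,y+y_1)$ and $D=(x-x_2,y+y_2)$ be the two transfer options of $P$. The claim is that $\mathcal{SG}(C)=\mathcal{SG}(B)$ and $\mathcal{SG}(D)=\mathcal{SG}(A)$. The plan is a local parity argument in the spirit of Lemmas \ref{lemma: refinement of Larsson} and \ref{lem-no bad pairs of twos implies diagonal periodicity}, using only \textsc{nim}-periodicity and the mex rule.

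\emph{Step 1: both pairs take the values $0$ and $1$.} As in the proof of Lemma \ref{bad pair of 2s less than or equal to}, $\mathcal{SG}^*(P)=2$ together with \textsc{nim}-periodicity gives $\{\mathcal{SG}(C),\mathcal{SG}(D)\}=\{0,1\}$. The same argument applied to $Q$ gives $\{\mathcal{SG}(A),\mathcal{SG}(B)\}=\{0,1\}$. Since $\mathcal{SG}(A)=\mathcal{SG}(B)\oplus1$ and $\mathcal{SG}(C)=\mathcal{SG}(D)\oplus 1$, it suffices to prove just one of the two equalities, say $\mathcal{SG}(D)=\mathcal{SG}(A)$.

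\emph{Step 2: the two auxiliary points.} Let $E=(x+x_1-x_2,\,y-y_1+y_2)$, the second transfer option of $A$ (the first being $P$). Let $F=(x+x_2-x_1,\,y-y_2+y_1)$, the second transfer option of $B$. Two facts link these to the configuration:
\begin{itemize}
\item $E-P=P-F=(x_1-x_2,\,y_2-y_1)$, so $\{E,P\}$ is a bad pair of $2$s if $\mathcal{SG}^*(E)=2$, and $\{P,F\}$ is one if $\mathcal{SG}^*(F)=2$;
\item $D=E-(x_1,-y_1)$ and $C=F-(x_1,-y_1)$, so $D$ is an option of $E$ and $C$ is an option of $F$.
\end{itemize}
By hypothesis $P$ is not part of two bad pairs, so at least one of $E,F$ (say $E$; the other case is symmetric) has value in $\{0,1\}$.

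\emph{Step 3: the parity chase.} The options of $A$ are $P$ (value $\geq 2$), $E$, and possibly $A-(0,b)$, whose value is $\mathcal{SG}(A)\oplus1$ by \textsc{nim}-periodicity. Checking both possible values of $\mathcal{SG}(A)$ against the mex rule:
\begin{itemize}
\item if $\mathcal{SG}(A)=0$, then no option has value $0$, so $\mathcal{SG}(E)\neq 0$;
\item if $\mathcal{SG}(A)=1$, then no option has value $1$, so $\mathcal{SG}(E)\neq 1$.
\end{itemize}
In either case $\mathcal{SG}(E)\neq\mathcal{SG}(A)$. Since $\mathcal{SG}(E)\in\{0,1\}$, this gives $\mathcal{SG}(E)=\mathcal{SG}(A)\oplus1$.

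Since $D$ is an option of $E$, we get $\mathcal{SG}(D)\neq\mathcal{SG}(E)$. As $\mathcal{SG}(D)\in\{0,1\}$, this forces $\mathcal{SG}(D)=\mathcal{SG}(E)\oplus 1=\mathcal{SG}(A)$, as required. In the symmetric case $F\in\{0,1\}$, the same chase gives $\mathcal{SG}(F)=\mathcal{SG}(B)\oplus1$, and then $\mathcal{SG}(C)=\mathcal{SG}(B)$.

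\emph{Main obstacle.} I expect the hardest step to be the boundary cases where $E$ or $F$ has a negative coordinate (for instance $x+x_1<x_2$), so that it is not a position at all. Then it cannot form a bad pair with $P$, but it also is not available as an option of $A$ or $B$. In that case I would argue directly: $A$'s options are only $P$ and possibly $A-(0,b)$. So $\mathcal{SG}(A)=0$ unless $A-(0,b)$ exists and has value $0$, which is inconsistent with $\mathcal{SG}(A-(0,b))=\mathcal{SG}(A)\oplus 1$. Hence $A$ is pinned down, and I would then use the other point, which must then be a genuine position, or rerun the argument with $F$. I would settle these edge cases by a short separate check, noting that whenever $E$ is missing, the coordinates force $F$ to exist, or make the statement vacuous.
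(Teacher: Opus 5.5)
Your argument is correct and is essentially the paper's proof: your $E$ and $F$ are the points labelled $D$ and $C$ in the paper's figure. The paper likewise uses \textsc{nim}-periodicity to force the value set $\{0,1\}$ on both pairs of transfer options, and then chases parity through whichever bad-pair partner of $(x,y)$ has value in $\{0,1\}$. One small slip: $C=F-(x_2,-y_2)$, not $F-(x_1,-y_1)$.

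The boundary case you flag cannot occur. Step 1 already shows $C$ and $D$ are genuine positions, so $x\ge\max\{x_1,x_2\}$, and hence $E$ and $F$ always exist. This matters, because your fallback claim that $\mathcal{SG}(A-(0,b))=0$ would be inconsistent is false; it simply gives $\mathcal{SG}(A)=1$.
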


\begin{figure}[h]
    \centering
    \begin{tikzpicture}[scale=0.7] 
    \node[black] (A)  at (0,0) {$A_1$};
    \node[black] (C) at (0.6,1.1) {$2/3$};
    \node[black] (E) at (-3.4,0.7) {$A_0$};
    \node[black] (G) at (-4,-0.4) {$2/3$};
    \node[black] (H) at (-0.6,-1.1) {$D$};
    \node[black] (I)  at (-4.6,-1.5) {$B_1$};
    \node[black] (J) at (-7.4,0.3) {$C$};
    \node[black] (K) at (-8,-0.8) {$B_0$};
    \draw [->] (A) -- (C);
    \draw [->] (G) -- (E);
    \draw [->] (E) -- (C);
    \draw [->] (G) -- (A);
    \draw [->] (H) -- (A);
    \draw [->] (I) -- (G);
    \draw [->] (I) -- (H);
    \draw [->] (J) -- (E);
    \draw [->] (K) -- (J);
    \draw [->] (K) -- (G);
    \end{tikzpicture}
    \caption{Proof of Lemma \ref{lem: stable chains observe diagonal periodicity}.  The arrows represent the vectors $(x_1,-y_1)$ and $(x_2,-y_2)$.  }
    \label{fig: Proof lemma about chains of bad pairs of 2s}
\end{figure}

\begin{proof}
    Observe Figure \ref{fig: Proof lemma about chains of bad pairs of 2s}, where $(x,y)$ and $(x+x_1+x_2,y-y_1-y_2)$ are the positions marked $2/3$, and either $\mathcal{SG}^*(C)\neq 2$ or $\mathcal{SG}^*(D)\neq 2$.  Suppose $\mathcal{SG}^*(C)\neq 2$. By \textsc{nim}-periodicity, $(x,y-b)$ and $(x+x_1+x_2,y-y_1-y_2-b)$, if they exist, also each have value $2$ or $3$.  Thus $\{A_0, A_1\}=\{B_0, B_1\}=\{0,1\}$.  Without loss of generality, $A_0=0$ and $A_1=1$.  Then $\mathcal{C}=1$, so $B_0=0$ and $B_1=1$.
\end{proof}

\begin{prop}\label{prop: bad pairs eventually disappear}
    Suppose  $y_1+y_2\neq 2b$.  Suppose further that either (i) $x_1\neq x_2$, or (ii) $x_1=x_2=1$ and $\lvert y_1-y_2 \rvert\neq b$.
    Then any bad pairs of $2$s in $L(b;x_1,y_1;x_2,y_2)$ eventually disappear.
\end{prop}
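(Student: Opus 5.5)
The plan is to track how the set of bad pairs of $2$s evolves from one $2$-block to the next, using Lemma \ref{bad pair of 2s less than or equal to} and Lemma \ref{lem: stable chains observe diagonal periodicity}. Lemma \ref{bad pair of 2s less than or equal to} shows that the $2/3$ entries can only disappear under the diagonal shift $(x_1+x_2,-y_1-y_2)$; they are never created. Hence the set of bad pairs, transported back along the diagonal, is non-increasing. By \textsc{nim}-periodicity and Lemma \ref{lem - eventual horizontal periodicity}, each $2$-block is determined by finitely many data, namely the first $2b$ rows. So it suffices to show the following: if a bad pair persists forever, we reach a contradiction with the hypotheses. Concretely, I would suppose that from some column onward the configuration of bad pairs is stable under the diagonal shift. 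Then every $2$ lies in a chain of bad pairs that is translated rigidly by $(x_1+x_2,-y_1-y_2)$.

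The first step is to look at the ends of a maximal chain of bad pairs. A chain runs in direction $(x_1-x_2,y_2-y_1)$, so its endpoint is not part of two bad pairs. By Lemma \ref{lem: stable chains observe diagonal periodicity}, the options of the endpoint and of its diagonal translate carry the same $0/1$ values, crossed between the two moves. I would propagate this along the chain. Inside a chain, each shared option is an option of two $2$s, so it must take the value opposite to what the neighbouring $2$s need. Propagating from the free endpoint forces an alternating $0,1$ pattern along the line of shared options. It also forces the relation $\mathcal{SG}(P)=\mathcal{SG}(P+(x_1+x_2,-y_1-y_2))\oplus\epsilon$ along that line, with a fixed $\epsilon$.

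The second step is to combine this with \textsc{nim}-periodicity in $(0,-b)$ and with Corollary \ref{corollary: 2s vs 3s}. A rigid, stable chain must close up consistently modulo the vertical period $2b$. Translating the chain by the diagonal $t$ times and comparing it with the $0/1$ alternation yields a congruence. In case (i), $x_1\neq x_2$ means the chain has nonzero horizontal extent. Because there is at most one chain per $2$-block direction, the chain must also fit inside a $2$-block of width $\min\{x_1,x_2\}$ (Lemma \ref{lemma: ac rule}). Using $\gcd\{x_1,x_2\}=1$, I expect the constraint to reduce to $y_1+y_2\equiv 0 \pmod{2b}$, which is excluded. In case (ii), $x_1=x_2=1$, so the chain is vertical with step $|y_1-y_2|$. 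Stability then forces the two ends of the column to be swapped by \textsc{nim}-periodicity, that is, $|y_1-y_2|\equiv b$. This is also excluded.

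The main obstacle is the second step: making the propagation argument along a chain rigorous and extracting exactly the congruence $y_1+y_2\equiv 0\bmod 2b$ (resp.\ $|y_1-y_2|=b$). The difficulty is handling chains that wrap around modulo $2b$, since such a chain has no free endpoint. I would treat that subcase first. It should be exactly the situation in which the chain is infinite and periodic in the vertical direction. There I would argue directly that a full periodic column of $2$s forces all options to alternate, and that this alternation forces the excluded congruences. If that direct route fails, I would fall back on a finite check of the possible local configurations of a $2$-block, in the style of Figure \ref{L(b;1,0;1,b-1)}.
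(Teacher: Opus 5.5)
Your reduction to a configuration that is rigid under the diagonal shift (via Lemma~\ref{bad pair of 2s less than or equal to}) matches the paper, as do your use of Lemma~\ref{lem: stable chains observe diagonal periodicity} at a free end of a chain and your separate treatment of chains that wrap around modulo $2b$. But the step you flag as the main obstacle is where the proof actually lives, and your proposal has no mechanism for it. Values in $\{0,1\}$ that alternate along a line and are invariant under $(x_1+x_2,-y_1-y_2)$ are perfectly self-consistent. Corollary~\ref{corollary: 2s vs 3s} only says where $2$s versus $3$s sit, so nothing yet produces a congruence.

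The missing idea is a band-forcing observation. Suppose both transfer options of $R=(x,y)$ have $\mathcal{SG}^*=2$. Reduce $y$ modulo $2b$ by \textsc{nim}-periodicity. If $y \bmod 2b<b$ there is no $(0,-b)$ option, so $\mathcal{SG}(R)$ is a mex of values in $\{2,3\}$, hence $0$; otherwise $\mathcal{SG}(R)=1$. This applies to the diagonal translates of your shared options. If $Q=P+(-x_1,y_1)=P'+(-x_2,y_2)$ for adjacent $2$s $P,P'$ of a chain, then the transfer options of $Q+(x_1+x_2,-y_1-y_2)$ are exactly $P$ and $P'$; this is the node $A_0$ of Figure~\ref{fig: Proof lemma about chains of bad pairs of 2s}. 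The same applies to $Q$ itself: its transfer options are backward diagonal translates of $P$ and $P'$, hence $2/3$ by Lemma~\ref{bad pair of 2s less than or equal to}.

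With band-forcing, a rigid chain with a free end is impossible, and no alternation is needed. Lemma~\ref{lem: stable chains observe diagonal periodicity}, applied at each translate of the end, together with rigidity makes $\mathcal{SG}(A_0+t(x_1+x_2,-y_1-y_2))$ independent of $t$. Band-forcing makes that value the band indicator of a row moving by $-(y_1+y_2)\not\equiv 0 \pmod{2b}$. Since $\gcd\{2b,y_1+y_2\}\le b$, that row visits both bands, a contradiction. For a wrapping chain in case (ii), the shared options alternate and are band-forced. So every vertical step $m=\lvert y_1-y_2\rvert$ must switch bands, and over a full cycle this forces $m=b$.

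This also corrects your bookkeeping of hypotheses:
\begin{itemize}
\item The condition $y_1+y_2\neq 2b$ kills free-ended chains in both cases, not only in case (i).
\item $\gcd\{x_1,x_2\}=1$ plays no role.
\item Lemma~\ref{lemma: ac rule} yields no congruence. It only bounds chain length when $x_1\neq x_2$, so that unbounded or wrapping chains cannot arise in case (i).
\item $\lvert y_1-y_2\rvert\neq b$ is needed only for wrapping chains in case (ii).
\end{itemize}
That is exactly the paper's organisation. It first proves a claim that persistent bad pairs force arbitrarily long chains, using the free-end argument above. It then applies the $2$-block rule in case (i) and the alternation argument in case (ii).
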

    
\begin{proof} Assume toward a contradiction that the bad pairs of $2$s do not eventually disappear.  
\begin{claim}
    For any $k$, we can find a chain of at least $k$ bad pairs of $2$s.
\end{claim}
\begin{proof}[proof of claim]
    Suppose not.  Then by Lemma \ref{bad pair of 2s less than or equal to}, there is some column $x_0$, after which all chains of bad pairs of $2$s are of the minimal length, say $k_0>1$.
    
    By Lemma \ref{lem: stable chains observe diagonal periodicity}, along with the fact that $y_1+y_2\neq 2b$, we can select the chain far enough to the right, so that the position $(x,y)$ (corresponding to $A_0$ or $A_1$ in Figure \ref{fig: Proof lemma about chains of bad pairs of 2s})
    satisfies $\mathcal{SG}^*(x-x_1,y+y_1)=\mathcal{SG}^*(x-x_2,y+y_2)=2$, and either
    \begin{itemize}
        \item $\mathcal{SG}(x,y)=1$ and $0\leq y<b$; or
        \item $\mathcal{SG}(x,y)=0$ and $b\leq y<2b$.
    \end{itemize}  
    
    In either case, \textsc{nim}-periodicity gives us a node $(x,y')$ with $y'<b$, $\mathcal{SG}(x,y')=1$, and both options of $(x,y')$ having value $2$ or $3$, a contradiction.
\end{proof}

In Case (i), choose a large enough $k$ to violate Lemma \ref{lemma: ac rule}.

In Case (ii),  let $(x,y+jm)$, where $m=\lvert y_1-y_2\rvert$, for $0\leq j\leq 2b$ be a chain of bad pairs of $2$s.  Letting $x'=x-x_1, y'=y+y_1$, we obtain a sequence of positions $(x',y'+jm)$, whose values form an alternating sequence of $0$s and $1$s.  Since $m\neq b$, there must be some $j$ so that $y'+jm=2bl+i$ for some $0\leq i<b$ and $\mathcal{SG}(x',y'+jm)=1$.  By \textsc{nim}-periodicity, $\mathcal{SG}(x',i)=\mathcal{SG}(x',y'+jm-2bl)=1$.  However,  since $i<b$,  $(x',i)$ only has options from the chain of bad pairs of $2$s, a contradiction.
\end{proof}



Let's now consider the exceptions to the above proposition.  

\begin{prop}
    Suppose $y_1+y_2=2b$.  
    Then $L(b;x_1,y_1;x_2,y_2)$ has diagonal periodicity.  In particular, it has no preperiod and the horizontal period is $x_1+x_2 \;(=g(b,x_1,y_1,x_2,y_2))$.  
\end{prop}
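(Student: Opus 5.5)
The key observation is that when $y_1+y_2=2b$, the diagonal direction $(x_1+x_2,-(y_1+y_2))=(x_1+x_2,-2b)$ differs from the pure horizontal translation $(x_1+x_2,0)$ by exactly one vertical period. So I plan to prove the stronger statement
\[\mathcal{SG}(x,y)=\mathcal{SG}(x+x_1+x_2,y)\quad\text{for all }(x,y)\in\mathbb{N}^2.\]
Once this holds, \textsc{nim}-periodicity in $(0,-b)$ applied twice gives $\mathcal{SG}(x+x_1+x_2,y-2b)=\mathcal{SG}(x+x_1+x_2,y)=\mathcal{SG}(x,y)$ whenever $y\ge 2b$. That is diagonal periodicity, and it holds even without passing to $\mathcal{SG}^*$. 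The horizontal period then divides $x_1+x_2$ with no preperiod. Also $\gcd\{2b,y_1+y_2\}=2b$ gives $g=x_1+x_2$, as claimed.

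To prove the displayed identity I will use Lemma~\ref{lem-no bad pairs of twos implies diagonal periodicity}, which requires showing there are no bad pairs of $2$s anywhere. Suppose $(x,y)$ and $(x',y')=(x-x_1+x_2,\,y+y_1-y_2)$ are a bad pair. They share the option $(x-x_1,y+y_1)$, which must have value $0$ or $1$. Each of them has exactly two options in the first $b$ rows; by \textsc{nim}-periodicity I may reduce to representatives with $0\le y<b$ after shifting by multiples of $2b$, provided I track parity carefully. The remaining options are $(x-x_2,y+y_2)$ and $(x'-x_1,y'+y_1)=(x-2x_1+x_2,\,y+2y_1-y_2)$.

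Since $y_2\equiv -y_1 \pmod{2b}$, these are exactly the positions a diagonal step $(x_1+x_2,-2b)$ relates back to the shared option. I will argue by induction on lexicographic order, assuming the identity for all smaller positions, and show the configuration forces a contradiction. Concretely, by induction $\mathcal{SG}(x-x_2,y+y_2)=\mathcal{SG}(x-x_1-x_2,y+y_2-2b+2b)$. Since $y_2-2b=-y_1$, this equals the value at the position reached from $(x-x_1,y+y_1)$ by the move $(-x_2,y_2)$, reduced mod $2b$. So the options of the pair coincide with options of the shared option. A bad pair then makes the shared option have a $\{0,1\}$ value equal to one of its own options' values, which is impossible.

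An alternative route avoids bad pairs entirely. Run the direct induction on $(x,y)$ with $x\ge x_1+x_2$: the options of $(x,y)$ are $(x-x_i,y+y_i)$ and $(x,y-b)$, and these are the translates by $(x_1+x_2,0)$ of the options of $(x-x_1-x_2,y)$. The exception is boundary options that exist for one position but not the other. Horizontal translation preserves the $y$-coordinate, so the domains of $f_3$ agree. The domain of $f_i$ is $x\ge x_i$, so the only discrepancy is in the first $x_1+x_2$ columns, where the translate has an option that the original lacks. Hence the main obstacle is the base block: showing column $x\in[x_1+x_2,\,x_1+x_2+\max x_i)$ agrees with column $x-x_1-x_2$, even though the latter lacks some options.

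For that step, the first attempt will be to use $y_1+y_2\equiv 0\pmod{2b}$ as a composite move. Moving by $(-x_1,y_1)$ and then $(-x_2,y_2)$ lands at the same value, by \textsc{nim}-periodicity, as $(x-x_1-x_2,y)$. A two-step mex argument, as in Lemma~\ref{lemma: refinement of Larsson}, should then show the extra option never changes the mex. If the direct argument gets tangled, I would fall back on the bad-pair route via Lemma~\ref{bad pair of 2s less than or equal to}, checking only the first $2$-block.
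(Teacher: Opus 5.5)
Your reduction of the statement is fine. Once $\mathcal{SG}(x,y)=\mathcal{SG}(x+x_1+x_2,y)$ holds everywhere, \textsc{nim}-periodicity gives exact diagonal periodicity. Also, ``the period divides $x_1+x_2$'' is the right conclusion: in $L(1;2,1;3,1)$ every value is $y\bmod 2$, so the horizontal period there is $1$.

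The gap is in proving that identity. Your main route needs the absence of bad pairs of $2$s, and that is false when $y_1+y_2=2b$. In $L(3;1,2;1,4)$, the position $(1,0)$ has options $(0,2),(0,4)$ of values $0,1$, and $(1,2)$ has options $(0,4),(0,6)$ of values $1,0$. So $\mathcal{SG}(1,0)=\mathcal{SG}(1,2)=2$, and the two differ by $(x_1-x_2,y_2-y_1)=(0,2)$. That is a bad pair sharing the option $(0,4)$, of value $1$, and it recurs in every $2$-block. It is not special to $x_1=x_2$: in $L(2;2,1;3,3)$ the positions $(3,2)$ and $(4,0)$, of values $3$ and $2$, form a bad pair.

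Your sketched contradiction also fails at a concrete step. The equality $\mathcal{SG}(x-x_2,y+y_2)=\mathcal{SG}(x-x_1-x_2,y+y_2)$ is a horizontal shift by $x_1$, not by $x_1+x_2$, so the inductive hypothesis does not give it. Moreover, the position reached from the shared option by $(-x_2,y_2)$ is $(x-x_1-x_2,y+2b)$, not $(x-x_1-x_2,y+y_2)$. For the same reason, your fallback (inspect the first $2$-block, then apply Lemmas \ref{bad pair of 2s less than or equal to} and \ref{lem-no bad pairs of twos implies diagonal periodicity}) cannot work.

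Your second route correctly reduces everything to the columns $[x_1+x_2,\,x_1+x_2+\max x_i)$, but the decisive step is only asserted. The argument of Lemma \ref{lemma: refinement of Larsson} breaks exactly in its value-$1$ case. There, one rules out that both positions $(x+x_i,y-y_i)$ above a value-$1$ position $(x,y)$ are $2$s by showing that all options of $(x,y)$ would then be nonzero. Here, however, $(x,y)$ may have the option $(x,y-b)$ of value $0$. That is how the bad pair above survives: its shared option $(0,4)$ has the option $(0,1)$ of value $0$.

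The missing idea is to fix the row residue so that the positions being compared have only their two transfer options. Take $2b\le y<3b$ and $T=(x+x_1+x_2,y-2b)$. The only options of $T$ are $(x+x_1,y-y_1)$ and $(x+x_2,y-y_2)$, and each of these has $(x,y)$ as an option. The cases then go as follows.
\begin{itemize}
\item If $\mathcal{SG}(x,y)=0$, then $\mathcal{SG}(T)=0$ immediately.
\item If $\mathcal{SG}(x,y)=1$, Lemma \ref{bad pair of 2s less than or equal to} rules out $\mathcal{SG}^*(T)=2$. Suppose $\mathcal{SG}(T)=0$. Then both $(x+x_i,y-y_i)$ have value $2$ or $3$. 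By the same lemma, whichever of $(x-x_1,y+y_1)$ and $(x-x_2,y+y_2)$ exist also have value $2$ or $3$. By $2b$-periodicity, so do the only options of $(x,y-2b)$. This forces $\mathcal{SG}(x,y-2b)=0$, whereas it equals $\mathcal{SG}(x,y)=1$.
\item The $2/3$ case then follows from the other two.
\end{itemize}
\textsc{nim}-periodicity covers the remaining rows. This argument is uniform in $x$, needs no separate base block, and accommodates bad pairs rather than ruling them out.
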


\begin{figure}[h]
    \centering
    $\begin{tblr}{ccccccccc}
         0&0&0&0&1&2&2&2&2\\
         0&0&0&0&1&2&2&2&2\\
         0&0&0&0&0&0&0&0&2\\
         0&0&0&0&0&2&2&2&1\\
         0&0&0&0&0&2&2&2&2\\
         1&1&1&1&0&3&3&3&3\\
         1&1&1&1&0&3&3&3&3\\
         1&1&1&1&1&1&1&1&3\\
         1&1&1&1&1&3&3&3&0\\
         1&1&1&1&1&3&3&3&3
    \end{tblr}$
    \qquad\qquad
    $\begin{tblr}{cc}
         0&0\\
         0&2\\
         0&0\\
         1&1\\
         1&3\\
         1&1
    \end{tblr}$\qquad\qquad
    $\begin{tblr}{cc}
         0&2\\
         0&2\\
         0&2\\
         1&3\\
         1&3\\
         1&3
    \end{tblr}$
    \caption{One period each of the \textsc{nim}-values of $L(5;5,4;7,3)$, $L(3;1,2;1,4)$, and $L(3;1,2;1,5)$}
    \label{fig:$L(5;5,4;7,3)$, $L(3;1,2;1,4)$, and $L(3;1,2;1,5)$}
\end{figure}

See for example Figure \ref{fig:$L(5;5,4;7,3)$, $L(3;1,2;1,4)$, and $L(3;1,2;1,5)$} for the values of $L(5;5,4;7,3)$ and $L(3;1,2;1,4)$.  

\begin{proof}
    We will show that $\mathcal{SG}^*(x,y)=\mathcal{SG}^*(x+x_1+x_2,y-2b)$ in the case $2b\leq y<3b$. All other cases follow by \textsc{nim}-periodicity.

    Suppose $\mathcal{SG}^*(x,y)=0$.  Then $\mathcal{SG}^*(x+x_1,y-y_1), \mathcal{SG}^*(x+x_2,y-y_2)\neq 0$, so $\mathcal{SG}^*(x+x_1+x_2,y-2b)=0$.

    Next, suppose $\mathcal{SG}^*(x,y)=1$.  By Lemma \ref{bad pair of 2s less than or equal to}, $\mathcal{SG}^*(x+x_1+x_2,y-2b)\neq 2$, so assume toward a contradiction that $\mathcal{SG}^*(x+x_1+x_2,y-2b)=0$.  This means $\mathcal{SG}^*(x+x_1,y-y_1),\mathcal{SG}^*(x+x_2,y-y_2)=2$.  Hence by Lemma \ref{bad pair of 2s less than or equal to} again, $\mathcal{SG}^*(x-x_1,y+y_1)$ and $\mathcal{SG}^*(x-x_2,y+y_2)$, when they exist, are $2$.  However, by \textsc{nim}-periodicity, this means that $\mathcal{SG}^*(x-x_1,y+y_1-2b)$ and $\mathcal{SG}^*(x-x_2,y+y_2-2b)$, when they exist, are also $2$.  But these are the only possible options of $(x,y-2b)$, contradicting that $\mathcal{SG}^*(x,y-2b)=\mathcal{SG}^*(x,y)=1$.

    Finally, suppose $\mathcal{SG}^*(x,y)=2$. Then since $\mathcal{SG}^*(x,y-b)=2$, we must have $\{\mathcal{SG}^*(x-x_1,y+y_1), \mathcal{SG}^*(x-x_2,y+y_2)\}=\{0,1\}$.  Then by the previous cases,
     $\{\mathcal{SG}^*(x+x_1,y-y_1), \mathcal{SG}^*(x+x_2,y-y_2)\}=\{0,1\}$.  Hence $\mathcal{SG}^*(x+x_1+x_2,y-2b)=2$.
\end{proof}

\begin{prop}\label{prop - the trivial case generalized}
    Suppose $x_1=x_2=1$ and $\lvert y_1-y_2 \rvert=b$.  Then the \textsc{nim}-values are as in the third part of Figure \ref{fig:$L(5;5,4;7,3)$, $L(3;1,2;1,4)$, and $L(3;1,2;1,5)$}. In particular, there is no preperiod, and the horizontal period is $2$. 
\end{prop}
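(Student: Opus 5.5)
We will prove the pattern directly by induction on the column index $x$. The target statement is this: every even-numbered column consists of $b$ $0$s followed by $b$ $1$s, and every odd-numbered column consists of $b$ $2$s followed by $b$ $3$s, each repeated with vertical period $2b$. This is the third array in Figure \ref{fig:$L(5;5,4;7,3)$, $L(3;1,2;1,4)$, and $L(3;1,2;1,5)$}, and it immediately gives no preperiod and horizontal period $2$. By \textsc{nim}-periodicity ($\mathcal{SG}(x,y+b)=\mathcal{SG}(x,y)\oplus 1$), it suffices in every column to determine the values in rows $0\leq y<b$. Those positions have exactly the options $(x-1,y+y_1)$ and $(x-1,y+y_2)$ when $x\geq 1$, since $x_1=x_2=1$ and the move $(0,-b)$ is unavailable. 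When $x=0$ they have no options at all.

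The base case is column $0$. Here the first $b$ entries are terminal, hence $0$, and \textsc{nim}-periodicity makes the next $b$ entries $1$. This is the claimed even-column pattern.

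The key observation for the inductive step is that $|y_1-y_2|=b$ forces $y_2\equiv y_1+b \pmod{2b}$. Combining this with \textsc{nim}-periodicity (or Corollary \ref{cor: y can be reduced mod 2b}), we get
\[\mathcal{SG}(x-1,y+y_2)=\mathcal{SG}(x-1,y+y_1)\oplus 1\]
for every $y$. So the two options of $(x,y)$ with $y<b$ always have values $\{a,a\oplus 1\}$, where $a=\mathcal{SG}(x-1,y+y_1)$.

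For the inductive step, suppose column $x-1$ has the claimed form. If $x-1$ is even, its values lie in $\{0,1\}$, so the option set of each $(x,y)$ with $y<b$ is exactly $\{0,1\}$. The mex is then $2$, and \textsc{nim}-periodicity turns rows $b,\ldots,2b-1$ into $3$s; this is the odd-column pattern. If $x-1$ is odd, its values lie in $\{2,3\}$, so the option set is $\{2,3\}$ and the mex is $0$. \textsc{nim}-periodicity then gives $1$s in rows $b,\ldots,2b-1$, which is the even-column pattern. This completes the induction. The values are therefore periodic in $(2,0)$ from the very first column, and the period cannot be $1$ because adjacent columns differ. I do not anticipate a real obstacle; the only point requiring care is justifying that the two options in rows $y<b$ are \textsc{nim}-partners, which is exactly the congruence $y_2\equiv y_1+b \pmod{2b}$.
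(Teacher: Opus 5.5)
Your proof is correct, but it follows a different route from the paper. The paper's proof only exhibits the array for the single instance $L(3;1,2;1,5)$ and asserts that all other cases are similar. You instead isolate the mechanism behind the picture. Since $|y_1-y_2|=b$ gives $y_2\equiv y_1+b\pmod{2b}$, \textsc{nim}-periodicity makes the two transfer options of every position in the first $b$ rows \textsc{nim}-partners. The option set is therefore $\{0,1\}$ or $\{2,3\}$, according to the type of the previous column, and the alternation follows by a clean induction.

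The paper's version buys brevity and a visual check. Yours is an actual proof for all $b$, $y_1$, $y_2$, and it generalizes with no extra work:
\begin{itemize}
\item It needs only $|y_1-y_2|\equiv b\pmod{2b}$, which is exactly the form invoked in the proof of Theorem \ref{conjecture - big one}.
\item With $x_1=x_2=m$ arbitrary, the same induction applies; the base case is now the first $m$ columns, whose positions in rows $y<b$ have no options. It gives alternating blocks of $m$ columns of each type, hence horizontal period $2m=x_1+x_2$ directly, without Corollary \ref{lemma:dilation}.
\end{itemize}

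Finally, your key observation is the mirror image of the $k=1$ case of Lemma \ref{lemma-vector elimination}. There the two options always have equal values, so one move is redundant. Here they always differ by $\oplus 1$, which forces the mex to alternate between $2$ and $0$.
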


\begin{proof}
    See Figure \ref{fig:$L(5;5,4;7,3)$, $L(3;1,2;1,4)$, and $L(3;1,2;1,5)$} for the case of $L(3;1,2;1,5)$.  All other cases are similar.
\end{proof}

Note that we do not have diagonal periodicity, unless we additionally require $y_1+y_2=2b$.  This happens precisely when $\{y_1,y_2\}=\{\frac{b}{2},\frac{3b}{2}\}$.  Theorem \ref{Main Theorem 1}(a) is a special case of this. See also Figure \ref{fig:L(1;1,0;1,1)}.  

We are now ready to prove our second main theorem.

\begin{proof}[proof of Theorem \ref{conjecture - big one}]
By the preceding propositions, the only case when $L(b;x_1,y_1;x_2,y_2)$ does not have diagonal periodicity is precisely when $x_1=x_2$, $\lvert y_1-y_2 \rvert=b\bmod 2b$, and $y_1+y_2\neq 2b\bmod 2b$.  However, combining Proposition \ref{prop - the trivial case generalized} with Corollary \ref{lemma:dilation} yields a horizontal period of $(x_1+x_2)$, which is a factor of $g(b,x_1,y_1,x_2,y_2)$.

So suppose $L(b;x_1,y_1;x_2,y_2)$ has diagonal periodicity, and that $2b=k\cdot\gcd\{2b,y_1+y_2\}$.  In other words, $k(y_1+y_2)=0\bmod 2b$.  
Then by \textsc{nim}-periodicity and diagonal periodicity, 
\[\mathcal{SG}^*(x,y)=\mathcal{SG}^*(x+k(x_1+x_2),y-k(y_1+y_2))=\mathcal{SG}^*(x+k(x_1+x_2),y).\]
Finally, Corollary \ref{corollary: 2s vs 3s} implies $\mathcal{SG}(x,y)=\mathcal{SG}(x+k(x_1+x_2),y)$.
\end{proof}

\begin{rem}
    This proof shows how \textsc{nim}-periodicity and diagonal periodicity together imply horizontal periodicity, with a (maximum possible) period specified by the same parameters. 

    For the converse, suppose we assume \textsc{nim}-periodicity in $(0,-b)$ and a horizontal period of $p$.  Then we obtain periodicity in $(p,-2b)$.  In other words, we obtain a kind of diagonal periodicity, but not necessarily in the direction specified by the sum of the moves $(x_1,y_1)$ and $(x_2,y_2)$.  
\end{rem}

\section{The horizontal period of $L(b;x_1,0;x_2,y_2)$}\label{section - $L(b;x_1,0;x_2,y_2)$}

\subsection{A few more sporadic cases}


\begin{prop}\label{prop - sporadic2}
    Suppose $x_1$ is odd and $x_2=x_1\pm 1$. Then the horizontal period of $L(1;x_1,0;x_2,1)$ is $2$.
\end{prop}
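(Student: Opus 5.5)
The plan is to reduce everything to a one-dimensional recursion and then show that the recursion eventually locks into strict alternation of $0$s and $1$s. Since $b=1$, \textsc{nim}-periodicity gives $\mathcal{SG}(x,1)=\mathcal{SG}(x,0)\oplus 1$, and the whole array is determined by row $0$. Write $a(x)=\mathcal{SG}(x,0)$. The options of $(x,0)$ are $(x-x_1,0)$ and $(x-x_2,1)$, so
\[a(x)=\mex\{a(x-x_1),\ a(x-x_2)\oplus 1\},\]
where a term is omitted when its argument is negative. It therefore suffices to show that $a$ is eventually periodic with period $2$.

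\emph{Step 1 (self-sustaining alternation).} Set $M=\max\{x_1,x_2\}$. Suppose that on some window of $M$ consecutive integers $x$ we have $a(x)\equiv x+c \pmod 2$ with $a(x)\in\{0,1\}$. Since $x_1$ is odd and $x_2=x_1\pm1$ is even,
\[a(x-x_1)=(x+c+1)\bmod 2=a(x-x_2)\oplus 1.\]
Both options therefore share the single value $(x+c+1)\bmod 2$, and the mex is $(x+c)\bmod 2$. By induction the pattern then persists forever, so the horizontal period is $2$. (It cannot be $1$, because the values strictly alternate.) In the whole array this reads $\mathcal{SG}(x,y)=(x+y+c)\bmod 2$ eventually, with no $2$s or $3$s.

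\emph{Step 2 (reaching such a window).} This is the main obstacle: we must show that a window of $M$ consecutive alternating $0/1$ values actually occurs. Small cases such as $x_1=3,x_2=2$ (values $0,0,0,2,2,1,0,1,0,1,\dots$) and $x_1=1,x_2=2$ suggest it happens after a short preperiod. My first approach is a direct computation of the initial segment.
\begin{itemize}
\item For $x<\min\{x_1,x_2\}$ we have $a(x)=0$.
\item On the next stretch only one option exists, so $a$ is forced: it alternates in blocks determined by the smaller move.
\item One then tracks the first $2$-block given by Lemma~\ref{lemma: ac rule}, treating the cases $x_2=x_1+1$ and $x_2=x_1-1$ separately and following residues of $x$ modulo $x_1+x_2$.
\end{itemize}
The aim is to show that after the first one or two $2$-blocks, every $2$ lies in a position whose diagonal successor, in the direction $(x_1+x_2,-1)$, has options $0$ and $0\oplus1$ coinciding in value. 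Such a position cannot again be a $2$.

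\emph{Step 3 (backup route).} If the direct computation becomes unwieldy, I would fall back on Section~\ref{section - eventual diagonal periodicity}. Here $x_1\neq x_2$ and $y_1+y_2=1\neq 2b$, so Proposition~\ref{prop: bad pairs eventually disappear} shows that bad pairs of $2$s eventually disappear. Lemma~\ref{lem-no bad pairs of twos implies diagonal periodicity} then gives eventual diagonal periodicity in the direction $(x_1+x_2,-1)$. Since $x_1+x_2$ is odd and the vertical shift is $1$, this combines with \textsc{nim}-periodicity. The remaining task is to exclude any surviving $2$. By Lemma~\ref{bad pair of 2s less than or equal to}, a surviving $2$ propagates backward along the diagonal. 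The parity constraint from Step 1 (options at odd and even horizontal distances) should force the two options of a late $2$ to be equal, which is a contradiction. Once all $2$s are gone, Step 1 applies and yields horizontal period $2$.
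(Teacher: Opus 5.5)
You have a genuine gap: Step~1 is correct, but the proposal never shows that the alternating window it needs is actually reached. Step~1 itself is a tidy way to finish. Once $\max\{x_1,x_2\}$ consecutive entries of row $0$ alternate in $\{0,1\}$, the parities of $x_1$ (odd) and $x_2$ (even) make the two options of $(x,0)$ coincide, so alternation propagates forever. The paper uses this fact implicitly in its last sentence.

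Reaching that window is the whole content of the proposition, and Step~2 is a plan rather than an argument. Its stated aim is also false for large $x_1$: the $2$s do not die out after one or two $2$-blocks. For $L(1;7,0;6,1)$, row $0$ reads $0000000\,222222 \mid 1000001\,022221 \mid 0100010\,102210 \mid 1010101\,010101$. The run of $2$s loses exactly one column at each end per block, so it survives for $(x_1-1)/2$ blocks, which is unbounded in $x_1$. No finite inspection covers all $x_1$. What is missing is an induction on the block index $k$ that gives block $k$ from block $k-1$, and this is exactly what the paper's proof supplies. For $x_2=x_1-1$ the rule is as follows. In the first $x_1$ columns, the initial $k$ and final $k$ columns flip. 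In the last $x_2$ columns, the initial and final $k-1$ columns flip, and the two outermost surviving $2$s become $0$ and $1$ in row $0$. Hence the $\frac{x_1+1}{2}$th block alternates, and only then does your Step~1 take over.

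Your backup Step~3 does not close the gap either. Proposition~\ref{prop: bad pairs eventually disappear} and Lemma~\ref{lem-no bad pairs of twos implies diagonal periodicity} do give eventual diagonal periodicity. Combined with \textsc{nim}-periodicity, however, this only yields a horizontal period dividing $2(x_1+x_2)$, as in Theorem~\ref{conjecture - big one}; it cannot rule out $2$s that persist forever. For example, $L(1;2,0;1,1)$ has the same shape, with $x_2=x_1-1$ but $x_1$ even. It has no bad pairs of $2$s and is diagonally periodic, yet row $0$ is $0,0,2,1,1,2,0,0,2,1,1,2,\ldots$, with period $6$ by Theorem~\ref{Main Theorem 1}(c). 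So the step that removes the last $2$s must use the oddness of $x_1$ in a non-circular way. Your claim that the parity constraint of Step~1 ``should force'' the two options of a late $2$ to be equal is circular. That constraint holds only where the neighbouring entries already alternate, which is the conclusion you want.

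One workable setup is to put $e(x)=a(x)\oplus(x \bmod 2)$ on the $0/1$ entries. Then $a(x)=2$ exactly when $a(x-x_1),a(x-x_2)\in\{0,1\}$ and $e(x-x_1)\neq e(x-x_2)$. So $2$s are born at boundaries between the two alternation phases, and you must prove that these boundaries annihilate. That is what the paper's block-by-block rule tracks.
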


\begin{figure}[h]
    
    $\begin{array}{>{\columncolor{yellow!20}}c>{\columncolor{yellow!20}}c>{\columncolor{yellow!20}}c>{\columncolor{yellow!20}}c>{\columncolor{yellow!20}}c>{\columncolor{yellow!20}}c>{\columncolor{yellow!20}}c
    >{\columncolor{blue!20}}c
    >{\columncolor{blue!20}}c
    >{\columncolor{blue!20}}c
    >{\columncolor{blue!20}}c
    >{\columncolor{blue!20}}c
    >{\columncolor{blue!20}}c
    |
    >{\columncolor{yellow!20}}c>{\columncolor{yellow!20}}c>{\columncolor{yellow!20}}c>{\columncolor{yellow!20}}c>{\columncolor{yellow!20}}c>{\columncolor{yellow!20}}c>{\columncolor{yellow!20}}c
    >{\columncolor{blue!20}}c
    >{\columncolor{blue!20}}c
    >{\columncolor{blue!20}}c
    >{\columncolor{blue!20}}c
    >{\columncolor{blue!20}}c
    >{\columncolor{blue!20}}c
    |}
         0& 0& 0& 0& 0& 0& 0& 2& 2& 2& 2& 2& 2& 1& 0& 0& 0& 0& 0& 1& 0& 2& 2& 2& 2& 1 \\
          1& 1& 1& 1& 1& 1& 1& 3& 3& 3& 3& 3& 3& 0& 1& 1& 1& 1& 1& 0& 1& 3& 3& 3& 3& 0 
\end{array}$\medskip

         $\begin{array}{>{\columncolor{yellow!20}}c>{\columncolor{yellow!20}}c>{\columncolor{yellow!20}}c>{\columncolor{yellow!20}}c>{\columncolor{yellow!20}}c>{\columncolor{yellow!20}}c>{\columncolor{yellow!20}}c
    >{\columncolor{blue!20}}c
    >{\columncolor{blue!20}}c
    >{\columncolor{blue!20}}c
    >{\columncolor{blue!20}}c
    >{\columncolor{blue!20}}c
    >{\columncolor{blue!20}}c
    |
    >{\columncolor{yellow!20}}c>{\columncolor{yellow!20}}c>{\columncolor{yellow!20}}c>{\columncolor{yellow!20}}c>{\columncolor{yellow!20}}c>{\columncolor{yellow!20}}c>{\columncolor{yellow!20}}c
    >{\columncolor{blue!20}}c
    >{\columncolor{blue!20}}c
    >{\columncolor{blue!20}}c
    >{\columncolor{blue!20}}c
    >{\columncolor{blue!20}}c
    >{\columncolor{blue!20}}c
    }
         0& 1& 0& 0& 0& 1& 0& 1& 0& 2& 2& 
         1& 0& 1&0 & 1&0 &1&0& 1&0& 1&0& 1&0& 1\\
         1& 0& 1& 1& 1& 0& 1& 0& 1& 3& 3& 0& 1& 0& 1&0 & 1&0 &1&0& 1&0& 1&0& 1&0
    \end{array}$ 

\bigskip\bigskip

    $\begin{array}{>{\columncolor{yellow!20}}c>{\columncolor{yellow!20}}c>{\columncolor{yellow!20}}c>{\columncolor{yellow!20}}c>{\columncolor{yellow!20}}c>{\columncolor{yellow!20}}c>{\columncolor{yellow!20}}c>{\columncolor{yellow!20}}c
    >{\columncolor{blue!20}}c
    >{\columncolor{blue!20}}c
    >{\columncolor{blue!20}}c
    >{\columncolor{blue!20}}c
    >{\columncolor{blue!20}}c
    >{\columncolor{blue!20}}c
    >{\columncolor{blue!20}}c
    |
    >{\columncolor{yellow!20}}c>{\columncolor{yellow!20}}c>{\columncolor{yellow!20}}c>{\columncolor{yellow!20}}c>{\columncolor{yellow!20}}c>{\columncolor{yellow!20}}c>{\columncolor{yellow!20}}c>{\columncolor{yellow!20}}c
    >{\columncolor{blue!20}}c
    >{\columncolor{blue!20}}c
    >{\columncolor{blue!20}}c
    >{\columncolor{blue!20}}c
    >{\columncolor{blue!20}}c
    >{\columncolor{blue!20}}c
    >{\columncolor{blue!20}}c
    |}
         0& 0& 0& 0& 0& 0& 0& 1& 2& 2& 2& 2& 2& 2& 0& 1& 0& 0& 0& 0& 0& 1& 0& 1& 2& 2& 2& 2& 0& 1 \\
         1& 1& 1& 1& 1& 1& 1& 0& 3& 3& 3& 3& 3& 3& 1& 0& 1& 1& 1& 1& 1& 0& 1& 0& 3& 3& 3& 3& 1& 0 
\end{array}$\medskip

         $\begin{array}{>{\columncolor{yellow!20}}c>{\columncolor{yellow!20}}c>{\columncolor{yellow!20}}c>{\columncolor{yellow!20}}c>{\columncolor{yellow!20}}c>{\columncolor{yellow!20}}c>{\columncolor{yellow!20}}c>{\columncolor{yellow!20}}c
    >{\columncolor{blue!20}}c
    >{\columncolor{blue!20}}c
    >{\columncolor{blue!20}}c
    >{\columncolor{blue!20}}c
    >{\columncolor{blue!20}}c
    >{\columncolor{blue!20}}c
    >{\columncolor{blue!20}}c
    |
    >{\columncolor{yellow!20}}c>{\columncolor{yellow!20}}c>{\columncolor{yellow!20}}c>{\columncolor{yellow!20}}c>{\columncolor{yellow!20}}c>{\columncolor{yellow!20}}c>{\columncolor{yellow!20}}c>{\columncolor{yellow!20}}c
    >{\columncolor{blue!20}}c
    >{\columncolor{blue!20}}c
    >{\columncolor{blue!20}}c
    >{\columncolor{blue!20}}c
    >{\columncolor{blue!20}}c
    >{\columncolor{blue!20}}c
    >{\columncolor{blue!20}}c
    }
         0& 1& 0& 0& 0& 1& 0& 1& 0& 1& 2& 2& 0& 1& 0& 1& 0& 1& 0& 1& 0& 1& 0& 1& 0& 1& 0& 1& 0& 1\\
         1& 0& 1& 1& 1& 0& 1& 0& 1& 0& 3& 3& 1& 0& 1& 0& 1& 0& 1& 0& 1& 0& 1& 0& 1& 0& 1& 0& 1& 0
    \end{array}$



    \caption{The first $2(x_1+x_2)$ columns for $L(1;7,0;6,1)$ and  $L(1;7,0;8,1)$}
    \label{sproadic cases of main theorem 2 with b=1}
\end{figure}

\begin{proof}
Figure \ref{sproadic cases of main theorem 2 with b=1} shows the first $4(7+6)$ and $4(8+7)$ columns for $L(1;7,0;6,1)$ and  $L(1;7,0;8,1)$, respectively. We will discuss the former case ($x_2=x_1-1$), and leave the latter to the reader.

The first $x_1$ columns are all $(0,1)$, and the next $x_2$ columns are all $(2,3)$.  

Consider the $k^{th}$ $(x_1+x_2)$-block after the first one.  We describe how it is generated by the previous block.  In the first $x_1$ columns, the initial $k$ and final $k$ columns are flipped.  In the last $x_2$ columns,  the initial $k-1$ and final $k-1$ columns are flipped, while the $k^{th}$  and $(x_1+x_2-k)^{th}$ columns become $(0,1)$ and $(1,0)$, respectively.

Thus the $\frac{x_1+1}{2}^{th}$ 
$(x_1+x_2)$-block consists of alternating sequences of $0$s and $1$s. 
Hence the eventual horizontal period is $2$.
\end{proof}

\begin{prop}\label{prop - sporadic3}
    The following games have the following horizontal preperiods and periods:

    \begin{center}
\begin{tabular}{c|l|c|l}
   \; &\; &preperiod& period\\
    \hline
    i&$L(2; 2, 0; 3, 1)$& $10$ & $4$\\\hline
    ii&$L(2; 2, 0; 3, 3)$ & $10$ & $4$ \\\hline
    iii&$L(3; 3, 0; 2, 1)$ & $5$ & $6$ \\\hline
    iv&$L(3; 3, 0; 2, 5)$ & $5$ & $6$     \\\hline
\end{tabular}
\end{center}

       
\end{prop}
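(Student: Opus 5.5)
These four games are fixed, concrete instances, so I would prove the statement by finite computation, certified by the structural results already established. The certification follows the paper's Remark on computer verification, adapted to allow a preperiod. By \textsc{nim}-periodicity, every column is determined by its first $b$ entries: the remaining $b$ entries of the vertical period are obtained by $\oplus 1$. Each column depends only on the previous $M=\max\{x_1,x_2\}$ columns, where $M=3$ in all four cases. Hence if two windows of $M$ consecutive columns agree, then periodicity holds from that point on. This is exactly the pigeonhole argument of Lemma \ref{lem - eventual horizontal periodicity}.

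\textbf{Step 1: compute.} For each game I would compute the first $b$ rows of the \textsc{nim}-array, column by column. This gives a $2\times(\text{about }20)$ or $3\times(\text{about }20)$ table of values from $\{0,1,2\}$. By Corollary \ref{corollary: 2s vs 3s}, only $0,1,2$ can occur in these rows. The computation uses the rule that $\mathcal{SG}(x,y)$ is the $\mex$ of the values at $(x-x_1,y)$ and at $(x-x_2,y+y_2)$, where the latter is reduced via $\mathcal{SG}(x',y'+b)=\mathcal{SG}(x',y')\oplus1$. Only the entries that exist are included.

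\textbf{Step 2: locate the period.} In the computed table I would check that columns $p_0,\dots,p_0+M-1$ coincide with columns $p_0+p,\dots,p_0+p+M-1$, where $p_0$ is the claimed preperiod ($10$ or $5$) and $p$ is the claimed period ($4$ or $6$). By the determinism observation, this single agreement of windows gives periodicity with period $p$ for all $x\ge p_0$.

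\textbf{Step 3: minimality.} I would then confirm that neither $p_0$ nor $p$ can be taken smaller. For the preperiod, I would exhibit a column $x<p_0$ with $x$ differing from $x+p$; it is enough to check the column $p_0-1$ against the column $p_0-1+p$. For the period, I would check that no proper divisor $d$ of $p$ works beyond the preperiod.

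As a sanity check on the period, I would use Theorem \ref{conjecture - big one}. For (i), $g=2\cdot2\cdot5/\gcd\{4,1\}=20$, and $4\mid 20$. For (iii), $g=2\cdot3\cdot5/\gcd\{6,1\}=30$, and $6\mid30$. Cases (ii) and (iv) give the same values of $g$. I also expect the pairings to be structural rather than coincidental: (ii) is related to (i), and (iv) to (iii), via the reflection $y\mapsto$ complementary residue. This is because $3\equiv -1$ and $5\equiv-1 \pmod{2b}$, so the symmetry $y_2\mapsto 2b-y_2$ combined with \textsc{nim}-periodicity should give matching behaviour. However, I would simply verify each case directly rather than rely on this.

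The main obstacle is only bookkeeping. First, the preperiod must be confirmed exactly, with care over the off-by-one convention for where the period starts. Second, the early columns are irregular, since in them the options $(x-x_2,\cdot)$ do not yet exist. Both are handled by explicitly displaying the computed table of the first $p_0+p+M$ columns, as the paper does in its figures.
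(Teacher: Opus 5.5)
Your proposal is correct and matches the paper, whose proof simply displays the computed table in Figure~\ref{sproadic cases of main theorem 2} with the preperiod highlighted. Your two checks are what make that finite display a complete proof: agreement of a window of $M=\max\{x_1,x_2\}$ consecutive columns (checking the first $b$ rows suffices by \textsc{nim}-periodicity) forces periodicity from that point on, and comparing column $p_0-1$ with column $p_0-1+p$, together with testing the proper divisors of $p$, pins down exactness. Your reflection heuristic is also borne out: the arrays for (ii) and (iv) are those of (i) and (iii) with the first $b$ rows reversed, as the symmetry $y_2\mapsto 2b-y_2$ predicts when $y_1=0$.
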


\begin{figure}[h]
 $\begin{array}{>{\columncolor{yellow!20}}c>{\columncolor{yellow!20}}c>{\columncolor{yellow!20}}c>{\columncolor{yellow!20}}c>{\columncolor{yellow!20}}c>{\columncolor{yellow!20}}c>{\columncolor{yellow!20}}c>{\columncolor{yellow!20}}c>{\columncolor{yellow!20}}c>{\columncolor{yellow!20}}ccccc}
         0&0&1&1&2&0&0&1&2&0&1&1&0&0\\ 0&0&1&2&0&1&1&0&0&2&1&0&0&1\\
         1&1&0&0&3&1&1&0&3&1&0&0&1&1\\ 1&1&0&3&1&0&0&1&1&3&0&1&1&0   
    \end{array}$\qquad
    $\begin{array}{>{\columncolor{yellow!20}}c>{\columncolor{yellow!20}}c>{\columncolor{yellow!20}}c>{\columncolor{yellow!20}}c>{\columncolor{yellow!20}}c>{\columncolor{yellow!20}}c>{\columncolor{yellow!20}}c>{\columncolor{yellow!20}}c>{\columncolor{yellow!20}}c>{\columncolor{yellow!20}}ccccc}
         0&0&1&2&0&1&1&0&0&2&1&0&0&1\\ 0&0&1&1&2&0&0&1&2&0&1&1&0&0\\
         1&1&0&3&1&0&0&1&1&3&0&1&1&0\\ 1&1&0&0&3&1&1&0&3&1&0&0&1&1   
    \end{array}$

\medskip

$\begin{array}{>{\columncolor{yellow!20}}c>{\columncolor{yellow!20}}c>{\columncolor{yellow!20}}c>{\columncolor{yellow!20}}c>{\columncolor{yellow!20}}ccccccc}
         0&0&1&1&2&0&0&1&1&1&0\\ 0&0&1&1&1&0&0&0&1&1&1\\ 0&0&0&2&1&1&0&0&0&1&1\\
         1&1&0&0&3&1&1&0&0&0&1\\ 1&1&0&0&0&1&1&1&0&0&0\\ 1&1&1&3&0&0&1&1&1&0&0   
    \end{array}$\qquad
    $\begin{array}{>{\columncolor{yellow!20}}c>{\columncolor{yellow!20}}c>{\columncolor{yellow!20}}c>{\columncolor{yellow!20}}c>{\columncolor{yellow!20}}ccccccc}
         0&0&0&2&1&1&0&0&0&1&1\\ 0&0&1&1&1&0&0&0&1&1&1\\ 0&0&1&1&2&0&0&1&1&1&0\\
         1&1&1&3&0&0&1&1&1&0&0\\ 1&1&0&0&0&1&1&1&0&0&0\\ 1&1&0&0&3&1&1&0&0&0&1   
    \end{array}$
    \caption{The preperiods and initial periods for $L(2;2,0;3,1)$, $L(2;2,0;3,3)$, $L(3;3,0;2,1)$, and $L(3;3,0;2,5)$}
    \label{sproadic cases of main theorem 2}
\end{figure}

\begin{proof}
    See Figure \ref{sproadic cases of main theorem 2}.  In each case, the preperiod is highlighted, and one period follows.  
\end{proof}

\begin{rem}These sporadic cases arise from a sort of ``eventual'' vector elimination, where, after the preperiod, the \textsc{nim}-values line up so that $(x-x_1,y+y_1)$ and $(x-x_2,y+y_2)$ always have the same values.
These cases were found via computer models, and we currently have no idea how to predict this phenomenon systematically.  In fact, our inability to show when this does \emph{not} happen is our main roadblock to proving Conjecture \ref{conjecture - main2}.
\end{rem}  


\subsection{The main conjecture}

Our computer aided calculations show the following up to $b,x_1,x_2,y_2\leq 20$.  For our Maple code, see the last author's website \cite{Thotsaporn}.

\begin{conj}\label{conjecture - main2}
    Suppose $0<y_2<2b$, $\gcd\{x_1,x_2\}=1$, and  $\gcd\{b,y_2\}=1$.  Suppose further that none of Corollary \ref{cor - vector elimination} or Propositions \ref{prop - Sporadic1}(i),\ref{prop - Sporadic1}(iii), \ref{prop - the trivial case generalized}, \ref{prop - sporadic2}, or \ref{prop - sporadic3} apply.     
    Then the  horizontal period of $L(b;x_1,0;x_2,y_2)$ is given by
    \[g(b,x_1,0,x_2,y_2) =\frac{2b(x_1+x_2)}{\gcd\{2b,y_2\}}.\]
\end{conj}




In the general case of $L(b;x_1,y_1;x_2,y_2)$, we have the exact same phenomenon.
However, it is more complicated to determine when we have eventual vector elimination, so we do not make a more specific conjecture in this paper.

\section{Lengyel's other conjecture: multiple transfer options}\label{section - lengyel's other conjecture}
Lengyel had one other conjecture in \cite{Lengyel}, and we prove it now.  

\begin{defi}
Define the \emph{Lengyel multitransfer game} $L^*(a,b,c)$ to be the vector game defined by 
\[\{(-a,0),(0,-b),(-1,1), (-2,2), \ldots, (-c,c)\}\]     
\end{defi}

Note that we still have \textsc{nim}-periodicity in $(0,-b)$.

\begin{prop}
    For $b\geq 2$, the horizontal component of the period of $L^*(b,b,b)$ is $b+2$.
\end{prop}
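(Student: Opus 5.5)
We would argue from a window of consecutive columns, using \textsc{nim}-periodicity throughout. The game $L^*(b,b,b)$ is \textsc{nim}-periodic in $(0,-b)$: the Generalized \textsc{Nim}-Periodicity lemma applies, since $f(x,y)=(x,y-b)$ commutes with every move and does not affect whether the moves $(-a,0)$ or $(-k,k)$ are legal. So $\mathcal{SG}(x,y+b)=\mathcal{SG}(x,y)\oplus 1$, and every column is determined by its first $b$ entries. For $y<b$, the options of $(x,y)$ are $(x-k,y+k)$ for $1\le k\le \min\{x,b\}$, together with $(x-b,y)$ when $x\ge b$. All of these lie in the previous $b$ columns and in rows $<2b$. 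Hence, exactly as in the proof of Lemma \ref{lem - eventual horizontal periodicity}, the first $2b$ rows of column $x$ are a function of the first $2b$ rows of columns $x-b,\ldots,x-1$. Consequently, if we find $x_0$ such that columns $x_0,\ldots,x_0+b-1$ agree with columns $x_0+b+2,\ldots,x_0+2b+1$, a trivial induction on $x$ gives $\mathcal{SG}(x,y)=\mathcal{SG}(x+b+2,y)$ for all $x\ge x_0$. The proof therefore reduces to an explicit description, uniform in $b$, of an initial segment of the array.

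First, we would compute the first roughly $3b+4$ columns in rows $0,\ldots,b-1$ for several values of $b$ and guess a closed form. The start is easy. Column $0$ is $b$ zeros. Column $1$ is $1,\ldots,1,0$, because the only option is $(0,y+1)$, whose value jumps to $1$ at $y=b-1$. In general, for $x\le b-1$ the position $(x,y)$ has options along the antidiagonal through it, and reading the values off this antidiagonal should give a staircase pattern indexed by $x+y$ relative to multiples of $b$. Columns $b$ and $b+1$ are where the move $(-b,0)$ first enters, and we expect them to contain the only $2$s and $3$s, which then recur every $b+2$ columns. We would state the candidate pattern as a formula for $\mathcal{SG}(x,y)$, $0\le y<b$, in terms of $x \bmod (b+2)$ and $x+y \bmod b$ (or similar). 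We would then verify by induction on $x$ that it satisfies the mex recursion. Each verification is a finite case analysis: for each position, list the values of its at most $b+1$ options and check which small values are missed. Here \textsc{nim}-periodicity lets us replace options in rows $b,\ldots,2b-1$ by $\oplus1$ of rows $0,\ldots,b-1$.

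Second, once the formula is established on a window that contains $b$ consecutive columns together with their translates by $b+2$, the reduction above gives periodicity in $(b+2,0)$ from that point on. If the formula holds from column $0$, there is no preperiod. Finally, we would rule out any proper divisor $d$ of $b+2$ as a period. It suffices to exhibit, from the explicit formula, a column whose translate by $d$ differs. The natural witness is the position of the $2$-columns: they should occur only at $x\equiv b, b+1 \pmod{b+2}$, which is incompatible with any smaller period when $b\ge 2$.

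The main obstacle is the first step: finding and proving a uniform closed form for the columns near $x=b, b+1$ and just after, where the move $(-b,0)$ and the long transfer moves $(-k,k)$ with $k$ close to $b$ interact. The computations for small $b$ should suggest the pattern. The risk is that boundary rows ($y$ near $b-1$, where options wrap into the $\oplus1$ region) behave differently for small $b$, such as $b=2,3$. We would handle those by direct computation, in the spirit of the Remark after Theorem \ref{Main Theorem 1}, and give the general argument for $b\ge 4$.
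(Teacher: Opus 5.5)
Your reductions are sound: $L^*(b,b,b)$ is \textsc{nim}-periodic in $(0,-b)$, rows $0,\ldots,b-1$ determine everything, and a candidate formula can be checked against the mex recursion. The gap is that the proposal never produces that formula, which is the entire content of the proof. Moreover, the structure you predict for it is wrong, so your minimality step fails as written. You are importing the picture from the three-move games $L(b;x_1,y_1;x_2,y_2)$, whose \textsc{nim}-values are at most $3$. Here a position has up to $b+2$ options, and values as large as $\max\{b+1,(b+1)\oplus 1\}$ occur. In rows $0,\ldots,b-1$, column $2$ is all $2$s and column $3$ reads $3,\ldots,3,2$. So for $b\ge 3$ there are $2$-columns at $x\not\equiv b,b+1 \pmod{b+2}$, and the witness you propose does not exist. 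Nor is there a staircase in $x+y$: within each block of $b$ rows, a column is constant except possibly in the block's last row.

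The pattern is exactly what the paper's arrays for $b=6,7$ display. With $r=x\bmod(b+2)$, we have $\mathcal{SG}(x,y)=r$ for $0\le y\le b-2$ and $\mathcal{SG}(x,b-1)=2\lfloor r/2\rfloor$, extended to all rows by \textsc{nim}-periodicity. This is visibly $(b+2)$-periodic from $x=0$, so the window argument is unnecessary. Minimality is immediate because row $0$ reads $0,1,\ldots,b+1$.

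The induction is uniform in $b\ge 2$, with no special small cases. Put $r_k=(x-k)\bmod(b+2)$, so that $r_1,\ldots,r_b$ are all residues except $r$ and $(r+1)\bmod(b+2)$. For $y\le b-2$, the option $(x-k,y+k)$ has value $r_k$, $2\lfloor r_k/2\rfloor$, or $r_k\oplus 1$ according as $y+k\le b-2$, $y+k=b-1$, or $y+k\ge b$. The option $(x-b,y)$ has value $r_b$. Grouping values into pairs $\{2i,2i+1\}$ shows that every $j<r$ occurs and $r$ does not. For $y=b-1$, the option values are $r_k\oplus 1$ for $k<b$, together with $2\lfloor r_b/2\rfloor$ and $2\lfloor r_b/2\rfloor+1$. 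The hypothesis $b\ge 2$ is exactly what puts $r_b$ in a different pair from $r$, giving mex $2\lfloor r/2\rfloor$. The move $(-b,0)$ is essential only in columns with $r=b+1$, and for $x<b$ the same argument runs with the moves $k\le x$.

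The paper itself only exhibits the arrays for $b=6,7$ and asserts the remaining cases are similar. With this formula inserted, your plan would give a more complete proof than the paper's; without it, what you have is a strategy, not a proof.
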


Notice that $L^*(1,1,1)=L(1;1,0;1,1)$. 

\begin{proof}
    We give a proof by diagram in Figure \ref{fig: Multioption} for the cases $b=6$ and $b=7$.  Note that in each case, there is no preperiod.  The remaining cases are similar.
\end{proof}

\begin{figure}[h]
    \centering
        $\begin{tblr}{cccccccc}
         0 & 1 & 2 & 3 & 4 & 5 & 6 & 7  \\
         0 & 1 & 2 & 3 & 4 & 5 & 6 & 7  \\
         0 & 1 & 2 & 3 & 4 & 5 & 6 & 7  \\
         0 & 1 & 2 & 3 & 4 & 5 & 6 & 7  \\
         0 & 1 & 2 & 3 & 4 & 5 & 6 & 7  \\
         0 & 0 & 2 & 2 & 4 & 4 & 6 & 6  \\
         \hline[dashed]
         1 & 0 & 3 & 2 & 5 & 4 & 7 & 6 \\
         1 & 0 & 3 & 2 & 5 & 4 & 7 & 6 \\
         1 & 0 & 3 & 2 & 5 & 4 & 7 & 6 \\
         1 & 0 & 3 & 2 & 5 & 4 & 7 & 6 \\
         1 & 0 & 3 & 2 & 5 & 4 & 7 & 6 \\
         1 & 1 & 3 & 3 & 5 & 5 & 7 & 7 
    \end{tblr}$
\qquad
    $\begin{tblr}{ccccccccc}
         0 & 1 & 2 & 3 & 4 & 5 & 6 & 7 & 8 \\
         0 & 1 & 2 & 3 & 4 & 5 & 6 & 7 & 8 \\
         0 & 1 & 2 & 3 & 4 & 5 & 6 & 7 & 8 \\
         0 & 1 & 2 & 3 & 4 & 5 & 6 & 7 & 8 \\
         0 & 1 & 2 & 3 & 4 & 5 & 6 & 7 & 8 \\
         0 & 1 & 2 & 3 & 4 & 5 & 6 & 7 & 8 \\
         0 & 0 & 2 & 2 & 4 & 4 & 6 & 6 & 8 \\
         \hline[dashed]
         1 & 0 & 3 & 2 & 5 & 4 & 7 & 6 & 9\\
         1 & 0 & 3 & 2 & 5 & 4 & 7 & 6 & 9\\
         1 & 0 & 3 & 2 & 5 & 4 & 7 & 6 & 9\\
         1 & 0 & 3 & 2 & 5 & 4 & 7 & 6 & 9\\
         1 & 0 & 3 & 2 & 5 & 4 & 7 & 6 & 9\\
         1 & 0 & 3 & 2 & 5 & 4 & 7 & 6 & 9\\
         1 & 1 & 3 & 3 & 5 & 5 & 7 & 7 & 9
    \end{tblr}$
    \caption{A period of the \textsc{nim}-values for $L^*(6,6,6)$ and $L^*(7,7,7)$}
    \label{fig: Multioption}
\end{figure}

\section{Other Directions.  Open Questions.}\label{section - open questions}

While we are happy that Theorem \ref{Main Theorem 1} is a concrete generalization of \cite{Lengyel} and how Theorem \ref{conjecture - big one} applies to the most general case, we are frustrated at how close the proof to Conjecture \ref{conjecture - main2} feels.  Nevertheless, we feel that we have proved almost all of it, and laid the groundwork for someone else to complete the job.  
\begin{question}
    Exactly how does the `eventual vector elimination' work in Conjecture \ref{conjecture - main2}?
\end{question}
Perhaps they could also answer (and prove?) the following.
\begin{question}
What is the correct conjecture for $L(b;x_1,y_1;x_2,y_2)$ when $y_1>0$?    
\end{question}

Also, section \ref{section - lengyel's other conjecture} provides fertile ground for exploration.  For example:
\begin{question}
What is the horizontal period for $L^*(a,b,c)$ in general?    
\end{question}

If one relaxes the ciliary muscles as if to view an autostereogram, one sees that this paper is about the interplay between different notions of periodicity: horizontal periodicity, \textsc{nim}-periodicity, two-move periodicity, and diagonal periodicity.  We end with three examples which invite the reader to have as much fun with periodicity as we have had.

\subsection*{Adding transfer options}
The vector game $\{(0,-3),(-2,0),(-1,3),(-2,2),(-4,1)\}$ is like a Lengyel transfer game with two extra transfer options.  Thus it also has \textsc{nim}-periodicity and eventual horizontal periodicity.  In fact, the (horizontal) preperiod and period are $14$ and $15$, respectively. Figure \ref{fig: (0,-3),(-2,0),(-1,3),(-2,2),(-4,1)} shows the preperiod and one period, as highlighted in the figure.
\begin{figure}[h]
    \centering
    $\begin{tblr}{cccccccccccccc|ccccccccccccccc}
         0&0&2&1&1&3&3&1&4&0&0&2&1&3&\SetCell{lightgray}0&\SetCell{lightgray}0&\SetCell{lightgray}2&\SetCell{lightgray}1&\SetCell{lightgray}1&\SetCell{yellow}2&\SetCell{yellow}0&\SetCell{yellow}3&\SetCell{yellow}1&\SetCell{yellow}4&\SetCell{pink}0&\SetCell{pink}0&\SetCell{pink}3&\SetCell{pink}1&\SetCell{pink}2\\
         0&0&2&2&1&1&3&0&4&1&1&2&0&0&2&1&3&0&2&\SetCell{cyan}1&\SetCell{cyan}1&\SetCell{cyan}2&\SetCell{cyan}0&\SetCell{cyan}3&\SetCell{green}1&\SetCell{green}1&\SetCell{green}2&\SetCell{green}0&\SetCell{green}0\\
         0&0&2&2&0&0&2&1&1&3&0&2&1&4&\SetCell{pink}0&\SetCell{pink}0&\SetCell{pink}2&\SetCell{pink}1&\SetCell{pink}3&\SetCell{lightgray}0&\SetCell{lightgray}0&\SetCell{lightgray}2&\SetCell{lightgray}1&\SetCell{lightgray}1&\SetCell{yellow}2&\SetCell{yellow}0&\SetCell{yellow}3&\SetCell{yellow}1&\SetCell{yellow}2\\
         1&1&3&0&0&2&2&0&5&1&1&3&0&2&\SetCell{green}1&\SetCell{green}1&\SetCell{green}3&\SetCell{green}0&\SetCell{green}0&3&1&2&0&5&\SetCell{cyan}1&\SetCell{cyan}1&\SetCell{cyan}2&\SetCell{cyan}0&\SetCell{cyan}3\\
         1&1&3&3&0&0&2&1&5&0&0&3&1&1&\SetCell{yellow}3&\SetCell{yellow}0&\SetCell{yellow}2&\SetCell{yellow}1&\SetCell{yellow}3&\SetCell{pink}0&\SetCell{pink}0&\SetCell{pink}3&\SetCell{pink}1&\SetCell{pink}2&\SetCell{lightgray}0&\SetCell{lightgray}0&\SetCell{lightgray}3&\SetCell{lightgray}1&\SetCell{lightgray}1\\
         1&1&3&3&1&1&3&0&0&2&1&3&0&5&\SetCell{cyan}1&\SetCell{cyan}1&\SetCell{cyan}3&\SetCell{cyan}0&\SetCell{cyan}2&\SetCell{green}1&\SetCell{green}1&\SetCell{green}3&\SetCell{green}0&\SetCell{green}0&3&1&2&0&3
    \end{tblr}$
    \caption{The preperiod and one period of the \textsc{nim}-values of the vector game $\{(0,-3),(-2,0),(-1,3),(-2,2),(-4,1)\}$}
    \label{fig: (0,-3),(-2,0),(-1,3),(-2,2),(-4,1)}
\end{figure}
If we squint, the values in the period seem to be following a sort of diagonal periodicity in the direction $(5,-4)$.  Perhaps we redefine diagonal periodicity using $\mathcal{SG}^*(x,y)=2$ whenever $\mathcal{SG}(x,y)\geq 2$?  But then how do we predict the position of the $4$ and $5$ in the tenth column of the period?  And why is $(5,-4)$ the direction to squint in?
\begin{question}
    How many of the present methods apply when we add extra transfer options to Lengyel transfer games?
\end{question}

\subsection*{Mimicing two-move periodicity} We wouldn't expect the vector game $\{(0,-2),(-1,0),(-3,-2),(-2,2)\}$ to have \textsc{nim}-periodicity, and in fact, it does not.  Nor would we expect it to have two-move periodicity, but it sort of does!  Observe Figure \ref{fig: (0,-2),(-1,0),(-3,-2),(-2,2)}.  We see periodicity in the direction $(5,4)$
\begin{figure}[h]
    \centering
    $\begin{tblr}{cccccccccc}
         \SetCell{yellow}0&\SetCell{pink}1&\SetCell{yellow}0&\SetCell{pink}1&\SetCell{yellow}0&\SetCell{pink}1&\SetCell{yellow}0&\SetCell{pink}1&\SetCell{yellow}0&\SetCell{pink}1\\
         \SetCell{yellow}0&\SetCell{pink}1&\SetCell{yellow}0&\SetCell{pink}1&\SetCell{yellow}0&\SetCell{pink}1&\SetCell{yellow}0&\SetCell{pink}1&\SetCell{yellow}0&\SetCell{pink}1\\
         \SetCell{pink}1&\SetCell{yellow}0&\SetCell{pink}1&2&3&2&3&2&3&2\\
         \SetCell{pink}1&\SetCell{yellow}0&\SetCell{pink}1&2&3&2&3&2&3&2\\
         \SetCell{yellow}0&\SetCell{pink}1&\SetCell{yellow}0&3&2&\SetCell{yellow}0&\SetCell{pink}1&\SetCell{yellow}0&\SetCell{pink}1&\SetCell{yellow}0\\
         \SetCell{yellow}0&\SetCell{pink}1&\SetCell{yellow}0&3&2&\SetCell{yellow}0&\SetCell{pink}1&\SetCell{yellow}0&\SetCell{pink}1&\SetCell{yellow}0\\
         \SetCell{pink}1&\SetCell{yellow}0&\SetCell{pink}1&2&3&\SetCell{pink}1&\SetCell{yellow}0&\SetCell{pink}1&2&3\\
         \SetCell{pink}1&\SetCell{yellow}0&\SetCell{pink}1&2&3&\SetCell{pink}1&\SetCell{yellow}0&\SetCell{pink}1&2&3\\
         \SetCell{yellow}0&\SetCell{pink}1&\SetCell{yellow}0&3&2&\SetCell{yellow}0&\SetCell{pink}1&\SetCell{yellow}0&3&2\\
         \SetCell{yellow}0&\SetCell{pink}1&\SetCell{yellow}0&3&2&\SetCell{yellow}0&\SetCell{pink}1&\SetCell{yellow}0&3&2
    \end{tblr}$
    \caption{Some of the \textsc{nim}-values of the vector game $\{(0,-2),(-1,0),(-3,-2),(-2,2)\}$}
    \label{fig: (0,-2),(-1,0),(-3,-2),(-2,2)}
\end{figure}

\begin{question}
    Why is $(5,4)$ the direction of periodicity of the vector game $\{(0,-2),(-1,0),(-3,-2),(-2,2)\}$?  What is the class of vector games that mimic two-move periodicity in this way?
\end{question}

\subsection*{The most general case}
The vector game $\{(0,-3),(-2,0),(-1,3),(-2,2),(-3,1),(-1,-2)\}$ does not exhibit any kind of periodicity that we can pin down, though there are some hints at regularity.   For example, the first $9$ columns exhibit \textsc{nim}-periodicity. See Figure \ref{fig: (0,-3),(-2,0),(-1,3),(-2,2),(-3,1),(-1,-2)}, where the first failure of \textsc{nim}-periodicity is highlighted.

\begin{figure}[h]
    \centering
    $\begin{tblr}{cccccccccccccccccccccccccccccc}
    0& 0& 2& 1& 1& 3& 3& 1& 1& \SetCell{yellow}2& 0& 0& 2& 2& 1& 1& 3& 0& 1& 2& 0& 0& 1& 2& 0& 1& 2& 0& 1& 1\\
0& 0& 2& 2& 0& 3& 1& 1& 3& 3& 4& 0& 0& 2& 2& 0& 1& 1& 3& 0& 4& 2& 0& 1& 2& 0& 0& 1& 2& 0\\
0& 2& 2& 0& 0& 2& 2& 1& 3& 3& 1& 1& 2& 0& 0& 2& 2& 1& 1& 3& 0& 1& 1& 2& 0& 1& 2& 0& 1& 2 \\
1& 1& 3& 0& 0& 2& 2& 0& 0& \SetCell{yellow}4& 1& 1& 3& 3& 0& 0& 2& 2& 0& 4& 1& 3& 0& 1& 2& 0& 1& 2& 0& 0\\
1& 1& 3& 3& 1& 2& 0& 0& 2& 2& 5& 5& 3& 1& 1& 3& 0& 0& 2& 2& 5& 1& 2& 0& 1& 1& 2& 0& 1& 2\\
1& 3& 3& 1& 1& 3& 3& 0& 2& 2& 0& 0& 5& 2& 1& 1& 3& 0& 0& 1& 2& 0& 0& 1& 2& 0& 1& 2& 0& 1\\
0& 0& 2& 1& 1& 3& 3& 1& 1& 3& 0& 0& 2& 2& 4& 4& 5& 1& 1& 3& 0& 4& 2& 0& 1& 2& 0& 1& 1& 2\\
0& 0& 2& 2& 0& 3& 1& 1& 3& 3& 4& 1& 2& 0& 0& 2& 2& 4& 1& 1& 3& 0& 1& 2& 0& 0& 1& 2& 0& 1\\
0& 2& 2& 0& 0& 2& 2& 1& 3& 3& 1& 1& 3& 3& 0& 0& 2& 2& 4& 0& 1& 1& 3& 0& 4& 2& 0& 1& 2& 0\\
1& 1& 3& 0& 0& 2& 2& 0& 0& 2& 1& 1& 3& 3& 1& 1& 3& 0& 0& 2& 2& 1& 1& 3& 0& 1& 2& 0& 0& 1\\
1& 1& 3& 3& 1& 2& 0& 0& 2& 2& 5& 0& 3& 1& 1& 3& 3& 5& 0& 0& 2& 2& 0& 1& 1& 3& 0& 4& 2& 0\\
1& 3& 3& 1& 1& 3& 3& 0& 2& 2& 0& 0& 2& 2& 1& 3& 3& 1& 1& 3& 0& 0& 2& 2& 1& 1& 3& 0& 1& 1
    \end{tblr}$
    \caption{Some of the \textsc{nim}-values of the vector game $\{(0,-3),(-2,0),(-1,3),(-2,2),(-3,1),(-1,-2)\}$}
    \label{fig: (0,-3),(-2,0),(-1,3),(-2,2),(-3,1),(-1,-2)}
\end{figure}
\begin{question}
    Is there any order in the chaos of general subtraction-transfer games?
\end{question}

\end{document}